\numberwithin{equation}{section}
\theoremstyle{plain}
\newtheorem{thm}{Theorem}[section]
\newtheorem{lem}[thm]{Lemma}
\newtheorem{prop}[thm]{Proposition}
\theoremstyle{definition}
\newtheorem*{defi}{Definition}
\newtheorem*{exe}{Example}
\theoremstyle{remark}
\newcommand{\ie}{\emph{i.e.} }
\newcommand{\fq}{\mathbb F_q}
\newcommand{\kinf}{K_\infty}
\newcommand{\linf}{L_\infty}
\newcommand{\ksinf}{K_{s,\infty}}
\newcommand{\lsinf}{L_{s,\infty}}
\newcommand{\cinf}{\mathbb C_\infty}
\newcommand{\csinf}{\mathbb C_{s,\infty}}
\newcommand{\rls}{R_{L,s}}
\newcommand{\rss}{R_{S,s}}
\newcommand{\lsp}{L_{s,\mathfrak p}}
\newcommand{\lss}{L_{s,S}}
\newcommand{\rsps}{R_{S \cup \{\mathfrak p\},s}}
\newcommand{\m}{\mathfrak m}
\newcommand{\p}{\mathfrak p}
\newcommand{\osp}{\mathcal O_{s,\p}}
\newcommand{\ol}{\mathcal O_L}
\newcommand{\os}{\mathcal O_S}
\newcommand{\calus}{\mathcal U_s}
\DeclareMathOperator{\im}{Im}
\DeclareMathOperator{\fitt}{Fitt}
\DeclareMathOperator{\expe}{exp_E}
\DeclareMathOperator{\expei}{exp_E^{-1}}
\DeclareMathOperator{\expa}{exp_{\alpha}}
\DeclareMathOperator{\expachi}{exp_{\alpha(\chi)}}
\DeclareMathOperator{\expachii}{exp_{\alpha(\chi)}^{-1}}
\DeclareMathOperator{\expcn}{exp_{C^{\otimes n}}}
\DeclareMathOperator{\expcni}{exp_{C^{\otimes n}}^{-1}}
\DeclareMathOperator{\cn}{C^{\otimes n}}
\DeclareMathOperator{\cnt}{C^{\otimes n}_\theta}
\DeclareMathOperator{\logcn}{log_{C^{\otimes n}}}
\DeclareMathOperator{\enm}{End}
\DeclareMathOperator{\lie}{Lie}
\DeclareMathOperator{\gln}{GL_n}
\DeclareMathOperator{\gl}{GL}
\let\hom\relax
\DeclareMathOperator{\hom}{Hom}
\title{A class formula for $L$-series in positive characteristic}
\author{Florent Demeslay}
\date{January, 2015}
\begin{document}

\maketitle

\begin{abstract}
We prove a formula for special $L$-values of Anderson modules, analogue in positive characteristic of the class number formula.
We apply this result to two kinds of $L$-series.
\end{abstract}

\section{Introduction} \label{sec-intro}

Let $\fq$ be a finite field with $q$ elements and $\theta$ an indeterminate over $\fq$.
We denote by $A$ the polynomial ring $\fq[\theta]$ and by $K$ the fraction field of $A$.
For a finite $A$-module $M$, we denote by $[M]_A$ the monic generator of the Fitting ideal of $M$.
The \emph{Carlitz zeta value} at a positive integer $n$ is defined as
\[ \zeta_A(n) := \sum\limits_{a \in A_+} \frac{1}{a^n} \in \kinf := \fq((\theta^{-1})), \]
where $A_+$ is the set of monic polynomials of $A$.

The \emph{Carlitz module} $C$ is the functor that associates to an $A$-algebra $B$ the $A$-module $C(B)$ whose underlying $\fq$-vector space is $B$ and whose $A$-module structure is given by the homomorphism of $\fq$-algebras
\[ \begin{array}{cccc}
    \varphi_C \colon & A & \rightarrow & \enm_{\fq}(B) \\
      & \theta & \mapsto & \theta + \tau
   \end{array}, \]
where $\tau$ is the Frobenius endomorphism $b \mapsto b^q$.
For $P$ a prime of $A$ (\ie a monic irreducible polynomial), one can show (see \cite[theorem 3.6.3]{Gos}) that $[C(A/PA)]_A = P - 1$.
Thus 
\begin{equation} \label{eqn-zeta1}
 \zeta_A(1) = \prod\limits_{P\ \text{prime}} \left( 1 - \frac{1}{P} \right)^{-1} = \prod\limits_{P\ \text{prime}} \frac{[\lie(C)(A/PA)]_A}{[C(A/PA)]_A}. 
\end{equation}

Recently, Taelman \cite{Tae12} associates, to a Drinfeld module $\phi$ over the ring of integers $R$ of a finite extension of $K$, a finite $A$-module called the \emph{class module} $H(\phi/R)$ and an $L$-series value $L(\phi/R)$.
In particular, if $\phi$ is the Carlitz module and $R$ is $A$, thanks to \eqref{eqn-zeta1}, we have
\[ L(C/A) = \zeta_A(1). \]
These objects are related by a \emph{class formula}: $L(\phi/R)$ is equal to the product of $[H(\phi/R)]_A$ times a regulator (see theorem 1 of loc. cit.).

This class formula was generalized by Fang \cite{Fan14}, using the theory of shtukas and ideas of Vincent Lafforgue, to abelian $t$-modules over $A$, which are $n$-dimensional analogues of Drinfeld modules.
In particular, for $\cn$, the $n^{th}$ tensor power of the Carlitz module, introduced by Anderson and Thakur \cite{AndTha90}, we have
\[ L(\cn/A) = \zeta_A(n) \]
and this is related to a class module and a regulator as in the work of Taelman.

On  the other hand, Pellarin \cite{Pel12} introduced a new class of $L$-series.
Let $t_1,\dots,t_s$ be indeterminates over $\cinf$, the completion of a fixed algebraic closure of $\kinf$.
For each $1 \leq i \leq s$, let $\chi_{t_i} : A \rightarrow \fq[t_1,\dots,t_s]$ be the $\fq$-linear ring homomorphism defined by $\chi_{t_i}(\theta) = t_i$.
Then, the \emph{Pellarin's $L$-value} at a positive integer $n$ is defined as
\[ L(\chi_{t_1} \cdots \chi_{t_s},n) := \sum\limits_{a \in A_+} \frac{\chi_{t_1}(a) \cdots \chi_{t_s}(a)}{a^n} \in \fq[t_1,\dots,t_s] \otimes_{\fq} \kinf. \]

In this paper, inspired by ideas developed by Taelman in \cite{Tae12}, we prove a class formula for abelian $t$-modules over $\fq(t_1,\dots,t_s)[\theta]$.
In particular, for $s = 0$, we recover theorem 1.10 of \cite{Fan14}.
Then, we express Pellarin's $L$-values as a product of quotients of Fitting ideals in the manner of \eqref{eqn-zeta1}.
Thus, we obtain a class formula for these $L$-values (see section \ref{sec-cfPellarinL}).
This result was already used by Angl\`es, Pellarin and Tavares Ribeiro \cite{APTR14} in the $1$-dimensional case, \ie for Drinfeld modules.

Finally, let $a \in A_+$ be squarefree and $L$ be the cyclotomic field associated with $a$, \ie the finite extension of $K$ generated by the $a$-torsion of the Carlitz module.
It is a Galois extension of group $\Delta_a \simeq (A/aA)^\times$.
Let $\chi\colon (A/aA)^\times \rightarrow F^*$ be a homomorphism where $F$ is a finite extension of $\fq$.
The special value at a positive integer $n$ of Goss $L$-series associated to $\chi$ is defined as 
\[ L(n,\chi) := \sum\limits_{b \in A_+} \frac{\chi(\overline{b})}{b^n} \in F \otimes_{\fq} \kinf, \]
where $\overline{b}$ is the image of $b$ in $(A/aA)^\times$.
We can group all the $L(n,\chi)$ together in one equivariant $L$-value $L(n,\Delta_a)$.
Then, we prove an equivariant class formula for these $L$-values (see theorem \ref{thm-fcequi}), generalizing that of Angl\`es and Taelman \cite{AngTae12} in the case $n = 1$.

\subsection*{Acknowledgements}
The author sincerely thanks Bruno Angl\`es, Lenny Taelman and Floric Tavares Ribeiro for fruitful discussions and useful remarks.

\section{Anderson modules and class formula} \label{sec-andmodclassformula}

\subsection{Lattices} \label{sec-lattices}

Let $\fq$ be the finite field with $q$ elements and $\theta$ an indeterminate over $\fq$.
We denote by $A$ the polynomial ring $\fq[\theta]$ and by $K$ the fraction field of $A$.
Let $\infty$ be the unique place of $K$ which is a pole of $\theta$ and $v_\infty$ the discrete valuation of $K$ corresponding to this place with the normalization $v_\infty(\theta) = -1$.
The completion of $K$ at $\infty$ is denoted by $\kinf$.
We have $\kinf = \fq((\theta^{-1}))$.
We denote by $\cinf$ a fixed completion of an algebraic closure of $\kinf$.
The valuation on $\cinf$ that extends $v_\infty$ is still denoted by $v_\infty$.

Let $s \geq 0$ be an integer and $t_1,\dots,t_s$ indeterminates over $\cinf$.
We set $k_s := \fq(t_1,\dots,t_s)$, $R_s := k_s[\theta]$, $K_s := k_s(\theta)$ and $\ksinf := k_s((\theta^{-1}))$.
For $f \in \cinf[t_1,\dots,t_s]$ a polynomial expanded as a finite sum 
\[ f = \sum\limits_{i_1,\dots,i_s \in \mathbb N} \alpha_{i_1,\dots,i_s} t_1^{i_1} \cdots t_s^{i_s}, \]
with $\alpha_{i_1,\dots,i_s} \in \cinf$,
we set 
\[ v_\infty(f) := \inf\left\{ v_\infty(\alpha_{i_1,\dots,i_s}) \mid i_1,\dots,i_s \in \mathbb N \right\}. \]
For $f \in \cinf(t_1,\dots,t_s)$, there exists $g$ and $h$ in $\cinf[t_1,\dots,t_s]$ such that $f = g/h$, then we define $v_\infty(f) := v_\infty(g) - v_\infty(h)$.
We easily check that $v_\infty$ is a valuation, trivial on $k_s$, called the \emph{Gauss valuation}.
For $f \in \cinf[t_1,\dots,t_s]$, we set $\| f \|_\infty := q^{-v_\infty(f)}$ if $f \neq 0$ and $\| 0 \|_\infty = 0$.
The function $\|\cdot\|_\infty$ is called the \emph{Gauss norm}.
We denote by $\csinf$ the completion of $\cinf(t_1,\dots,t_s)$ with respect to $v_\infty$.

Let $V$ be a finite dimensional $\ksinf$-vector space and $\|\cdot\|$ be a norm on $V$ compatible with $\|\cdot\|_\infty$ on $\ksinf$.
For $r>0$, we denote by $B(0,r) := \left\{ v \in V \mid \|v\| < r \right\}$ the open ball of radius $r$, which is a $k_s$-subspace of $V$.


\begin{defi} \label{defi-lattices}
 A sub-$R_s$-module $M$ of $V$ is an $R_s$-\emph{lattice} of $V$ if it is free of rank one and the $\ksinf$-vector space spanned by $M$ is $V$.
\end{defi}

We can characterize these lattices.

\begin{lem} \label{lem-caralattices}
 Let $V$ be a $\ksinf$-vector space of dimension $n\geq 1$ and $M$ be a sub-$R_s$-module of~$V$.
 The following assertions are equivalent:
 \begin{enumerate}
  \item $M$ is an $R_s$-lattice of $V$;
  \item $M$ is discrete in $V$ and every open subspace of the $k_s$-vector space $V/M$ is of finite co-dimension.
 \end{enumerate}
\end{lem}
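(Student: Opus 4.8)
The plan is to prove the two implications separately. The direction $(1)\Rightarrow(2)$ is essentially a computation on a reference lattice, while $(2)\Rightarrow(1)$ is the substantive converse. The condition on open subspaces of $V/M$ plays the role that cocompactness plays over $\mathbb R$, and it is precisely the device that lets the argument go through even when $\ksinf$ fails to be locally compact, that is, when $s\geq 1$.

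For $(1)\Rightarrow(2)$, fix a free basis $e_1,\dots,e_n$ of $M$; since $M$ is a lattice these form a $\ksinf$-basis of $V$, so we may identify $V$ with $\ksinf^n$ (all norms on a finite-dimensional $\ksinf$-space being equivalent) and $M$ with $R_s^n$. Discreteness then follows because a nonzero element of $R_s=k_s[\theta]$ has Gauss norm $\geq 1$, so $R_s$, hence $R_s^n$, is uniformly discrete. For the codimension statement I would identify $\ksinf/R_s$ with the space of principal parts $\theta^{-1}k_s[[\theta^{-1}]]$ and observe that the images of the balls $B(0,q^{-N})$ form a neighbourhood basis of $0$ consisting of open $k_s$-subspaces of finite codimension; since $V/M\cong(\ksinf/R_s)^n$ and any open subspace contains the image of some $B(0,q^{-N})$, it has finite codimension. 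This also records that $V/M$ is ``linearly compact,'' which is what the converse uses.

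For $(2)\Rightarrow(1)$, I would first show that $M$ spans $V$. If the $\ksinf$-span $W$ of $M$ were a proper subspace (necessarily closed, being finite-dimensional), then $V/M$ would surject continuously and openly onto $V/W\cong\ksinf^{n-\dim W}$; but $\ksinf$ contains the open subspace $\theta^{-1}k_s[[\theta^{-1}]]$ of infinite codimension, and pulling it back would produce an open subspace of $V/M$ of infinite codimension, contradicting $(2)$. So $M$ spans $V$, and I may choose $m_1,\dots,m_n\in M$ forming a $\ksinf$-basis and set $M':=R_s m_1\oplus\cdots\oplus R_s m_n\subseteq M$, a reference lattice. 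The key step is to transport discreteness to the quotient: since $M$ is a subgroup, discreteness gives $\varepsilon>0$ with $B(0,\varepsilon)\cap M=\{0\}$, hence uniform separation $\geq\varepsilon$ on $M$; writing $m'=m-b\in M'$ for $b\in B(0,\varepsilon)$ then shows that the open $k_s$-subspace $\pi(B(0,\varepsilon))$ of $V/M'$ (with $\pi\colon V\to V/M'$) meets $M/M'$ only in $0$. By the implication $(1)\Rightarrow(2)$ applied to $M'$ this open subspace has finite codimension, so $M/M'$ embeds into a finite-dimensional $k_s$-space and is therefore finite-dimensional over $k_s$.

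Finally I would deduce freeness and the correct rank. Being finite-dimensional over $k_s$, the $R_s$-module $M/M'$ is killed by the characteristic polynomial of multiplication by $\theta$ (Cayley--Hamilton), hence is $R_s$-torsion; so $M$, as an extension of the finitely generated $M'$ by a finitely generated torsion module, is finitely generated over the PID $R_s$, and being torsion-free inside $V$ it is free. Tensoring $0\to M'\to M\to M/M'\to 0$ with $K_s=\operatorname{Frac}(R_s)$ kills the torsion quotient and gives $\rk_{R_s}M=\rk_{R_s}M'=n=\dim_{\ksinf}V$; together with the spanning property this exhibits $M$ as an $R_s$-lattice. I expect the main obstacle to be exactly this converse: for $s=0$ one could invoke local compactness to turn ``bounded and discrete'' into ``finite,'' but for $s\geq 1$ the field $\ksinf$ is not locally compact, and it is the finite-codimension hypothesis of $(2)$, rather than compactness of $V/M$, that replaces it through the torsion/rank computation above.
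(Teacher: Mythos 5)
Your proof is correct and follows essentially the same route as the paper: for $(1)\Rightarrow(2)$ an explicit computation with balls relative to a basis, and for $(2)\Rightarrow(1)$ a reference lattice $M'$ spanned by a $\ksinf$-basis extracted from $M$, the implication $(1)\Rightarrow(2)$ applied to $M'$ plus discreteness to get $\dim_{k_s} M/M' < \infty$, and then the PID structure of $R_s$ to conclude freeness and rank. The only difference is cosmetic: you establish that $M$ spans $V$ first (usefully fleshing out, via the pullback of an infinite-codimension open subspace, the step the paper compresses into ``if $m<n$, $V/M$ can not verify the co-dimensional property''), whereas the paper works inside the span $W$ and checks $W=V$ last.
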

\begin{proof}
 Let us suppose that $M$ is an $R_s$-lattice of $V$, \ie there exists a family $(e_1,\dots,e_n)$ of elements of $M$ such that
 \[ M = \bigoplus\limits_{i = 1}^n R_s e_i \quad \text{and} \quad V = \bigoplus\limits_{i = 1}^n \ksinf e_i. \]
 Any element $v$ of $V$ can be uniquely written as $v = \sum\limits_{i = 1}^n v_i e_i$ with $v_i \in \ksinf$.
 Then, we set $\|v\| := \max\left\{ \|v_i\|_\infty \mid i = 1,\dots,n \right\}$.
 Since $R_s$ is discrete in $\ksinf$, this implies that $M$ is discrete in $V$.
 Now, let $m \geq 0$ be an integer.
 We have 
 \[ B \left(0,q^{-m}\right) = \bigoplus\limits_{i = 1}^n \theta^{- m - 1} k_s[[\theta^{-1}]] e_i. \]
 In particular, we have $V = M \oplus B(0,1)$ and
 \[ \dim_{k_s} \frac{B(0,q^{-m})}{B(0,q^{- m - 1})} = n. \]
 This implies that every open $k_s$-subspace of $V/M$ is of finite co-dimension.
 
 Reciprocally, let us suppose that $M$ is discrete in $V$ and every open subspace of the $k_s$-vector space $V/M$ is of finite co-dimension.
 Let $W$ be the $\ksinf$-subspace of $V$ generated by $M$ and $m$ be its dimension.
 There exist $e_1,\dots,e_m$ in $M$ such that
 \[ W = \bigoplus\limits_{i = 1}^m \ksinf e_i. \]
 Set 
 \[ N = \bigoplus\limits_{i = 1}^m R_s e_i. \]
 This is a sub-$R_s$-module of $M$ and an $R_s$-lattice of $W$.
 In particular, $M/N$ is discrete in $W/N$.
 Since any open $k_s$-subspace of $W/N$ is of finite co-dimension, we deduce that $M/N$ is a finite dimensional $k_s$-vector space.
 This implies that $M$ is a free $R_s$-module of rank $m$.
 Finally, observe that, if $m<n$, $V/M$ can not verify the co-dimensional property, thus $W=V$.
\end{proof}

\subsection{Anderson modules and exponential map} \label{sec-andmod}

Let $L$ be a finite extension of $K$, $L \subseteq \cinf$. 
We define $\rls$ to be the subring of $L_s := L(t_1,\dots,t_s)$ generated by $k_s$ and $\ol$, where $\ol$ is the integral closure of $A$ in $L$.
We set $\lsinf := L \otimes_K \ksinf$. This is a finite dimensional $\ksinf$-vector space.
We denote by $S_\infty(L)$ the set of places of $L$ above~$\infty$.
For a place $\nu \in S_\infty(L)$, we denote by $L_\nu$ the completion of $L$ with respect to $\nu$.
Let $\pi_\nu$ be a uniformizer of $L_\nu$ and $\mathbb F_\nu$ be the residue field of $L_\nu$. 
Then, we define $L_{s,\nu} := \mathbb F_\nu(t_1,\dots,t_s)((\pi_\nu))$ viewed as a subfield of $\csinf$.
We have an isomorphism of $\ksinf$-algebras
\[ \lsinf \simeq \prod\limits_{\nu \in S_\infty(L)} L_{s,\nu}. \]
Observe that $\rls$ is an $R_s$-lattice in the $\ksinf$-vector space $\lsinf$.

Let $\tau \colon \csinf \rightarrow \csinf$ be the morphism of $k_s$-algebras given by the $q$-power map on $\cinf$. 

\begin{lem} \label{lem-csinftau1}
 The elements of $\csinf$ fixed by $\tau$ are those of $k_s$.
\end{lem}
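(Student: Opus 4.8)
The plan is to prove the two inclusions separately. The inclusion $k_s \subseteq \csinf^\tau$ is immediate, since $\tau$ is by definition a morphism of $k_s$-algebras and therefore fixes $k_s$ pointwise. For the reverse inclusion the guiding observation is that $\tau$ scales the Gauss valuation: for $\alpha \in \cinf$ one has $v_\infty(\tau(\alpha)) = v_\infty(\alpha^q) = q\, v_\infty(\alpha)$, while $v_\infty$ is trivial on $k_s$, so the relation $v_\infty(\tau(x)) = q\, v_\infty(x)$ holds first on the dense subring $\cinf(t_1,\dots,t_s)$ and then, by continuity, on all of $\csinf$. Consequently every nonzero $x$ fixed by $\tau$ satisfies $v_\infty(x) = q\, v_\infty(x)$, which forces $v_\infty(x) = 0$; in other words a $\tau$-fixed element has valuation either $0$ or $+\infty$.

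First I would pass to the residue field. The scaling relation shows that $\tau$ preserves the valuation ring $\mathcal{O} = \{v_\infty \geq 0\}$ and its maximal ideal $\mathfrak{m} = \{v_\infty > 0\}$, hence induces a map $\bar\tau$ on the residue field of $\csinf$. This residue field is $\overline{\fq}(t_1,\dots,t_s)$, because $\cinf$ has residue field $\overline{\fq}$ and the reductions of the $t_i$ are transcendental; by construction $\bar\tau$ raises $\overline{\fq}$-coefficients to the $q$-th power and fixes each $t_i$.

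The main step is then to determine the fixed field $\overline{\fq}(t_1,\dots,t_s)^{\bar\tau}$, and I expect this to be the only point requiring genuine care. I would show it equals $k_s = \fq(t_1,\dots,t_s)$: writing a $\bar\tau$-fixed rational function in lowest terms with monic denominator as $P/Q$, the uniqueness of this normal form together with the fact that $\bar\tau$ is a ring automorphism fixing the $t_i$ forces $\bar\tau(P) = P$ and $\bar\tau(Q) = Q$, so the coefficients of $P$ and $Q$ are fixed by the $q$-Frobenius and hence lie in $\fq$. Equivalently, $\overline{\fq}(t_1,\dots,t_s)/\fq(t_1,\dots,t_s)$ is Galois with group $\gal(\overline{\fq}/\fq)$ topologically generated by $\bar\tau$, so the fixed field of $\bar\tau$ is the base field $k_s$; this is in essence the classical computation of the fixed field of Frobenius.

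Finally I would lift the conclusion. Given a nonzero $x \in \csinf^\tau$, its reduction $\bar x$ lies in $k_s$ by the previous step; since $v_\infty$ is trivial on $k_s$, reduction identifies $k_s$ with its image in the residue field, so there is a unique $c \in k_s$ with $\bar c = \bar x$. Then $x - c$ is again fixed by $\tau$ but satisfies $v_\infty(x - c) > 0$, so by the dichotomy established at the start we must have $v_\infty(x-c) = +\infty$, that is $x = c \in k_s$. This yields $\csinf^\tau \subseteq k_s$ and completes the proof.
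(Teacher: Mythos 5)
Your proof is correct and is essentially the paper's own argument: both rest on the fact that $\tau$ scales the Gauss valuation by $q$ (so a nonzero fixed element must have $v_\infty = 0$) together with the computation $\overline{\fq}(t_1,\dots,t_s)^{\tau = 1} = k_s$. The only cosmetic difference is that the paper realizes the residue field inside $\csinf$ via the $\tau$-stable splitting $\left\{ f \in \csinf \mid v_\infty(f) \geq 0 \right\} = \overline{\fq}(t_1,\dots,t_s) \oplus \left\{ f \in \csinf \mid v_\infty(f) > 0 \right\}$ and projects a fixed element onto the two summands, whereas you reduce modulo the maximal ideal and lift back; you also supply proofs of the two supporting facts (the valuation scaling and the Frobenius fixed-field computation) that the paper merely asserts.
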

\begin{proof}
 Obviously, $k_s \subseteq \csinf^{\tau = 1}$.
 Reciprocally, observe that $\csinf^{\tau = 1} \subseteq \{ f \in \csinf \mid v_\infty(f) = 0 \}$.
 But we have the direct sum of $\fq[\tau]$-modules
 \[ \left\{ f \in \csinf \mid v_\infty(f) \geq 0 \right\} = \overline{\fq}(t_1,\dots,t_s) \oplus \left\{ f \in \csinf \mid v_\infty(f) > 0 \right\}. \]
 Since $\overline{\fq}(t_1,\dots,t_s)^{\tau = 1} = k_s$, we get the result. 
\end{proof}

The action of $\tau$ on $\lsinf = L \otimes_K \ksinf$ is the diagonal one $\tau \otimes \tau$.

\begin{defi} \label{defi-andersonmodule}
 Let $r$ be a positive integer. An \emph{Anderson module} $E$ over $\rls$ is a morphism of $k_s$-algebras
 \[ \begin{array}{cccc} 
    \phi_E \colon & R_s & \longrightarrow & M_n(\rls)\{\tau\} \\
    & \theta & \longmapsto & \displaystyle\sum\limits_{j = 0}^r A_j \tau^j \\
   \end{array} \]
 for some $A_0,\dots,A_r \in M_n(\rls)$ such that $(A_0 - \theta I_n)^n = 0$. 
\end{defi}

These objects are usually called \emph{abelian $t$-motives} as in the terminology of \cite{And86} but, to avoid confusion between $t$ and the indeterminates $t_1,\dots,t_s$, we prefer called them Anderson modules.
Note also that Drinfeld modules are one-dimensional Anderson modules.

For a matrix $A = (a_{ij}) \in M_n(\csinf)$, we set $v_\infty(A) := \min\limits_{1 \leq i,j \leq n}\left\{ v_\infty(a_{ij}) \right\}$ and $\tau(A) := (\tau(a_{ij})) \in M_n(\csinf)$.

\begin{prop} \label{prop-exp}
 There exists a unique skew power series $\expe := \sum\limits_{j \geq 0} e_j\tau^j$ with coefficients in $M_n(L_s)$ such that
 \begin{enumerate}
  \item $e_0 = I_n$;
  \item $\expe A_0 = \phi_E(\theta)\expe$ in $M_n(L_s)\{\{\tau\}\}$;
  \item $\lim\limits_{j\rightarrow\infty} \frac{v_\infty(e_j)}{q^j} = +\infty$.
 \end{enumerate}
\end{prop}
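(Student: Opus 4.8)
The plan is to pin down $\expe = \sum_{j\ge0} e_j \tau^j$ by expanding the intertwining relation (ii) coefficient by coefficient in $\tau$. Writing $\phi_E(\theta) = \sum_{i=0}^r A_i \tau^i$ and using the rule $\tau^j X = \tau^j(X)\tau^j$ valid in $M_n(L_s)\{\{\tau\}\}$, the coefficient of $\tau^j$ in $\expe A_0 = \phi_E(\theta)\expe$ reads
\[ e_j\,\tau^j(A_0) = \sum_{i=0}^{\min(r,j)} A_i\,\tau^i(e_{j-i}). \]
Splitting off the $i=0$ term, this is equivalent to the commutator (Sylvester-type) equation
\[ \Phi_j(e_j) := e_j\,\tau^j(A_0) - A_0\,e_j = \sum_{i=1}^{\min(r,j)} A_i\,\tau^i(e_{j-i}) =: C_j, \]
whose right-hand side $C_j$ depends only on $e_0,\dots,e_{j-1}$. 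Since $L_s$, being a field, is closed under $q$-th powers we have $\tau(L_s)\subseteq L_s$, and likewise $\tau(\rls)\subseteq\rls$ with $A_0,\dots,A_r\in M_n(\rls)$; hence every matrix above lies in $M_n(L_s)$, $\Phi_j$ is an $L_s$-linear endomorphism of $M_n(L_s)$, and the whole recursion stays inside $M_n(L_s)$.

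For uniqueness and formal existence I would prove that $\Phi_j$ is invertible for every $j\ge1$. From $(A_0-\theta I_n)^n=0$ the matrix $A_0$ has $\theta$ as its only eigenvalue, and applying $\tau^j$ gives $(\tau^j(A_0)-\theta^{q^j}I_n)^n=0$, so $\tau^j(A_0)$ has the single eigenvalue $\theta^{q^j}$. Now $\Phi_j$ is the difference of the commuting operators $X\mapsto X\,\tau^j(A_0)$ and $X\mapsto A_0X$, whose eigenvalues are those of $\tau^j(A_0)$ and of $A_0$ respectively; being simultaneously triangularizable, their difference has eigenvalues among the differences, so the unique eigenvalue of $\Phi_j$ is $\theta^{q^j}-\theta$. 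As $\theta$ is transcendental over $\fq$, this is nonzero for $j\ge1$, whence $\Phi_j$ is invertible and $e_j:=\Phi_j^{-1}(C_j)$ is forced. For $j=0$ the equation reduces to $e_0A_0=A_0e_0$, consistent with the normalization (i) fixing $e_0=I_n$. This yields a unique formal series satisfying (i) and (ii).

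It remains to establish the growth condition (iii), which is the analytic heart of the statement. Writing $A_0=\theta I_n+N$ with $N$ nilpotent, one has $\Phi_j=(\theta^{q^j}-\theta)\id + D_j$ where $D_j(X)=X\,\tau^j(N)-NX$ is nilpotent, so $\Phi_j^{-1}=(\theta^{q^j}-\theta)^{-1}\sum_{k\ge0}(-1)^k(\theta^{q^j}-\theta)^{-k}D_j^{\,k}$ is a finite sum. Using $v_\infty(\theta^{q^j}-\theta)=-q^j$ together with $v_\infty(\tau^i(e_{j-i}))=q^i\,v_\infty(e_{j-i})$, I would feed these into the recursion and prove by induction a lower bound of the form $v_\infty(e_j)\ge j\,q^j-c\,q^j$ for a constant $c$ depending only on $E$, which gives $v_\infty(e_j)/q^j\to+\infty$. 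The main obstacle is controlling $\Phi_j^{-1}$ when $N$ has a pole at $\infty$, i.e. $v_\infty(N)<0$: then the higher terms $D_j^{\,k}$ in the Neumann series lower valuations by multiples of $q^j$, so a naive bound $v_\infty(\Phi_j^{-1}X)\ge v_\infty(X)+\gamma q^j$ may have $\gamma\le 0$ and is too lossy. The key to overcome this is that these valuation-lowering contributions act only on entries of $C_j$ that already carry the large valuation $\sim q^i\,v_\infty(e_{j-i})$ accumulated at earlier steps; carrying out the induction componentwise along the nilpotency filtration of $N$ shows that the net effect per step is still a gain of order $q^j$, so the estimate survives and (iii) follows, giving the everywhere-convergent series $\expe$.
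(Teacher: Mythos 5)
Your first half is correct and complete: expanding condition (ii) coefficientwise in $\tau$ does reduce everything to the Sylvester equations $\Phi_j(e_j)=C_j$, and since $\Phi_j=(\theta^{q^j}-\theta)\id+D_j$ with $D_j(X)=X\tau^j(N)-NX$ a difference of commuting nilpotent multiplications (hence nilpotent), $\Phi_j$ is invertible for all $j\geq 1$ and the $e_j\in M_n(L_s)$ are uniquely determined. Note there is nothing to compare against in the paper itself at this point: the paper gives no argument and simply cites Proposition 2.1.4 of \cite{And86}, so any complete proof here is necessarily "different from the paper's".

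The genuine gap is exactly where you flagged it, in condition (iii), and your proposed resolution is asserted rather than proved --- moreover, in the per-step form you state it ("the net effect per step is still a gain of order $q^j$"), it cannot be made to work with a single Gauss-valuation induction hypothesis. Concretely, take $n=2$ and $N=\bigl(\begin{smallmatrix}0&\nu\\0&0\end{smallmatrix}\bigr)$ with $v_\infty(\nu)=-m$: solving $\Phi_j(X)=C$ explicitly produces in the entry $x_{12}$ the term $-\nu\,c_{21}\,\tau^j(\nu)\,(\theta^{q^j}-\theta)^{-3}$, of valuation $v_\infty(c_{21})+(3-m)q^j-m$, a per-step \emph{loss} of order $(m-3)q^j$ in some entries whenever $m\geq 4$; so no bound of the form $v_\infty(e_j)\geq \beta_j q^j$ with $\beta_j$ increasing can propagate step by step. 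What saves the statement is global, not per-step: when the recursion is unwound, every right-hand nilpotent factor carries the \emph{same} twist $\tau^j$ (since $\tau^i\tau^{j-i}=\tau^j$), so nilpotency caps the total number of loss events. The clean way to implement this --- and the way to repair your sketch --- is a single change of coordinates: choose $P\in\gln(L_s)$ with $P^{-1}A_0P=\theta I_n+J$, $J$ a constant Jordan nilpotent, and set $\tilde e_j:=P^{-1}e_j\tau^j(P)$, $\tilde A_i:=P^{-1}A_i\tau^i(P)$. One checks the $\tilde e_j$ satisfy the same recursion with $A_0$ replaced by $\theta I_n+J$; now $v_\infty(J)\geq 0$ gives the exact gain $v_\infty(\Phi_j^{-1}X)\geq v_\infty(X)+q^j$, the induction closes with $v_\infty(\tilde e_j)\geq (j/r-c)\,q^j$, and transporting back via $e_j=P\tilde e_j\tau^j(P^{-1})$ only costs $q^j\,v_\infty(P^{-1})+v_\infty(P)=O(q^j)$, which proves (iii). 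Your "induction along the nilpotency filtration" is essentially this conjugation in disguise, but since the decisive estimate is the analytic heart of the proposition and is left as a claim, the proposal as written is incomplete.
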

\begin{proof}
 See proposition 2.1.4 of \cite{And86}.
\end{proof}

Observe that $\expe$ is locally isometric. 
Indeed, by the third point, 
\[ c := \sup_{j \geq 1} \left( \frac{-v_\infty(e_j)}{q^j-1} \right) \]
is finite. 
Then, for any $x \in \lsinf^n$ such that $v_\infty(x)>c$, we have
\[ v_\infty\left(\sum\limits_{j\geq 0}e_j\tau^j(x)-x\right) \geq \min\limits_{j\geq 1}\left(v_\infty(e_j)+q^jv_\infty(x)\right) > v_\infty(x). \]

If $B$ is an $\rls$-algebra, we denote by $E(B)$ the $k_s$-vector space $B^n$ equipped with the structure of $R_s$-module induced by $\phi_E$.
We can also consider the tangent space $\lie(E)(B)$ which is the $k_s$-vector space $B^n$ whose $R_s$-module structure is given by the morphism of $k_s$-algebras
\[ \begin{array}{cccc} 
    \partial \colon & R_s & \longrightarrow & M_n(\rls) \\ 
     & \theta & \longmapsto & A_0
   \end{array}. \]
In particular, by the previous proposition, we get a continuous $R_s$-linear map
\[ \expe \colon \lie(E)(\lsinf) \longrightarrow E(\lsinf). \]

\subsection{The class formula} \label{sec-cnf}

In this section, we define a class module and two lattices in order to state the main result.

\begin{lem} \label{lem-fang} \
  \begin{enumerate}
  \item $A_0^{q^n} = \theta^{q^n}I_n$ ;
  \item $\inf\limits_{j \in \mathbb Z}\left( v_\infty(A_0^j) + j \right)$ is finite.
 \end{enumerate}
\end{lem}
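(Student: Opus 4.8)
The plan is to exploit the single structural hypothesis $(A_0 - \theta I_n)^n = 0$, i.e. that $N := A_0 - \theta I_n$ is nilpotent, together with the fact that the coefficient ring $\rls$ has characteristic $p$ (where $q$ is a power of $p$).

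For the first assertion, I would write $A_0 = \theta I_n + N$. Since the scalar matrix $\theta I_n$ commutes with $N$, the Frobenius identity $(x+y)^p = x^p + y^p$, iterated $n$ times on $q = p^m$, applies and gives
\[ A_0^{q^n} = (\theta I_n + N)^{q^n} = (\theta I_n)^{q^n} + N^{q^n} = \theta^{q^n} I_n + N^{q^n}. \]
Because $q \geq 2$ forces $q^n \geq n$, the relation $N^n = 0$ yields $N^{q^n} = 0$, and the first claim follows. In passing this shows $A_0$ is invertible in $M_n(L_s)$, with $A_0^{-1} = \theta^{-q^n} A_0^{q^n - 1}$, so that negative powers $A_0^j$ are well defined.

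For the second assertion, the idea is to use the first to reduce an infimum over all of $\mathbb{Z}$ to a minimum over a finite set. Given $j \in \mathbb{Z}$, I perform Euclidean division $j = q^n a + b$ with $a \in \mathbb{Z}$ and $0 \leq b < q^n$. Since $A_0^{q^n} = \theta^{q^n} I_n$ is a scalar matrix, $A_0^j = \theta^{q^n a} A_0^b$, and because $v_\infty(\theta) = -1$ and $v_\infty$ is additive under multiplication by a scalar,
\[ v_\infty(A_0^j) + j = \big(-q^n a + v_\infty(A_0^b)\big) + \big(q^n a + b\big) = v_\infty(A_0^b) + b. \]
Thus $v_\infty(A_0^j)+j$ depends only on the residue $b$, takes at most $q^n$ distinct values, and
\[ \inf_{j \in \mathbb{Z}}\left( v_\infty(A_0^j) + j \right) = \min_{0 \leq b < q^n}\left( v_\infty(A_0^b) + b \right). \]
Each $A_0^b$ is a nonzero (indeed invertible) matrix with entries in $L_s$, so each $v_\infty(A_0^b)$ is finite; the right-hand side is a minimum of finitely many finite numbers, hence finite.

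I expect no serious obstacle: both parts are essentially formal once the characteristic-$p$ structure is recognized. The only points requiring a moment's care are the elementary inequality $q^n \geq n$ needed to annihilate $N^{q^n}$, and the fact that the Euclidean reduction must be valid for negative $j$ as well, which is why establishing invertibility of $A_0$ as a byproduct of the first part is convenient. Conceptually, the content of the lemma is that $\theta^{-1}A_0$ is a unipotent matrix of finite ($p$-power) order, namely $(\theta^{-1}A_0)^{q^n} = I_n$; both statements simply record this boundedness of $A_0$ modulo the scalars $\theta^{\mathbb{Z}}$.
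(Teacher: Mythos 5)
Your proof is correct and takes essentially the same route as the source: the paper gives no argument of its own, deferring to lemma 1.4 of Fang's paper, whose proof is precisely your Frobenius-in-characteristic-$p$ computation $A_0^{q^n} = (\theta I_n + N)^{q^n} = \theta^{q^n} I_n + N^{q^n}$ with $N^{q^n} = 0$ because $q^n \geq n$, followed by reducing $j$ modulo $q^n$ to make $v_\infty(A_0^j)+j$ periodic in $j$. Your two added points of care (the inequality $q^n \geq n$ and the invertibility of $A_0$ needed to handle negative $j$) are exactly the details the citation leaves implicit.
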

\begin{proof}
 See lemma 1.4 of \cite{Fan14}. 
\end{proof}

By the second point, for any $a_j \in k_s$ and $m\in\mathbb Z$, the series $\sum\limits_{j\geq m} a_j A_0^{-j}$ converges in $M_n(\lsinf)$. 
Thus, $\partial$ can be uniquely extended to a morphism of $k_s$-algebras by
\[ \begin{array}{cccc} 
    \partial\colon & \ksinf & \longrightarrow & M_n(\lsinf) \\ 
    & \displaystyle \sum\limits_{j\geq m} a_j \frac{1}{\theta^j} & \longmapsto & \displaystyle \sum\limits_{j\geq m} a_j A_0^{-j}
   \end{array}, \]
where $a_j \in k_s$ and $m \in \mathbb Z$. 
Then, $\lie(E)(\lsinf)$ inherits a $\ksinf$-vector space structure.
Observe, by the first point of the lemma, that, for any $f\in k_s((\theta^{-q^n}))$, we have $\partial(f)=f I_n$, \ie the action is the scalar multiplication for these elements.
In particular, we get an isomorphism $\lie(E)(\lsinf) \simeq \lsinf^n$ as $k_s((\theta^{-q^n}))$-modules.
We deduce that $\lie(E)(\lsinf)$ is a $k_s((\theta^{-q^n}))$-vector space of dimension $nq^n$, so of dimension $n$ over $\ksinf$.


\begin{prop} \label{prop-lielattice}
 The $R_s$-module $\lie(E)(\rls)$ is an $R_s$-lattice of $\lie(E)(\lsinf)$.
 Furthermore, if $L = K$, the canonical basis is an $R_s$-base of $\lie(E)(R_s)$.
\end{prop}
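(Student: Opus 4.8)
The whole argument rests on the identity $A_0^{q^n}=\theta^{q^n}I_n$ of Lemma \ref{lem-fang}. Put $B:=k_s[\theta^{q^n}]\subseteq R_s$; the identity says that the operator by which $\partial$ makes $\theta^{q^n}$ act on any $\lie(E)(-)$ is simply multiplication by $\theta^{q^n}$. Hence, restricted to $B$, the $\partial$-structure coincides with the tautological one: as $B$-modules, $\lie(E)(\rls)$ is just $\rls^n$, and $\lie(E)(\lsinf)$ is just $\lsinf^n$ with its standard $k_s((\theta^{-q^n}))$-vector space structure. Since $B\cong R_s$ and $\ksinf$ is finite (degree $q^n$) over $k_s((\theta^{-q^n}))$, Lemma \ref{lem-caralattices} holds verbatim with $B$ in place of $R_s$, and ``$R_s$-lattice'' and ``$B$-lattice'' describe the same subsets.

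First I would prove the lattice statement. As $\rls$ is an $R_s$-lattice in $\lsinf$ and $B$ has finite index in $R_s$, $\rls$ is also a $B$-lattice in $\lsinf$ (free over $B$, discrete, with full $k_s((\theta^{-q^n}))$-span); taking $n$-th powers, $\rls^n$ is a $B$-lattice in $\lsinf^n$. But discreteness and the finite-co-dimension condition of Lemma \ref{lem-caralattices} are topological properties of the subset $\lie(E)(\rls)=\rls^n$ of the $k_s$-space $\lie(E)(\lsinf)=\lsinf^n$, insensitive to whether one remembers the $B$- or the $R_s$-action. Feeding these two properties back into Lemma \ref{lem-caralattices} (now over $R_s$, with $\lie(E)(\lsinf)$ a finite-dimensional $\ksinf$-space via $\partial$) shows that $\lie(E)(\rls)$ is an $R_s$-lattice.

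For the case $L=K$ I must show that the canonical basis $e_1,\dots,e_n$ generates $M:=\lie(E)(R_s)=R_s^n$ over $R_s$ acting through $A_0$; since $M$ is free of rank $n$ by the first part, generation forces a basis. Write $A_0=\theta I_n+N$ with $N\in M_n(R_s)$ nilpotent. The clean reduction is that, because $\partial$ makes $\theta$ act by $A_0$, in the quotient $M/\theta M=\coker(A_0\colon R_s^n\to R_s^n)$ the element $\theta$ acts by $0$; thus $M/\theta M$ is an $n$-dimensional $k_s$-vector space and generating $M$ amounts to the residues $\overline e_1,\dots,\overline e_n$ forming a $k_s$-basis of $\coker(A_0)$ (by Nakayama, localizing at each prime of $R_s$; the case of $\theta$ is representative). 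To prove such a statement I would triangularize: over the principal ideal domain $R_s$ each $\ker N^k$ is saturated, since $R_s^n/\ker N^k\cong\im N^k$ is torsion-free, hence a direct summand; refining the $A_0$-stable flag $\ker N\subset\ker N^2\subset\cdots$ into a complete flag of summands produces an $R_s$-basis $f_1,\dots,f_n$ in which $A_0$ is upper triangular with $\theta$ on the diagonal, $A_0 f_l\in\theta f_l+\langle f_1,\dots,f_{l-1}\rangle$. A short induction along the flag, using $A_0^j f_l\equiv\theta^j f_l$ modulo $\langle f_1,\dots,f_{l-1}\rangle$, then shows that the flag-adapted basis $\{f_l\}$ does generate $M$ over $R_s$, i.e. is an $R_s$-basis for the $\partial$-structure.

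The remaining, and I expect the only genuinely delicate, point is to transfer this from $\{f_l\}$ back to the canonical basis. Writing $e_i=\sum_l S_{li}f_l$ with $S\in\gln(R_s)$ (an ordinary change of basis, so $\det S\in k_s^\times$) and $e_i=\sum_l Q_{li}(A_0)f_l$ with $Q_{li}\in R_s$ (the expansion in the $\partial$-basis $\{f_l\}$), the canonical basis is a $\partial$-basis exactly when $\det(Q_{li})\in R_s^\times=k_s^\times$. The two expansions are linked by the same triangular relation $A_0^j f_l\equiv\theta^j f_l$ modulo lower flag steps, which makes the standard and the $\partial$ actions agree on the associated graded $\bigoplus_k\ker N^k/\ker N^{k-1}$ (where $A_0$ is the scalar $\theta$). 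The obstacle is to turn this into the equality, up to a unit, of the two change-of-basis determinants, forcing $\det(Q_{li})\in k_s^\times$; concretely one controls the strictly-lower-triangular corrections relating $(Q_{li})$ to $(S_{li})$, or equivalently computes everything over $B=k_s[\theta^{q^n}]$ through the block-Krylov matrix of $A_0$ attached to each seed system, where flag-triangularity pins the $\{f_l\}$-determinant to $1$ and exhibits the canonical one as a norm of a unit.
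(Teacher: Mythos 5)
Your first half is correct and close in spirit to the paper's: both arguments turn entirely on Lemma \ref{lem-fang}(1). The paper argues directly ($\lie(E)(\rls)$ is finitely generated over $k_s[\theta^{q^n}]$, hence over $R_s$, torsion-free because $\partial(a) = aI_n + \text{nilpotent}$ is invertible, hence free over the principal ring $R_s$, of rank $n$ by a dimension count), while you route the same input through the topological characterization of Lemma \ref{lem-caralattices}; since discreteness and the co-dimension condition depend only on the underlying topological $k_s$-spaces $(\lsinf^n, \rls^n)$ and not on which $R_s$-action one remembers, this transfer is legitimate.

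The second half has a genuine gap, exactly where you flagged it. Your flag construction does prove that \emph{some} $\partial$-basis exists: the kernels $\ker N^k$ (with $N := A_0 - \theta I_n$) are saturated, and the downward induction using $A_0 f_l \equiv \theta f_l \bmod \langle f_1,\dots,f_{l-1}\rangle$ is correct. But the transfer to the canonical basis is never carried out, and it \emph{cannot} be carried out by the graded/determinant bookkeeping you describe, because everything you use up to that point sees only the flag type of $A_0$, while the canonical-basis claim is sensitive to the actual entries of $N$. Concretely, take $n = 2$ and
\[ A_0 = \begin{pmatrix} 0 & -\theta^2 \\ 1 & 2\theta \end{pmatrix}, \qquad (A_0 - \theta I_2)^2 = 0. \]
Here $A_0 e_1 = e_2$, so in $M/A_0M \simeq R_s/\theta^2R_s$ one has $\bar e_2 = 0$ and the image of the $\partial$-span of $\{e_1, e_2\}$ is the line $k_s \bar e_1$, of $k_s$-dimension $1 < 2 = \dim_{k_s} M/A_0M$; by Nakayama at the maximal ideal $(\theta)$, the canonical basis does not generate, even though your flag recipe produces a perfectly good $\partial$-basis (e.g. $f_1 = {}^t(-\theta, 1)$, $f_2 = e_1$). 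So any completion of your last step must inject degree information about $N$, not merely its associated graded. That is what the paper's proof does: it works with the canonical basis from the start, using the congruence $\partial_{\theta^m} \equiv \theta^m I_n$ modulo matrices of $\theta$-degree $\leq m-1$, which yields spanning by induction on $\max_i \deg_\theta x_i$ and independence by a leading-coefficient argument; note that this congruence is itself a degree bound on the powers of $N$ (it holds when $N$ has entries in $k_s$, as for all the modules $E_\alpha$ and $\cn$ appearing in the applications, but fails for the $A_0$ above). As it stands, your proposal proves the existence of an $R_s$-basis, which already re-proves the freeness from the first half, but not the stated assertion about the canonical basis.
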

\begin{proof}
 By the first point of the previous lemma, $\lie(E)(\rls)$ and $\rls^n$ are isomorphic as $k_s[\theta^{q^n}]$-modules.
 Thus, $\lie(E)(\rls)$ is a finitely generated $k_s[\theta^{q^n}]$-module.
 On the other hand, the action of an element $a \in R_s$ is the left multiplication by $aI_n + N$ where $N$ is a nilpotent matrix.
 Since $aI_n + N$ is an invertible matrix, $\lie(E)(\rls)$ is a torsion-free $R_s$-module.
 Moreover, the $k_s((\theta^{-q^n}))$-vector space generated by $\lie(E)(\rls)$ and $\ksinf$ is $\lsinf^n \simeq \lie(E)(\lsinf)$. 
 Therefore, $\lie(E)(\rls)$ is a free $R_s$-module of finite rank.
 Looking at the dimension as $K_s$-vector space, the rank is necessarily $n$.
 
 For the second assertion, denote by $e_i$ the $i^{th}$ vector of the canonical basis. 
 Firstly, we show that this family spans $\lie(E)(R_s)$.
 We proceed by induction on $\max\limits_{1 \leq i \leq n} \deg_\theta x_i$ where $(x_1, \dots, x_n)$ is in $R_s^n$.
 The case of degree $0$ is trivial because the action of an element of $k_s$ is the scalar multiplication.
 Now let $m$ be a positive integer and $(x_1, \dots, x_n)$ be a vector of $R_s^n$ such that $\max\limits_{1 \leq i \leq n} \deg_\theta x_i = m$.
 We can write
 \[ \begin{pmatrix} x_1 \\ \vdots \\ x_n \end{pmatrix} = \theta^m \begin{pmatrix} \zeta_1 \\ \vdots \\ \zeta_n \end{pmatrix}, \]
 where $\zeta_1, \dots, \zeta_n$ are elements of $k_s$.
 Since we have $\partial_{\theta^m} = \theta^m I_n \mod \theta^{m - 1}$, we get
 \[ \begin{pmatrix} x_1 \\ \vdots \\ x_n \end{pmatrix} = \partial_{\zeta_1 \theta^m} e_1 + \cdots + \partial_{\zeta_n \theta^m} e_n \mod \theta^{m - 1}. \]
 Thus we obtain the spanning property by induction.
 
 Finally, suppose that there exists $(a_1, \dots, a_n) \neq (0, \dots, 0)$ in $R_s^n$ such that 
 \[ \sum\limits_{i = 1}^n \partial_{a_i} e_i = 0. \]
 Let $d := \max\limits_{1 \leq i \leq n} \deg_\theta a_i$.
 Looking at the above equality modulo $\theta^d$, since $\partial_{\theta^d} = \theta^d I_n \mod \theta^{d - 1}$, we obtain that $a_i = 0$ if $\deg_\theta a_i = d $, thus necessarily all the $a_i$ are zero, \ie $e_1, \dots, e_n$ are linearly independent in $\lie(E)(R_s)$.
\end{proof}

\begin{prop} \label{prop-classmod} \
 \begin{enumerate}
  \item Set
  \[ H(E/\rls) := \frac{E(\lsinf)}{\expe(\lie(E)(\lsinf)) + E(\rls)}. \] 
  This is a finite dimensional $k_s$-vector space, thus a finitely generated $R_s$-module and a torsion $R_s$-module, called the \emph{class module}.
  \item The $R_s$-module $\expei(E(\rls))$ is an $R_s$-lattice in $\lie(E)(\rls)$.
 \end{enumerate}
\end{prop}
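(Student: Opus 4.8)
The plan is to derive both statements from two inputs already in place: the \emph{local isometry} of $\expe$ recorded after Proposition~\ref{prop-exp}, and the lattice criterion of Lemma~\ref{lem-caralattices}. First I would make the local isometry quantitative: with the constant $c$ introduced after Proposition~\ref{prop-exp}, for $x,x'$ with $v_\infty(x),v_\infty(x')>c$ one has $v_\infty(\expe(x)-\expe(x'))=v_\infty(x-x')$, so $\expe$ is injective and isometric on the ball $B(0,q^{-c})$. To see it is moreover \emph{surjective} onto that ball, given $y$ with $v_\infty(y)>c$ I would solve $\expe(x)=y$ by the fixed-point iteration attached to $\Phi(x):=y-\sum_{j\geq 1}e_j\tau^j(x)$, which the same estimate shows to be a contraction; completeness of $\lsinf^n$ (a finite product of the complete fields $L_{s,\nu}$) then yields the fixed point. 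Since $\expe$ is additive and $k_s$-linear, it maps every sufficiently small ball $B(z,q^{-c})$ onto $B(\expe(z),q^{-c})$, so $\expe$ is an \emph{open} map and its image contains the open ball $\{y:v_\infty(y)>c\}$.

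For part~(1), this openness shows that $\expe(\lie(E)(\lsinf))$, hence $\expe(\lie(E)(\lsinf))+E(\rls)$, is an open $k_s$-subspace of $E(\lsinf)$. Consequently $H(E/\rls)$ is the quotient of $E(\lsinf)/E(\rls)$ by the open subspace $\bigl(\expe(\lie(E)(\lsinf))+E(\rls)\bigr)/E(\rls)$. Now $\rls$ is an $R_s$-lattice of $\lsinf$, so $E(\rls)=\rls^n$ is a lattice of $E(\lsinf)=\lsinf^n$ for the coordinatewise $\ksinf$-structure; as the two conditions of Lemma~\ref{lem-caralattices} are purely $k_s$-linear and topological, every open $k_s$-subspace of $E(\lsinf)/E(\rls)$ has finite codimension. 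Hence $H(E/\rls)$ is finite dimensional over $k_s$. Being finite dimensional over $k_s$ it is finitely generated over $R_s\supseteq k_s$; and since $\theta$ acts $k_s$-linearly through $\phi_E(\theta)$ on a finite dimensional space, Cayley--Hamilton furnishes a nonzero element of $R_s=k_s[\theta]$ annihilating $H(E/\rls)$, so it is $R_s$-torsion.

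For part~(2), set $U:=\expei(E(\rls))=\{x\in\lie(E)(\lsinf):\expe(x)\in E(\rls)\}$, an $R_s$-submodule because $\expe$ is $R_s$-linear; I would verify the two conditions of Lemma~\ref{lem-caralattices}. Discreteness: since $\rls$ is discrete in $\lsinf$, after shrinking the ball we may assume $E(\rls)\cap B(0,q^{-c})=\{0\}$, and as $\expe$ is an isometry there, $x\in U\cap B(0,q^{-c})$ forces $\expe(x)=0$ and hence $x=0$; discreteness at $0$ suffices for the additive subgroup $U$. Finite codimension: the composite $\lie(E)(\lsinf)\xrightarrow{\expe}E(\lsinf)\to E(\lsinf)/E(\rls)$ has kernel exactly $U$, inducing an $R_s$-linear injection $\lie(E)(\lsinf)/U\hookrightarrow E(\lsinf)/E(\rls)$ with open image $W$ (again by openness of $\expe$). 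Because $\expe$ is open, this injection is a homeomorphism onto $W$, so every open subspace of $\lie(E)(\lsinf)/U$ corresponds to an open subspace of $E(\lsinf)/E(\rls)$ contained in $W$; by the codimension property of the lattice $E(\rls)$ (Lemma~\ref{lem-caralattices}) it has finite codimension in $E(\lsinf)/E(\rls)$, hence in $W\cong\lie(E)(\lsinf)/U$. Thus $U$ is an $R_s$-lattice of $\lie(E)(\lsinf)$; being a second lattice of the same $\ksinf$-vector space it is commensurable with $\lie(E)(\rls)$, which is the intended comparison.

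\textbf{Main obstacle.} The delicate point is the surjectivity and openness of $\expe$ near $0$. Unlike the classical archimedean exponential there is no compactness to exploit, since for $s\geq 1$ the residue fields $\mathbb F_\nu(t_1,\dots,t_s)$ are infinite and $\lsinf/\rls$ is not compact; this is precisely why the argument must pass through the $k_s$-dimension/codimension criterion of Lemma~\ref{lem-caralattices} rather than a ``discrete $+$ compact $=$ finite'' slogan. Making the contraction estimate uniform on a fixed ball, and confirming that $\expe$ is genuinely open so that the quotient topologies transfer, is where the real care lies; the remaining manipulations of lattices are formal.
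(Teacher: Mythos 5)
Your proof is correct and takes essentially the same route as the paper: both arguments rest on a small ball $V$ on which $\expe$ is an isometric bijection (the paper simply posits $\expe(V)=V$; you supply the standard fixed-point iteration) combined with the codimension criterion of Lemma~\ref{lem-caralattices}. Your injection $\lie(E)(\lsinf)/\expei(E(\rls))\hookrightarrow E(\lsinf)/E(\rls)$ with open image is the paper's short exact sequence in disguise, and your Cayley--Hamilton remark just makes explicit the paper's ``thus'' for finite generation and torsion.
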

\begin{proof}
 Let $V$ be an open neighbourhood of $0$ in $\lsinf^n$ on which $\expe$ acts as an isometry and such that $\expe(V) = V$.
 We have a natural surjection of $k_s$-vector spaces
 \[ \frac{\lsinf^n}{\rls^n+V} \twoheadrightarrow H(E/\rls). \]
 By proposition \ref{prop-lielattice}, the left hand side is a finite dimensional $k_s$-vector space, hence \emph{a fortiori} $H(E/\rls)$ too.
 
 Now, let us prove that $\expei(E(\rls))$ is an $R_s$-lattice in $\lie(E)(\rls)$.
 Since the kernel of $\expe$ and $\lie(E)(\rls)$ are discrete in $\lie(E)(\rls)$, so is $\expei(E(\rls))$.
 Let $V$ be an open neighbourhood of $0$ on which $\expe$ is isometric and such that $\expe(V) = V$. 
 The exponential map induces a short exact sequence of $k_s$-vector spaces
 \[ 0 \longrightarrow \frac{\lie(E)(\rls)}{\expei(E(\rls)) + V} \stackrel{\expe}{\longrightarrow} \frac{E(\lsinf)}{E(\rls)+V} \longrightarrow H(E/\rls) \longrightarrow 0. \]
 Since the two last $k_s$-vector spaces are of finite dimension, the first one is of finite dimension too; thus $\expei(E(\rls))$ satisfies the co-dimensional property. 
\end{proof}

An element $f \in \ksinf$ is \emph{monic} if
\[ f = \frac{1}{\theta^m} + \sum\limits_{i > m} x_i\frac{1}{\theta^i}, \]
where $m \in \mathbb Z$ and $x_i \in k_s$. 
For an $R_s$-module $M$ which is a finite dimensional $k_s$-vector space, we denote by $[M]_{R_s}$ the monic generator of the Fitting ideal of $M$.

Let $V$ be a finite dimensional $\ksinf$-vector space.
Let $M_1$ and $M_2$ be two $R_s$-lattices in $V$. 
There exists $\sigma \in \gl(V)$ such that $\sigma(M_1) = M_2$. 
Then, we define $[M_1 : M_2]_{R_s}$ to be the unique monic representative of $k_s^\times \det\sigma$.

The aim of the next section is to prove a class formula $\emph{\`a la Taelman}$ for Anderson modules:

\begin{thm} \label{thm-cnf}
 Let $E$ be an Anderson module over $\rls$.
 The infinite product
 \[ L(E/\rls) := \prod\limits_{\substack{\m \text{ maximal} \\ \text{ideal of } \ol}}\frac{[\lie(E)(\rls/\m\rls)]_{R_s}}{[E(\rls/\m\rls)]_{R_s}} \]
 converges in $\ksinf$. Furthermore, we have 
 \[ L(E/\rls) = [\lie(E)(\rls) : \expei(E(\rls))]_{R_s}[H(E/\rls)]_{R_s}. \]
\end{thm}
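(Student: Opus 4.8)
The plan is to prove the class formula by reducing the infinite Euler product to a finite-level computation and then identifying the two sides through a single lattice-index calculation. The formula $L(E/\rls) = [\lie(E)(\rls) : \expei(E(\rls))]_{R_s}[H(E/\rls)]_{R_s}$ has the shape "analytic $L$-value $=$ (regulator) $\times$ (class number)", so I expect the proof to mirror Taelman's original argument: express each local factor $[\lie(E)(\rls/\m\rls)]_{R_s}/[E(\rls/\m\rls)]_{R_s}$ as the ratio of two lattice covolumes, and observe that the product telescopes.

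\medskip
\noindent\textbf{Step 1: Local factors as indices.}
First I would fix a maximal ideal $\m$ of $\ol$ and analyze the single factor. The numerator $[\lie(E)(\rls/\m\rls)]_{R_s}$ measures the $R_s$-module structure of the tangent space reduced mod $\m$, while $[E(\rls/\m\rls)]_{R_s}$ measures the Anderson-module structure of the same underlying $k_s$-space $(\ol/\m)\otimes k_s$. Since both are finite-dimensional $k_s$-vector spaces with the $R_s$-action differing only by the "twist" $\phi_E$ versus $\partial$, the ratio should equal the Fitting-ideal generator of the cokernel of a suitable reduction of $\expe$. The key computation here is to show that each factor is congruent to $1$ modulo a high power of $\m$ (in the $\infty$-adic sense), guaranteeing convergence of the product in $\ksinf$; this follows from Lemma~\ref{lem-fang} controlling the growth of $A_0^{-j}$ together with the isometry property of $\expe$ established after Proposition~\ref{prop-exp}.

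\medskip
\noindent\textbf{Step 2: From local to global via the exact sequence.}
Next I would assemble the local data into a global statement. The crucial tool is the short exact sequence produced in the proof of Proposition~\ref{prop-classmod},
\[ 0 \longrightarrow \frac{\lie(E)(\rls)}{\expei(E(\rls)) + V} \stackrel{\expe}{\longrightarrow} \frac{E(\lsinf)}{E(\rls)+V} \longrightarrow H(E/\rls) \longrightarrow 0, \]
which relates the regulator lattice $\expei(E(\rls))$, the class module, and a quotient of $E(\lsinf)$. Taking $[\cdot]_{R_s}$ is multiplicative on short exact sequences of finite $R_s$-modules, so this immediately yields $[\lie(E)(\rls):\expei(E(\rls))]_{R_s}\,[H(E/\rls)]_{R_s}$ as the covolume of $E(\rls)$ inside $E(\lsinf)$ measured against $\lie(E)(\rls)$ pushed forward by $\expe$. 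I would then match this global covolume to the limit of partial Euler products by comparing, level by level over the degree filtration of $A$, the reduction $E(\rls)/aE(\rls)$ with $\lie(E)(\rls)/a\lie(E)(\rls)$ for $a\in A_+$ and letting $\deg a \to \infty$.

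\medskip
\noindent\textbf{Main obstacle.}
The hard part will be the passage from the finite sums/products to the $\infty$-adic limit while keeping the $R_s$-equivariance intact — concretely, showing that the telescoping of local indices converges to exactly the global covolume with no spurious boundary contribution at $\infty$. This requires a careful trace-formula or counting argument (in the style of Taelman's "$\theta$-version" of the class formula), using that $\expe$ is a locally isometric $R_s$-linear bijection on a neighborhood of $0$ and that $\lie(E)(\lsinf)$ is $n$-dimensional over $\ksinf$. I expect the delicate point to be controlling the norms uniformly in $s$, i.e.\ ensuring that introducing the extra variables $t_1,\dots,t_s$ and working over $\rls$ rather than $A$ does not disturb the convergence; the Gauss valuation being trivial on $k_s$ (Lemma~\ref{lem-csinftau1}) is what makes this work, and I would lean on it to reduce the equivariant computation to the classical $s=0$ case of \cite{Fan14}.
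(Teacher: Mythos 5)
Your high-level architecture is the right one (it is Taelman's: Euler product $\leftrightarrow$ covolume times class module, mediated by $\expe$), but the proposal is missing the actual engine of the proof, and two of your substitute steps would fail as stated. First, convergence: you claim each local factor is congruent to $1$ modulo a high power of $\m$ ``in the $\infty$-adic sense,'' deduced from Lemma~\ref{lem-fang} and the local isometry of $\expe$. Neither gives this: the isometry of $\expe$ is a statement at the infinite places and says nothing about the factor at a finite place $\m$, and Lemma~\ref{lem-fang} only controls powers of $A_0$. The elementary bound one actually gets is that $[\lie(E)(\rls/\m\rls)]_{R_s}$ and $[E(\rls/\m\rls)]_{R_s}$ are monic of the same degree, whence $v_\infty(\text{factor}-1)\geq 1$ uniformly --- not tending to infinity, hence insufficient for convergence of the product (for the Carlitz module one has $P/(P-1)$ and convergence is visible, but for a general Anderson module over $\rls$ no Weil-type estimate is available, especially with the Gauss valuation trivial on $k_s$). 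In the paper, convergence is not proved factor-by-factor at all: Lemma~\ref{lem-fittingasdet} rewrites each factor as $\det_{k_s[[Z]]}(1+\Theta \mid (\rls/\m\rls)^n)^{-1}|_{Z=\theta^{-1}}$ for the nuclear operator $\Theta=\sum_{n\geq 1}(\partial_\theta-\phi_\theta)\partial_\theta^{n-1}Z^n$, and Proposition~\ref{prop-traceformula} (via the localization Lemma~\ref{lem-localization} and Taelman's $S_{D,N}$ device, in which all but finitely many factors are trivial mod $Z^N$) identifies the product with the single determinant $\det_{k_s[[Z]]}(1+\Theta\mid(\lsinf/\rls)^n)|_{Z=\theta^{-1}}$; convergence falls out of this trace formula rather than feeding into it.

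Second, your local-to-global step misuses multiplicativity of $[\cdot]_{R_s}$: in the displayed sequence from Proposition~\ref{prop-classmod} the outer terms are quotients by an open $k_s$-subspace $V$, which are finite-dimensional $k_s$-vector spaces but not $R_s$-modules, so their Fitting ideals are not defined; and your proposed ``level-by-level comparison of $E(\rls)/aE(\rls)$ with $\lie(E)(\rls)/a\lie(E)(\rls)$ over the degree filtration'' is exactly the hard step left as a sketch. The paper closes this gap differently: it splits the genuine sequence of $R_s$-modules $0\rightarrow\lie(E)(\lsinf)/\expei(E(\rls))\rightarrow E(\lsinf)/E(\rls)\rightarrow H(E/\rls)\rightarrow 0$ (the left term is divisible and $R_s$ is principal), uses $\expe\partial_\theta\expei=\phi_\theta$ to write $1+\Theta=\frac{1-\gamma\partial_\theta\gamma^{-1}Z}{1-\partial_\theta Z}$ for the resulting isomorphism $\gamma$, invokes Proposition~\ref{prop-seriestangent} to see that $\gamma$ is infinitely tangent to the identity, and then evaluates the determinant by Proposition~\ref{prop-detformula} as $[\lie(E)(\rls):\expei(E(\rls))]_{R_s}[H(E/\rls)]_{R_s}$. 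Without reconstructing the nuclear-determinant formalism and these two propositions (or an equivalent), your outline does not yield either assertion of Theorem~\ref{thm-cnf}.
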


\section{Proof of the class formula} \label{sec-proof}

The proof is very close to ideas developed by Taelman in \cite{Tae12} so we will only recall some statements and point out differencies.

\subsection{Nuclear operators and determinants} \label{sec-nuclearopdet}

Let $k$ be a field and $V$ a $k$-vector space equipped with a non-archimedean norm $\| \cdot \|$. 
Let $\varphi$ be a continuous endomorphism of $V$. 
We say that $\varphi$ is \emph{locally contracting} if there exist an non empty open subspace $U \subseteq V$ and a real number $0<c<1$ such that $\|\varphi(u)\|\leq c\| u\|$ for all $u\in U$. 
Any such open subspace U which moreover satisfies $\varphi(U) \subseteq U$ is called a \emph{nucleus} for $\varphi$. 
Observe that any finite collection of locally contracting endomorphisms of $V$ has a common nucleus. 
Furthermore if $\varphi$ and $\phi$ are locally contracting, then so are the sum $\varphi + \psi$ and the composition $\varphi\psi$. 

For every positive integer $N$, we denote by $V[[Z]]/Z^N$ the $k[[Z]]/Z^N$-module $V \otimes_k k[[Z]]/Z^N$ and by $V[[Z]]$ the $k[[Z]]$-module $V[[Z]] := \varprojlim V[[Z]]/Z^N$ equipped with the limit topology.
Observe that any continuous $k[[Z]]$-linear endomorphism $\Phi \colon V[[Z]] \rightarrow V[[Z]]$ is of the form
\[ \Phi = \sum\limits_{n \geq 0} \varphi_n Z^n, \]
where the $\varphi_n$ are continuous endomorphisms of $V$. 
Similarly, any continuous $k[[Z]]/Z^n$-linear endomorphism of $V[[Z]]/Z^N$ is of the form
\[ \sum\limits_{n = 0}^{N - 1} \varphi_n Z^n. \]
We say that the continuous $k[[Z]]$-linear endomorphism $\Phi$ of $V[[Z]]$ (resp. of $V[[Z]]/Z^N$) is \emph{nuclear} if for all $n$ (resp. for all $n<N$), the
endomorphism $\varphi_n$ of $V$ is locally contracting.

From now on, we assume that for any open subspace $U$ of $V$, the $k$-vector space $V/U$ is of finite dimension.

Let $\Phi$ be a nuclear endomorphism of $V[[Z]]/Z^N$. Let $U_1$ and $U_2$ be common nuclei for the $\varphi_n$, $n<N$. 
Since Proposition 8 in \cite{Tae12} is valid in our context,
\[ \det\nolimits_{k[[Z]]/Z^N}\left( 1 + \Phi \mid V/U_i \otimes_k k[[Z]]/Z^N \right) \in k[[Z]]/Z^N \]
is independent of $i \in \{1,2\}$. 
We denote this determinant by
\[ \det\nolimits_{k[[Z]]/Z^N}( 1 + \Phi \mid V). \]
If $\Phi$ is a nuclear endomorphism of $V[[Z]]$, then we denote by $\det\nolimits_{k[[Z]]} (1 + \Phi \mid V)$ the unique power series that reduces to $\det\nolimits_{k[[Z]]/Z^N} (1 + \Phi \mid V)$ modulo $Z^N$ for every $N$.

Note that Proposition 9, Proposition 10, Theorem 2 and Corollary 1 of \cite{Tae12} are also valid in our context. 
We recall the statements for the convenience of the reader.

\begin{prop} \label{prop-multidet} \
\begin{enumerate}
 \item Let $\Phi$ be a nuclear endomorphism of $V[[Z]]$. 
 Let $W \subseteq V$ be a closed subspace such that $\Phi(W[[Z]]) \subseteq W[[Z]]$. 
 Then $\Phi$ is nuclear on $W[[Z]]$ and $(V/W)[[Z]]$, and
 \[ \det\nolimits_{k[[Z]]} (1 + \Phi \mid V) = \det\nolimits_{k[[Z]]} ( 1 + \Phi \mid W) \det\nolimits_{k[[Z]]} (1 + \Phi \mid V/W). \]
 \item Let $\Phi$ and $\Psi$ be nuclear endomorphisms of $V[[Z]]$. 
 Then $(1 + \Phi)(1 + \Psi) - 1$ is nuclear, and
 \[ \det\nolimits_{k[[Z]]} ((1 + \Phi)(1 + \Psi) \mid V) = \det\nolimits_{k[[Z]]} (1 + \Phi \mid V) \det\nolimits_{k[[Z]]} (1 + \Psi \mid V). \]
\end{enumerate}
\end{prop}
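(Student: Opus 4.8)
The plan is to prove both assertions truncation by truncation: for each $N$ I would reduce the identity modulo $Z^N$ to ordinary multiplicativity of the determinant over the commutative Artinian ring $k[[Z]]/Z^N$, and then pass to the projective limit over $N$. The two inputs I would lean on throughout are that the displayed determinant is independent of the chosen nucleus (Proposition 8 of \cite{Tae12}, valid here), and the observation, which I would make first, that every locally contracting endomorphism admits an \emph{open ball} $B(0,\rho)$ as a nucleus: if $\varphi$ contracts on an open subspace $U$ with constant $c$, then for a ball $B(0,\rho)\subseteq U$ one has $\|\varphi(u)\|\leq c\|u\|<\rho$, so $\varphi(B(0,\rho))\subseteq B(0,\rho)$. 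Consequently finitely many locally contracting endomorphisms share a common ball nucleus, and I would use such a ball at every step to keep the bookkeeping clean.

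For the first assertion I would begin with the nuclearity claims. Writing $\Phi=\sum_{n\geq 0}\varphi_n Z^n$, the hypothesis $\Phi(W[[Z]])\subseteq W[[Z]]$ says $\varphi_n(W)\subseteq W$ for all $n$, so each $\varphi_n$ restricts to $W$ and descends to an endomorphism $\bar\varphi_n$ of $V/W$. Taking a ball nucleus $B_V(0,\rho)$ for $\varphi_n$, the sets $B_W(0,\rho)=B_V(0,\rho)\cap W$ and $B_{V/W}(0,\rho)=(B_V(0,\rho)+W)/W$ are again ball nuclei for $\varphi_n|_W$ and $\bar\varphi_n$ (the contraction estimate on $V/W$ follows by choosing representatives of norm close to the quotient norm, which lie in $B_V(0,\rho)$ once the quotient norm is below $\rho$). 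Hence $\Phi$ is nuclear on $W[[Z]]$ and $(V/W)[[Z]]$. Then, fixing $N$ and a common ball nucleus $U=B_V(0,\rho)$ for $\varphi_0,\dots,\varphi_{N-1}$, the short exact sequence of finite-dimensional $k$-vector spaces
\[ 0 \longrightarrow W/(W\cap U) \longrightarrow V/U \longrightarrow V/(W+U) \longrightarrow 0 \]
is $\Phi$-stable after tensoring with $k[[Z]]/Z^N$, since $\Phi$ preserves $W$. As $U\cap W$ and $(U+W)/W$ are the ball nuclei just described, the three terms compute $\det\nolimits_{k[[Z]]/Z^N}(1+\Phi\mid W)$, $\det\nolimits_{k[[Z]]/Z^N}(1+\Phi\mid V)$ and $\det\nolimits_{k[[Z]]/Z^N}(1+\Phi\mid V/W)$ respectively; multiplicativity of the determinant along this exact sequence of free $k[[Z]]/Z^N$-modules yields the factorization modulo $Z^N$, and letting $N\to\infty$ gives the formula.

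For the second assertion I would write $(1+\Phi)(1+\Psi)-1=\Phi+\Psi+\Phi\Psi$, whose $Z^n$-coefficient is $\varphi_n+\psi_n+\sum_{i+j=n}\varphi_i\psi_j$; since sums and compositions of locally contracting endomorphisms are again locally contracting, each coefficient is locally contracting and the operator is nuclear. For the multiplicativity, fix $N$ and choose a single common ball nucleus $U$ for all of $\varphi_0,\dots,\varphi_{N-1},\psi_0,\dots,\psi_{N-1}$; it is then simultaneously a nucleus for $\Phi$, for $\Psi$ and for $\Phi+\Psi+\Phi\Psi$ modulo $Z^N$. By nucleus-independence, all three determinants modulo $Z^N$ may be computed on the one finite free module $V/U\otimes_k k[[Z]]/Z^N$, on which $1+\Phi$ and $1+\Psi$ are genuine endomorphisms with product $(1+\Phi)(1+\Psi)$; the identity then reduces to the usual $\det((1+\Phi)(1+\Psi))=\det(1+\Phi)\det(1+\Psi)$ over $k[[Z]]/Z^N$, and passing to the limit finishes the proof.

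The main obstacle in both parts is the handling of nuclei rather than any deep computation: one must guarantee that a single open subspace can be chosen compatibly—preserving $W$ in part~1, common to $\Phi$ and $\Psi$ in part~2—and that the induced intersection and quotient are again nuclei with the correct contraction behaviour for the quotient norm. Committing to ball nuclei from the outset is what makes this routine, since balls are stable under intersection with $W$ and passage to $V/W$; once this is secured, the rest is linear algebra over $k[[Z]]/Z^N$ and a standard passage to the projective limit.
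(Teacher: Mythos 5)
Your proof is correct and is essentially the argument the paper relies on: the paper establishes this proposition only by noting that Propositions 9 and 10 of \cite{Tae12} remain valid in its setting, and your truncation modulo $Z^N$, reduction to multiplicativity of determinants of finite free $k[[Z]]/Z^N$-modules via a common nucleus (intersected with $W$ and projected to $V/W$ in part 1, shared by $\Phi$ and $\Psi$ in part 2), and passage to the projective limit is precisely Taelman's proof transplanted to this context. Your preliminary observation that one may always take ball nuclei is the same device that underlies the common-nucleus and quotient-norm contraction steps there, so nothing is missing.
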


\begin{thm} \label{thm-commudetegaux} \
\begin{enumerate}
 \item Let $\varphi$ and $\psi$ be continuous $k$-linear endomorphisms of $V$ such that $\varphi$, $\varphi\psi$ and $\psi\varphi$ are locally contracting. 
 Then 
 \[ \det\nolimits_{k[[Z]]} (1 + \varphi \psi Z \mid V) = \det\nolimits_{k[[Z]]} (1 + \psi \varphi Z \mid V). \]
 \item Let $N\geq 1$ be an integer. 
 Let $\varphi$ and $\psi$ be continuous $k$-linear endomorphisms of $V$ such that all compositions $\varphi$, $\varphi \psi$, $\psi \varphi$, $\varphi^2$, etc. in $\varphi$ and $\psi$ containing at least one endomorphism $\varphi$ and at most $N - 1$ endomorphisms $\psi$ are locally contracting. 
 Let $\Delta = \sum\limits_{n = 1}^{N - 1} \gamma_n Z^n$ such that
 \[ 1 + \Delta = \frac{1 - (1 + \varphi)\psi Z}{1 - \psi(1 + \varphi)Z} \mod Z^N. \]
 Then $\Delta$ is a nuclear endomorphism of $V[[Z]]$ and
 \[ \det\nolimits_{k[[Z]]} (1 + \Delta \mid V) = 1 \mod Z^N. \]
\end{enumerate}
\end{thm}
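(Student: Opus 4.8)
The plan is to follow the classical proof that $\det(1+AB)=\det(1+BA)$, but to route everything through finite-dimensional quotients, so as never to use that $\psi$ (nor, in (2), the operator $1+\varphi$) is locally contracting --- only that the relevant composites are. For (1) I would first reduce to finite dimensions. Using that $\varphi$ is locally contracting, choose a sufficiently small common nucleus $U$ for $\varphi$, $\varphi\psi$ and $\psi\varphi$, arranged so that $W:=\psi^{-1}(U)$ --- open, since $\psi$ is continuous --- is again a nucleus for $\varphi\psi$; here the contraction of $\varphi$ is exactly what compensates for the mere continuity of $\psi$ and makes $\varphi\psi$ contracting on all of $W$. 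Although $\psi$ need not preserve $U$, one has $\psi\varphi(U)\subseteq U$ and $\psi(W)\subseteq U$, so $\varphi$ and $\psi$ induce $k$-linear maps $\overline\varphi\colon V/U\to V/W$ and $\overline\psi\colon V/W\to V/U$ between the two finite-dimensional spaces, with $\overline\psi\,\overline\varphi=\overline{\psi\varphi}$ on $V/U$ and $\overline\varphi\,\overline\psi=\overline{\varphi\psi}$ on $V/W$. The Sylvester (Weinstein--Aronszajn) identity in finite dimensions then gives $\det(1+\overline\psi\,\overline\varphi\,Z\mid V/U)=\det(1+\overline\varphi\,\overline\psi\,Z\mid V/W)$, and by the nucleus-independence of the determinant the two sides are precisely $\det\nolimits_{k[[Z]]}(1+\psi\varphi Z\mid V)$ and $\det\nolimits_{k[[Z]]}(1+\varphi\psi Z\mid V)$. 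The same argument proves the cyclicity in the form needed below: if $\alpha$ is locally contracting, $\beta$ is continuous, and both $\alpha\beta$ and $\beta\alpha$ are nuclear, then $\det(1+\alpha\beta)=\det(1+\beta\alpha)$, even when $\beta$ is a $Z$-dependent operator (the reduction and Sylvester are valid over the base $k[[Z]]/Z^N$ as well).

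For (2) I would first make $\Delta$ explicit. Setting $\rho:=(1-\psi Z)^{-1}$ and factoring $(1-\psi Z)$ out on the right of both numerator and denominator, the common factor cancels and one gets $1+\Delta\equiv(1-\varphi\psi\rho Z)(1-\psi\varphi\rho Z)^{-1}\pmod{Z^N}$. Expanding $\rho=\sum_m(\psi Z)^m$, every monomial occurring in $\varphi\psi\rho Z$, in $\psi\varphi\rho Z$, and in the inverse $(1-\psi\varphi\rho Z)^{-1}$ contains at least one $\varphi$ and at most $N-1$ factors $\psi$; by hypothesis each such monomial is locally contracting, so all three operators are nuclear and, in particular, each $\gamma_n$ is locally contracting and $\Delta$ is nuclear. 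By the multiplicativity of Proposition \ref{prop-multidet}, $\det(1+\Delta)=\det(1-\varphi\psi\rho Z)\,\det(1-\psi\varphi\rho Z)^{-1}$. The cyclicity from (1), applied to move $\varphi$ past $\psi\rho$ and then $\rho$ past $\psi\varphi$, combined with the commutation $\rho\psi=\psi\rho$, yields $\det(1-\varphi\psi\rho Z)=\det(1-\psi\rho\varphi Z)=\det(1-\rho\psi\varphi Z)=\det(1-\psi\varphi\rho Z)$, whence $\det(1+\Delta)\equiv 1\pmod{Z^N}$.

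The main obstacle throughout is that neither $\psi$ nor $1+\varphi$ is locally contracting, so the naive route --- forming the block matrix $\left(\begin{smallmatrix}1&\varphi\\\psi Z&1\end{smallmatrix}\right)$ and splitting its determinant via Proposition \ref{prop-multidet} --- is unavailable: the off-diagonal perturbation $\left(\begin{smallmatrix}0&0\\\psi Z&0\end{smallmatrix}\right)$ is not nuclear, so its determinant is not even defined. Everything hinges on keeping $\psi$ bundled inside the nuclear composites $\varphi\psi,\psi\varphi$ (resp. $\varphi\psi\rho,\psi\varphi\rho$); in (1) this is what forces the passage through two \emph{distinct} quotients $V/U$ and $V/W$, and the only place the hypothesis ``$\varphi$ locally contracting'' is genuinely used is in ensuring that $W=\psi^{-1}(U)$ is an honest nucleus for $\varphi\psi$. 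I expect the delicate points to be exactly this compatible choice of nuclei with the required contraction on all of $W$, and the justification that the cyclicity of (1) remains valid for the $Z$-dependent operator $\psi\rho$ over $k[[Z]]/Z^N$.
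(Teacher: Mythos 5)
Your overall architecture is sound, and it is essentially the route of the source that the paper itself relies on for this statement: the paper gives no proof here, simply noting that Theorem 2 of \cite{Tae12} remains valid in this context, and Taelman's argument is exactly the kind of reduction you propose. Your part (2) is correct as computed: factoring $(1-\psi Z)$ out on the right of numerator and denominator does give $1+\Delta \equiv (1-\varphi\psi\rho Z)(1-\psi\varphi\rho Z)^{-1} \bmod Z^N$ with $\rho=(1-\psi Z)^{-1}$; every word occurring in the coefficients of $\varphi\psi\rho Z$, $\psi\varphi\rho Z$ and $(1-\psi\varphi\rho Z)^{-1}-1$ contains at least one $\varphi$ and at most $N-1$ letters $\psi$, so nuclearity holds by hypothesis, and the multiplicativity of Proposition \ref{prop-multidet} together with your $Z$-dependent cyclicity (which is indeed provable by the same two-quotient Sylvester argument over $k[[Z]]/Z^N$, since mod $Z^N$ only finitely many coefficients $\beta_m$ intervene) closes the argument.

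However, one step in your part (1) is genuinely false as written: the claim that $W=\psi^{-1}(U)$ is a nucleus for $\varphi\psi$ because ``the contraction of $\varphi$ compensates for the mere continuity of $\psi$.'' For $w\in W$ you only get $\|\varphi\psi(w)\|\leq c\,\|\psi(w)\|$ (legitimate, since $\psi(w)\in U$), but continuity of $\psi$ gives \emph{no} comparison between $\|\psi(w)\|$ and $\|w\|$: the valuation on $k$ is trivial, so a continuous $k$-linear map need not satisfy any norm bound, and $W\not\subseteq U$ in general, so the required inequality $\|\varphi\psi(w)\|\leq c'\|w\|$ on all of $W$ does not follow. The repair is immediate and lies inside your own setup: take $W:=U\cap\psi^{-1}(U)$, with $U$ a common nucleus for $\varphi$, $\varphi\psi$, $\psi\varphi$. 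Then $W\subseteq U$ gives the contraction of $\varphi\psi$ on $W$ --- coming from the hypothesis that $\varphi\psi$ is \emph{itself} locally contracting, not from any compensation by $\varphi$; invariance $\varphi\psi(W)\subseteq W$ follows from $\varphi(U)\subseteq U$, $\psi(W)\subseteq U$ and $\psi\varphi(U)\subseteq U$; and the induced maps $\bar\varphi\colon V/U\to V/W$ (using $\varphi(U)\subseteq U$ and $\psi\varphi(U)\subseteq U$) and $\bar\psi\colon V/W\to V/U$ exist, so the finite-dimensional Sylvester identity applies as you intended, the two quotients computing precisely the two nuclear determinants. This also corrects your closing diagnosis: the hypothesis ``$\varphi$ locally contracting'' is spent on the invariance computations $\varphi(U)\subseteq U$, while the hypothesis ``$\varphi\psi$ locally contracting'' is what supplies the contraction on $W$; the same intersection trick $W=U\cap\bigcap_{m<N}\beta_m^{-1}(U)$ is what makes your $Z$-dependent cyclicity lemma rigorous.
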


\subsection{Taelman's trace formula} \label{sec-traceformula}

Let $L$ be a finite extension of $K$ and $E$ be the Anderson module given by
 \[ \begin{array}{cccc} 
    \phi \colon & R_s & \longrightarrow & M_n(\rls)\{\tau\} \\
    & \theta & \longmapsto & \displaystyle\sum\limits_{j=0}^r A_j \tau^j \\
   \end{array} \]
for some $A_0,\dots,A_r \in M_n(\rls)$ such that $(A_0 - \theta I_n)^n = 0$. 
Let $M_n(\rls)\{\tau\}[[Z]]$ be the ring of formal power series in $Z$ with coefficients in $M_n(\rls)\{\tau\}$, the variable $Z$ being central.
 
We set 
\[ \Theta := \sum\limits_{n\geq 1} (\partial_\theta-\phi_\theta) \partial_\theta^{n-1} Z^n \in M_n(\rls)\{\tau\}[[Z]]. \]

\begin{lem} \label{lem-fittingasdet}
 Let $\m$ be a maximal ideal of $\ol$. 
 In $\ksinf$, the following equality holds: 
 \[ \frac{[\lie(E)(\rls/\m\rls)]_{R_s}}{[E(\rls/\m\rls)]_{R_s}} = \det\nolimits_{k_s[[Z]]} \left( 1 + \Theta \mid (\rls/\m\rls)^n \right)^{-1} \mid_{Z = \theta^{-1}}. \]
\end{lem}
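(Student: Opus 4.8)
The plan is to reduce both sides to ordinary linear algebra over $k_s$ and to match them through a telescoping identity engineered into $\Theta$. Write $V := (\rls/\m\rls)^n$, a finite-dimensional $k_s$-vector space, carrying two $R_s$-module structures: the one defining $\lie(E)$, in which $\theta$ acts by $\partial_\theta = A_0 \bmod \m$, and the one defining $E$, in which $\theta$ acts by $\phi_\theta = \sum_j A_j\tau^j \bmod \m$. Both are genuine $k_s$-linear endomorphisms of $V$: since $\tau$ is a morphism of $k_s$-algebras it induces the $q$-power Frobenius of the finite $k_s$-algebra $\rls/\m\rls$, which is $k_s$-linear, and the $A_j$ act by multiplication. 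Because the Gauss valuation is trivial on $k_s$, the subspace $\{0\}$ is open in $V$; it is therefore a common nucleus for every endomorphism, the standing finiteness hypothesis holds trivially, and the nuclear determinant $\det\nolimits_{k_s[[Z]]}(1+\Phi \mid V)$ collapses directly from its definition to the ordinary determinant of the $k_s[[Z]]$-linear operator $1+\Phi$ on the finite free module $V\otimes_{k_s}k_s[[Z]]$.

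First I would record the dictionary between Fitting generators and characteristic polynomials. Since $R_s = k_s[\theta]$ is a principal ideal domain, a finite-dimensional $k_s$-vector space $W$ equipped with an $R_s$-action given by a $k_s$-linear operator $u$ is a torsion $R_s$-module whose $0$-th Fitting ideal is generated by the characteristic polynomial $\chi_u(X) = \det\nolimits_{k_s}(X\,\mathrm{Id} - u)$, so that $[W]_{R_s} = \chi_u(\theta)$. Applying this to the two structures on $V$ gives $[\lie(E)(\rls/\m\rls)]_{R_s} = \chi_{\partial_\theta}(\theta)$ and $[E(\rls/\m\rls)]_{R_s} = \chi_{\phi_\theta}(\theta)$.

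The key computation is the telescoping identity. Summing the geometric series (the variable $Z$ being central and $1 - \partial_\theta Z$ being invertible $Z$-adically) gives, in $M_n(\rls)\{\tau\}[[Z]]$ and hence modulo $\m$ as an identity of operators on $V\otimes_{k_s} k_s[[Z]]$,
\[ 1 + \Theta = 1 + (\partial_\theta - \phi_\theta)\,Z\,(1 - \partial_\theta Z)^{-1} = (1 - \phi_\theta Z)(1 - \partial_\theta Z)^{-1}. \]
By the multiplicativity of the determinant (Proposition \ref{prop-multidet}(2)), applied to the factorization $1+\Theta = (1-\phi_\theta Z)\cdot(1-\partial_\theta Z)^{-1}$,
\[ \det\nolimits_{k_s[[Z]]}(1 + \Theta \mid V) = \frac{\det\nolimits_{k_s[[Z]]}(1 - \phi_\theta Z \mid V)}{\det\nolimits_{k_s[[Z]]}(1 - \partial_\theta Z \mid V)}. \]

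Finally I would evaluate. In the finite-dimensional setting each factor is the reversed characteristic polynomial, $\det\nolimits_{k_s}(1 - u Z \mid V) = Z^{d}\,\chi_u(Z^{-1})$ with $d := \dim_{k_s}V$. Setting $Z = \theta^{-1}$ is legitimate: the power series $\det\nolimits_{k_s[[Z]]}(1+\Theta\mid V)$ has all coefficients in $k_s$, hence of valuation $\geq 0$, while $v_\infty(\theta^{-1}) = 1 > 0$, so the series converges in $\ksinf$; moreover its denominator specializes to $\theta^{-d}\chi_{\partial_\theta}(\theta) = \theta^{-d}[\lie(E)(\rls/\m\rls)]_{R_s}$, a nonzero element of $\ksinf$ because $\chi_{\partial_\theta}$ is monic of degree $d$. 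The factors $\theta^{-d}$ cancel and
\[ \det\nolimits_{k_s[[Z]]}(1 + \Theta \mid V)\big|_{Z = \theta^{-1}} = \frac{\chi_{\phi_\theta}(\theta)}{\chi_{\partial_\theta}(\theta)} = \frac{[E(\rls/\m\rls)]_{R_s}}{[\lie(E)(\rls/\m\rls)]_{R_s}}, \]
so taking reciprocals yields the stated equality. The main obstacle is not a hard estimate but the careful bookkeeping that the nuclear determinant genuinely reduces to an ordinary one on the finite local piece and that the substitution $Z=\theta^{-1}$ is analytically valid; the algebraic heart, namely recognizing that $\Theta$ is built precisely so that $1+\Theta = (1-\phi_\theta Z)(1-\partial_\theta Z)^{-1}$, is short once one sees it.
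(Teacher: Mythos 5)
Your proof is correct and is exactly the computation the paper has in mind: the paper dismisses this lemma with ``an easy computation using the definition of Fitting ideal and of $\Theta$,'' and your write-up — the identity $1+\Theta=(1-\phi_\theta Z)(1-\partial_\theta Z)^{-1}$, the collapse of the nuclear determinant to the ordinary one on the finite-dimensional discrete space $(\rls/\m\rls)^n$, the identification $[W]_{R_s}=\chi_u(\theta)$, and the reversed-characteristic-polynomial evaluation at $Z=\theta^{-1}$ — is precisely that computation spelled out. No discrepancies with the paper's approach; the bookkeeping on convergence of the substitution and on monic normalization is handled correctly.
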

\begin{proof}
 It is an easy computation using the definition of Fitting ideal and of $\Theta$.  
\end{proof}

Let $S$ be a finite set of places of $L$ containing $S_\infty(L)$.
Denote by $\os$ the ring of regular functions outside $S$.
In particular $\ol \subseteq \os$.
Let $\rss$ be the subring of $L_s$ generated by $\os$ and $k_s$.
For example, if $S = S_\infty(L)$, we have $\rss = \rls$.

Let $\p$ be a maximal ideal of $\ol$ which is not in $S$.
The natural inclusion $\ol \hookrightarrow \os$ induces an isomorphism $\rls/\p\rls \stackrel{\sim}{\longrightarrow} \rss/\p\rss$.
By the previous lemma, we obtain
\begin{equation} \label{eqn-fittingasdet} 
\frac{[\lie(E)(\rls/\p\rls)]_{R_s}}{[E(\rls/\p\rls)]_{R_s}} = \det\nolimits_{k_s[[Z]]} \left( 1 + \Theta \mid (\rss/\p\rss)^n \right)^{-1} \mid_{Z = \theta^{-1}}. 
\end{equation}

Denote by $\lsp$ the $\p$-adic completion of $L_s$, \ie the completion of $L_s$ with respect to the valuation $v_\p$ defined on $L[t_1,\dots,t_s]$ by
\[ v_\p \left(\sum\limits_{i_1,\dots,i_s \in \mathbb N} \alpha_{i_1,\dots,i_s} t_1^{i_1} \cdots t_s^{i_s} \right) 
 := \inf\limits_{i_1,\dots,i_s \in \mathbb N} \left\{ v_\p(\alpha_{i_1,\dots,i_s}) \right\}, \]
where $v_\p$ is the normalized $\p$-adic valuation on $L$.
Denote by $\osp$ the valuation ring of $\lsp$.
By the strong approximation theorem, for any $n > 0$, there exists $\pi_n \in L$ such that $v_\p(\pi_n) = -n$ and $v(\pi_n) \geq 0$ for all $v \notin S \cup \p$.
Thus, we have
\begin{equation} \label{eqn-lsprss}
 \lsp = \osp + \rsps \quad \text{and} \quad \rss = \osp + \rsps. 
\end{equation}

Finally, denote by $\lss$ the product of the completions of $L_s$ with respect to places of $S$.
For example, if $S = S_\infty(L)$, we have $\lss = \lsinf$.

%
%

Recall that $\rss$ is a Dedekind domain, discrete in $\lss$ and such that every open subspace of $\lss/\rss$ is of finite co-dimension.
Observe also that any element of $M_n(\rss)\{\tau\}$ induces a continuous $k_s$-linear endomorphism of $(\lss/\rss)^n$ which is locally contracting.
In particular, the endomorphism $\Theta$ is a nuclear operator of $(\lss/\rss)^n[[Z]]$.

\begin{lem} \label{lem-localization}
 Let $\p$ be a maximal ideal of $\ol$ which is not in $S$. Then
 \[ \det\nolimits_{k_s[[Z]]} \left( 1 + \Theta \mid (\rss/\p\rss)^n \right) = 
 \frac{ \det\nolimits_{k_s[[Z]]} \left( 1 + \Theta \mid \left( \frac{\lss\times\lsp}{\rsps} \right)^n \right) }{ \det\nolimits_{k_s[[Z]]} \left( 1 + \Theta \mid \left(\frac{\lss}{\rss} \right)^n \right) }. \]
\end{lem}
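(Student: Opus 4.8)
The plan is to realize the three determinants as instances of the multiplicativity property (Proposition~\ref{prop-multidet}, part~1) applied to suitable short exact sequences of $k_s$-vector spaces on which $\Theta$ acts compatibly. The starting point is the decomposition \eqref{eqn-lsprss}, namely $\rss = \osp + \rsps$ and $\lsp = \osp + \rsps$. These identities suggest comparing the quotient $(\lss \times \lsp)/\rsps$ with $\lss/\rss$, the ``difference'' between them being governed by the local data at $\p$, which should account precisely for the factor $(\rss/\p\rss)^n$.

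Concretely, I would first observe that $\Theta$, being induced by an element of $M_n(\rss)\{\tau\}$, acts as a nuclear operator on each of the three relevant spaces $(\lss/\rss)^n$, $((\lss \times \lsp)/\rsps)^n$, and $(\rss/\p\rss)^n$, so that all three determinants are defined; this uses the remarks immediately preceding the lemma together with the fact that $\rss/\p\rss$ is finite over $k_s$. Next, I would construct a short exact sequence of $k_s$-vector spaces relating these quotients. The natural candidate, using \eqref{eqn-lsprss}, is a sequence whose middle term is $(\lss \times \lsp)/\rsps$ and whose outer terms are $\lss/\rss$ and $\rss/\p\rss$. Indeed, the diagonal embedding $\rss \hookrightarrow \lss \times \lsp$ and the decompositions above should yield
\[
 0 \longrightarrow \frac{\rss}{\rsps} \longrightarrow \frac{\lss \times \lsp}{\rsps} \longrightarrow \frac{\lss}{\rss} \longrightarrow 0,
\]
after identifying $\rss/\rsps$ with $\osp/(\osp \cap \rsps)$, which in turn is isomorphic to $\rss/\p\rss$ by the local description of $\p$ in $\osp$. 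Granting such an identification, Proposition~\ref{prop-multidet}(1) applied to the $\Theta$-stable filtration gives the multiplicative relation
\[
 \det\nolimits_{k_s[[Z]]}\!\left( 1 + \Theta \mid \left(\tfrac{\lss \times \lsp}{\rsps}\right)^n \right) = \det\nolimits_{k_s[[Z]]}\!\left( 1 + \Theta \mid (\rss/\p\rss)^n \right) \cdot \det\nolimits_{k_s[[Z]]}\!\left( 1 + \Theta \mid \left(\tfrac{\lss}{\rss}\right)^n \right),
\]
which is exactly the claimed formula after dividing.

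The step I expect to be the main obstacle is verifying that the maps in the short exact sequence are genuine continuous $k_s$-linear maps with $\Theta$-stable terms, and in particular pinning down the identification of the ``new'' quotient at $\p$ with $(\rss/\p\rss)^n$ as $R_s$-modules rather than merely as $k_s$-vector spaces. This requires the two equalities in \eqref{eqn-lsprss} and a careful bookkeeping of which subspace is closed and which sum is direct; the term $\rsps$ must be shown to be closed in $\lss \times \lsp$, and the quotient $(\lss \times \lsp)/\rsps$ must be shown to have the finite-codimension property so that all determinants are legitimately defined. Once these topological and module-theoretic points are settled, the conclusion follows formally from Proposition~\ref{prop-multidet}(1), so the essential content lies in setting up the exact sequence correctly from the approximation-theoretic decompositions already established.
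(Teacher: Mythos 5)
Your overall strategy---build a $\Theta$-stable short exact sequence out of the approximation identities \eqref{eqn-lsprss} and apply multiplicativity of nuclear determinants (Proposition \ref{prop-multidet}(1))---is exactly the strategy of the paper's proof (which follows lemma 1 of \cite{Tae12}), but your identification of the kernel is wrong, and that is precisely where the content of the lemma lies. First, $\rss \subseteq \rsps$, so ``$\rss/\rsps$'' is not a quotient at all. Moreover, since $\osp \cap \rsps = \rss$ (this intersection is what the second identity of \eqref{eqn-lsprss} should say; the ``$+$'' there is a typo), your proposed identification $\osp/(\osp \cap \rsps) \simeq \rss/\p\rss$ amounts to asserting $\osp/\rss \simeq \rss/\p\rss$, which is false: $\rss/\p\rss$ is a finite-dimensional $k_s$-vector space, whereas $\osp/\rss$ is infinite dimensional. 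The correct sequence is obtained as follows: from $\lsp = \osp + \rsps$ and $\rss = \osp \cap \rsps$ one gets $\lss \times \lsp = (\lss \times \osp) + \rsps$ (diagonal embedding of $\rsps$) and hence an isomorphism $(\lss\times\lsp)/\rsps \simeq (\lss\times\osp)/\rss$ with $\rss$ embedded diagonally; projecting onto the factor $\lss$ then yields the short exact sequence
\[ 0 \longrightarrow \osp^n \longrightarrow \left( \frac{\lss\times\lsp}{\rsps} \right)^{\!n} \longrightarrow \left( \frac{\lss}{\rss} \right)^{\!n} \longrightarrow 0. \]
So the kernel is the full, infinite-dimensional local ring $\osp^n$, not a copy of $(\rss/\p\rss)^n$.

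The missing idea is the second step, which converts this kernel into the finite object appearing in the statement: one must prove
\[ \det\nolimits_{k_s[[Z]]}\left( 1 + \Theta \mid \osp^n \right) = \det\nolimits_{k_s[[Z]]}\left( 1 + \Theta \mid (\rss/\p\rss)^n \right). \]
This holds because the coefficient of $Z^m$ in $\Theta$ is $(\partial_\theta - \phi_\theta)\partial_\theta^{m-1}$, which has positive degree in $\tau$ and coefficients in $M_n(\rls) \subseteq M_n(\osp)$; such operators map $\p\osp^n$ into $\p^{q}\osp^n$, hence are locally contracting on $\osp^n$ with $\p\osp^n$ as a common nucleus. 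By the very definition of the nuclear determinant, it is therefore computed on the quotient $\osp^n/\p\osp^n \simeq (\rss/\p\rss)^n$. With this step supplied, and your exact sequence replaced by the correct one above, Proposition \ref{prop-multidet}(1) gives
\[ \det\nolimits_{k_s[[Z]]}\left( 1 + \Theta \mid \left( \tfrac{\lss\times\lsp}{\rsps} \right)^{\!n} \right) = \det\nolimits_{k_s[[Z]]}\left( 1 + \Theta \mid (\rss/\p\rss)^n \right) \det\nolimits_{k_s[[Z]]}\left( 1 + \Theta \mid \left( \tfrac{\lss}{\rss} \right)^{\!n} \right), \]
which is the lemma. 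As written, your argument has a genuine gap: the finite-dimensional kernel you posit does not exist, and the equality of determinants between $\osp^n$ and $(\rss/\p\rss)^n$---the one point where nuclearity is genuinely used rather than formal multiplicativity---is nowhere addressed.
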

\begin{proof}
 The proof is the same as that of lemma 1 of \cite{Tae12}, using equalities \eqref{eqn-lsprss}. 
\end{proof}

\begin{prop} \label{prop-traceformula}
 The following equality holds in $\ksinf$:
 \[ L(E/\rls) = \det\nolimits_{k_s[[Z]]} \left( 1 + \Theta \mid (\lsinf/\rls)^n \right) \mid_{Z = \theta^{-1}}. \]
 In particular, $L(E/\rls)$ converges in $\ksinf$.
 
\end{prop}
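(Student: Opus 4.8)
The plan is to establish the identity first for an arbitrary finite set $S$ of places of $L$ containing $S_\infty(L)$, and then to let $S$ exhaust the maximal ideals of $\ol$. The finite version is obtained by telescoping the localization formula of Lemma \ref{lem-localization}, while the passage to the limit (which simultaneously yields the convergence of the infinite product) rests on showing that a tail determinant tends to $1$.

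First I would fix $S = S_\infty(L) \cup \{\p_1,\dots,\p_m\}$ and interpolate a chain $S_0 = S_\infty(L) \subset S_1 \subset \cdots \subset S_m = S$ with $S_{i+1} = S_i \cup \{\p_{i+1}\}$. Since $L_{s,S_{i+1}} = L_{s,S_i} \times L_{s,\p_{i+1}}$ and $R_{S_{i+1},s} = R_{S_i \cup \{\p_{i+1}\},s}$, Lemma \ref{lem-localization} applied at each step gives
\[ \det\nolimits_{k_s[[Z]]}\left( 1 + \Theta \mid (R_{S_i,s}/\p_{i+1}R_{S_i,s})^n \right) = \frac{ \det\nolimits_{k_s[[Z]]}\left( 1 + \Theta \mid (L_{s,S_{i+1}}/R_{S_{i+1},s})^n \right) }{ \det\nolimits_{k_s[[Z]]}\left( 1 + \Theta \mid (L_{s,S_i}/R_{S_i,s})^n \right) }. \]
Taking the product over $i = 0,\dots,m-1$, the right-hand side telescopes to $\det\nolimits_{k_s[[Z]]}(1 + \Theta \mid (\lss/\rss)^n)$ divided by $\det\nolimits_{k_s[[Z]]}(1 + \Theta \mid (\lsinf/\rls)^n)$. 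Using the canonical isomorphism $\rls/\p_{i+1}\rls \simeq R_{S_i,s}/\p_{i+1}R_{S_i,s}$ together with \eqref{eqn-fittingasdet} to rewrite each factor on the left, then inverting and evaluating at $Z = \theta^{-1}$, I obtain
\[ \prod\limits_{i=1}^{m} \frac{[\lie(E)(\rls/\p_i\rls)]_{R_s}}{[E(\rls/\p_i\rls)]_{R_s}} = \frac{ \det\nolimits_{k_s[[Z]]}\left( 1 + \Theta \mid (\lsinf/\rls)^n \right)\mid_{Z = \theta^{-1}} }{ \det\nolimits_{k_s[[Z]]}\left( 1 + \Theta \mid (\lss/\rss)^n \right)\mid_{Z = \theta^{-1}} }. \]

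Finally, I would let $S$ grow so that $\{\p_1,\dots,\p_m\}$ runs over all maximal ideals of $\ol$. The left-hand side is then precisely the sequence of partial products defining $L(E/\rls)$, while the numerator on the right is a fixed element of $\ksinf$ independent of $S$. Hence the whole statement reduces to proving that the tail determinant $\det\nolimits_{k_s[[Z]]}(1 + \Theta \mid (\lss/\rss)^n)\mid_{Z = \theta^{-1}}$ tends to $1$ in $\ksinf$ as $S$ grows. This limit is the main obstacle. One useful reduction is that the coefficients of this determinant lie in $k_s$, on which the Gauss valuation is trivial, so after substituting $Z = \theta^{-1}$ the term of $Z$-degree $j$ has valuation exactly $j$; it therefore suffices to show that the lowest-degree nonzero coefficient of the tail determinant is pushed to $+\infty$ as $S$ grows. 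The mechanism is that $\rss$ becomes dense in $\lss$ while the operator $\Theta$ is built from the Frobenius part $A_1\tau + \cdots + A_r\tau^r$ of $\phi_\theta - \partial_\theta$, which is locally contracting; enlarging $S$ forces the nuclei controlling the low-degree coefficients to recede to ever higher valuation. Quantifying this vanishing, following the convergence estimate in Taelman's proof of his trace formula, yields simultaneously the convergence of $L(E/\rls)$ and the asserted equality.
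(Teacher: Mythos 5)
Your formal skeleton is the same as the paper's: the telescoping of Lemma \ref{lem-localization} along a chain $S_\infty(L)=S_0\subset S_1\subset\cdots\subset S_m$ is exactly the manipulation the paper performs (it applies the lemma successively to $\rss$, $R_{S\cup\{\m_1\},s}$, $R_{S\cup\{\m_1,\m_2\},s}$, etc.), and your reformulation --- that everything reduces to showing $\det\nolimits_{k_s[[Z]]}\left(1+\Theta\mid(\lss/\rss)^n\right)\equiv 1 \bmod Z^N$ for every $N$ once the finite set $S$ is large enough --- is a correct and faithful repackaging of what the paper proves. The observation that the coefficients lie in $k_s$, on which the Gauss valuation is trivial, so that the $Z^j$ term contributes valuation exactly $j$ when its coefficient is nonzero, is also correct: the needed statement is that the low-degree coefficients are eventually \emph{equal to zero}, i.e.\ triviality mod $Z^N$.

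The gap is in your final paragraph, where the proposed mechanism for this vanishing is wrong on both counts. First, $\rss$ never ``becomes dense in $\lss$'': it is discrete in $\lss$ for every $S$ (the paper recalls precisely this just before Lemma \ref{lem-localization}); strong approximation gives density only in the product of the completions at the \emph{finite} places of $S$. Second, receding nuclei cannot be the mechanism: the determinant is independent of the choice of nucleus, and enlarging $S$ changes nothing at the infinite places, where the contraction rates of $A_1\tau+\cdots+A_r\tau^r$ are fixed --- indeed for $S=S_\infty(L)$ the determinant equals $L(E/\rls)$ itself, so no contraction estimate alone can force triviality. The true mechanism is arithmetic rather than metric: fix $N$ and note that $\Theta\bmod Z^N$ has coefficients $-(A_1\tau+\cdots+A_r\tau^r)\partial_\theta^{n-1}$ with entries in $\rls$ of bounded degree, so it lies in Taelman's class $S_{D,N}$ for suitable $D$; once all maximal ideals $\m$ with $\dim_{k_s}\rss/\m\rss<D$ have been absorbed into $S$ (by your telescoping, or the paper's), the coefficients of $Z^j$ for $j<N$, both of the remaining local factors and of $\det\nolimits_{k_s[[Z]]}\left(1+\Theta\mid(\lss/\rss)^n\right)$, are sums of traces of twisted-Frobenius operators $a\tau^k$ which vanish by a degree count against the residue degrees. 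This trace-vanishing argument is exactly what the paper imports from \cite{Tae12} (``define $S_{D,N}$ as in [Tae12]\dots finish as in [Tae12]''); your appeal to ``the convergence estimate in Taelman's proof'' points at the right source but mischaracterizes the phenomenon, so the decisive step of your proposal remains unproven as written.
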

\begin{proof}
 By lemma \ref{lem-fittingasdet}, we have
 \[ L(E/\rls) = \prod\limits_{\m} \det\nolimits_{k_s[[Z]]}\left(1+\Theta\mid (\rls/\m\rls)^n \right)^{-1}\mid_{Z=\theta^{-1}}, \]
 where the product runs through maximal ideals of $\ol$. 
 Fix $S \supseteq S_\infty(L)$ as above (the case $S = S_\infty(L)$ suffices).
 By equality \eqref{eqn-fittingasdet}, we have
 \[ \prod\limits_{\m} \det\nolimits_{k_s[[Z]]} \left( 1 + \Theta \mid (\rls/\m\rls)^n \right)^{-1}
  = \prod\limits_{\m} \det\nolimits_{k_s[[Z]]} \left( 1 + \Theta \mid (\rss/\m\rss)^n \right)^{-1}, \]
 where the products run through maximal ideals of $\ol$ which are not in $S$.
 
 Define $S_{D,N}$ as in \cite{Tae12}.
 It suffices to prove that for any $1 + F \in S_{D,N}$, the infinite product
 \[ \prod\limits_{\m \notin S \setminus S_\infty(L)} \det\nolimits_{k_s[[Z]]/Z^N} \left( 1 + F \mid \left( \frac{\rss}{\m\rss} \right)^n \right) \] 
 converges to
 \[ \det\nolimits_{k_s[[Z]]/Z^N} \left( 1 + F \mid \left(\frac{\lss}{\rss}\right)^n \right)^{-1}. \]
 
 Let $\m_1,\dots,\m_r$ be the maximal ideals of $\ol$ which are not in $S$ and such that $\m_i\rss$ is a maximal ideal of $\rss$ verifying $\dim_{k_s} \rss/\m_i\rss < D$.
 Applying successively lemma \ref{lem-localization} to $\rss$, $R_{S \cup \{\m_1\},s}$, $R_{S \cup \{\m_1,\m_2\},s}$, etc., we obtain the following equality: 
 \[ \begin{array}{c} 
 \displaystyle \det\nolimits_{k_s[[Z]]} \left( 1 + F \mid \left( \frac{\lss}{\rss} \right)^{\hspace{-0.1cm}n} \right) 
 \prod\limits_{\m} \det\nolimits_{k_s[[Z]]} \left( 1 + F \mid \left( \frac{\rss}{\m\rss} \right)^{\hspace{-0.1cm}n} \right) = \\
 \displaystyle \!\det\nolimits_{k_s[[Z]]} \left( 1 + F \mid \left( \frac{\lss\times L_{s,\m_1} \times \cdots \times L_{s,\m_r}}{R_{S \cup \{ \m_1,\dots,\m_r \},s}} \right)^{\hspace{-0.1cm}n} \right)\!
 \!\prod\limits_{\m \neq \m_1,\dots,\m_r} \hspace{-0.25cm} \det\nolimits_{k_s[[Z]]} \left( 1 + F \mid \left( \frac{\rss}{\m\rss} \right)^{\hspace{-0.1cm}n} \right)\!. 
 \end{array} \]
 This allows us, replacing $\rss$ by $R_{S \cup \{ \m_1,\dots,\m_r \},s}$, to suppose that $\rss$ has not maximal ideal of the form $\m\rss$ with $\m$ maximal ideal of $\ol$ which is not in $S$ such that $\dim_{k_s} \rss/\m\rss < D$. 
 Then, we can finish the proof as in \cite{Tae12}. 
\end{proof}

\subsection{Ratio of co-volumes} \label{sec-ratio}

Let $V$ be a finite dimensional $\ksinf$-vector space and $\| \cdot \|$ be a norm on $V$ compatible with $\| \cdot \|_\infty$ on $\ksinf$.
Let $M_1$ and $M_2$ be two $R_s$-lattices in $V$ and $N \in \mathbb N$.
A continuous $k_s$-linear map $\gamma\colon V/M_1 \rightarrow V/M_2$ is $N$-\emph{tangent to the identity} on $V$ if there exists an open $k_s$-subspace $U$ of $V$ such that 
\begin{enumerate}
 \item $U \cap M_1 = U \cap M_2 = \{0\}$;
 \item $\gamma$ restricts to an isometry between the images of $U$;
 \item for any $u\in U$, we have $\| \gamma(u) - u \| \leq q^{-N} \| u \|$.
\end{enumerate}
The map $\gamma$ is \emph{infinitely tangent to the identity} on $V$ if it is $N$-tangent for every positive integer~$N$.

\begin{prop} \label{prop-seriestangent}
 Let $\gamma \in M_n(L_s)\{\{\tau\}\}$ be a power series convergent on $\lsinf^n$ with constant term equal to $1$ and such that $\gamma(M_1) \subseteq M_2$.
 Then $\gamma$ is infinitely tangent to the identity on $\lsinf^n$.
\end{prop}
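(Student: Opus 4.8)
The plan is to produce, for each positive integer $N$, an explicit ball $U$ on which the three defining conditions hold, exploiting that a convergent skew power series with constant term $I_n$ is locally an isometry that moves points only slightly. Write $\gamma = \sum_{j \ge 0} e_j \tau^j$ with $e_0 = I_n$ and $e_j \in M_n(L_s)$, and set $V := \lsinf^n$; the hypothesis $\gamma(M_1) \subseteq M_2$ is what makes $\gamma$ descend to the map $V/M_1 \to V/M_2$ under study. The assumption that $\gamma$ converges on all of $V$ is exactly condition (3) of Proposition \ref{prop-exp}, namely $\lim_{j \to \infty} v_\infty(e_j)/q^j = +\infty$; hence, as in the remark following that proposition, the quantity $c := \sup_{j \ge 1}\bigl(-v_\infty(e_j)/(q^j-1)\bigr)$ is finite and, for every $x$ with $v_\infty(x) > c$,
\[ v_\infty\bigl(\gamma(x) - x\bigr) \ge \min_{j \ge 1}\bigl(v_\infty(e_j) + q^j v_\infty(x)\bigr) > v_\infty(x). \]

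Fix $N$. First I would use that the lattices $M_1, M_2$ are discrete in $V$ to pick an integer $m_1$ with $B(0, q^{-m_1}) \cap M_i = \{0\}$ for $i = 1,2$. Next I would sharpen the displayed estimate: for $j \ge 1$ the defining bound $v_\infty(e_j) \ge -c(q^j - 1)$ gives $v_\infty(e_j) + (q^j - 1)v_\infty(x) \ge (q^j-1)\bigl(v_\infty(x) - c\bigr) \ge (q-1)\bigl(v_\infty(x) - c\bigr)$ whenever $v_\infty(x) \ge c$, so that $v_\infty(x) \ge c + N/(q-1)$ forces $v_\infty(\gamma(x) - x) \ge v_\infty(x) + N$. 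I would therefore fix an integer $m > \max\bigl(m_1,\, c + N/(q-1)\bigr)$ and take $U := B(0, q^{-m})$, which is an open $k_s$-subspace of $V$.

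It then remains to verify the three conditions. Condition (1) is immediate from $m \ge m_1$. The elementary fact that drives (2) and (3) is that for $u \in U$ the nearest point of $M_i$ is $0$, so the quotient norm on $V/M_i$ agrees with $\|\cdot\|$ on the image of $U$: any nonzero $a \in M_i$ satisfies $\|a\| \ge q^{-m_1} > \|u\|$, whence $\|u - a\| = \|a\| > \|u\|$ by the ultrametric inequality. Since $v_\infty(\gamma(u) - u) > v_\infty(u)$ gives $v_\infty(\gamma(u)) = v_\infty(u)$, the map $\gamma$ preserves $\|\cdot\|$ on $U$ and sends $U$ into $U$; moreover $\gamma = \id + \delta$ with $\delta$ a $q^{-N}$-contraction on the complete ball $U$, so $\gamma$ is a bijection of $U$ by successive approximation. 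Combined with the nearest-point identification, this yields condition (2): $\gamma$ carries the image of $U$ in $V/M_1$ isometrically onto the image of $U$ in $V/M_2$. Finally, $v_\infty(u) > c + N/(q-1)$ gives $v_\infty(\gamma(u) - u) \ge v_\infty(u) + N$, and since the quotient norm of $V/M_2$ is bounded by $\|\cdot\|$ on $V$ we obtain $\|\overline{\gamma(u) - u}\|_{V/M_2} \le \|\gamma(u) - u\| \le q^{-N}\|u\|$, which is condition (3). As $N$ was arbitrary, $\gamma$ is infinitely tangent to the identity. The only genuinely delicate point is the correct reading of condition (2) — that the isometry is meant with respect to the two quotient norms — and this is exactly what the "nearest lattice point is $0$" observation resolves; everything else reduces to the valuation estimate already recorded after Proposition \ref{prop-exp}.
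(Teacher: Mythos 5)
Your proof is correct and follows essentially the same route as the paper, which establishes this proposition simply by citing Proposition 12 of Taelman's paper: that proof is exactly your argument of choosing, for each $N$, a small ball $U$ meeting $M_1$ and $M_2$ trivially, on which the tail $\gamma - 1$ is a $q^{-N}$-contraction, so that $\gamma$ is an isometric self-bijection of $U$ and the quotient norms on the images of $U$ agree with the norm on $U$. Your explicit verification that the estimates (finiteness of $c$, $v_\infty(\gamma(x)-x) \ge v_\infty(x) + (q-1)(v_\infty(x)-c)$, completeness of balls in $\lsinf^n$) carry over to the several-variable, $n$-dimensional setting is precisely the routine adaptation the citation tacitly assumes.
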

\begin{proof}
 See proposition 12 of \cite{Tae12}. 
\end{proof}

For example, by proposition \ref{prop-exp}, the map 
\[ \expe \colon \frac{\lie(E)(\lsinf)}{\expei(E(\rls))} \longrightarrow \frac{E(\lsinf)}{E(\rls)} \]
is infinitely tangent to the identity on $\lsinf^n$.

Now, let $H_1$ and $H_2$ two finite dimensional $k_s$-vector spaces which are also $R_s$-modules and set $N_i := \frac{V}{M_i} \times H_i$ for $i = 1, 2$.
A $k_s$-linear map $\gamma \colon N_1 \rightarrow N_2$ is $N$-\emph{tangent} (resp. \emph{infinitely tangent}) to the identity on $V$ if the composition
\[ \frac{V}{M_1} \hookrightarrow N_1 \stackrel{\gamma}{\longrightarrow} N_2 \twoheadrightarrow \frac{V}{M_2} \]
is so. 
For a $k_s$-linear isomorphism $\gamma \colon N_1 \rightarrow N_2$, we define an endomorphism 
\[ \Delta_\gamma := \frac{1 - \gamma^{-1} \partial_\theta \gamma Z}{1 - \partial_\theta Z} - 1 
= \sum\limits_{i \geq 1} (\partial_\theta - \gamma^{-1} \partial_\theta \gamma) \partial^{n - 1} Z^n \]
of $N_1[[Z]]$.

\begin{prop} \label{prop-detformula}
 If $\gamma$ is infinitely tangent to the identity on $V$, then $\Delta_\gamma$ is nuclear and 
 \[ \det\nolimits_{k_s[[Z]]} (1 + \Delta_\gamma \mid N_1) \mid_{Z = \theta^{-1}} = [M_1 : M_2]_{R_s} \frac{[H_2]_{R_s}}{[H_1]_{R_s}}. \]
\end{prop}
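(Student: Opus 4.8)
The plan is to mirror Taelman's argument for the analogous statement in \cite{Tae12}, using Proposition \ref{prop-multidet} and Theorem \ref{thm-commudetegaux} as the two computational engines: multiplicativity of the determinant over invariant subspaces and its invariance under conjugation by maps tangent to the identity. First I would rewrite $1 + \Delta_\gamma = 1 + \rho Z(1 - \partial_\theta Z)^{-1}$ with $\rho := \partial_\theta - \gamma^{-1}\partial_\theta\gamma$, so that the coefficient of $Z^n$ in $\Delta_\gamma$ is $\rho\,\partial_\theta^{n-1}$. The key point for nuclearity is that $\rho$ is locally contracting: on a nucleus $U$ where $\gamma$ is an isometry $N$-tangent to the identity, the commutator satisfies $\|(\partial_\theta\gamma - \gamma\partial_\theta)(u)\| \leq q^{1-N}\|u\|$, and since $\gamma$ is infinitely tangent to the identity this can be made arbitrarily contracting; hence $\rho = \gamma^{-1}[\gamma,\partial_\theta]$ is locally contracting and so is each $\rho\,\partial_\theta^{n-1}$. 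On the finite-dimensional factor $H_1$ nuclearity is automatic. This gives the first assertion.

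For the formula, I would first establish that $\det\nolimits_{k_s[[Z]]}(1 + \Delta_\gamma \mid N_1)$ does not depend on the choice of $\gamma$ infinitely tangent to the identity. Given two such maps $\gamma_1,\gamma_2$, set $\delta := \gamma_2^{-1}\gamma_1$, which is again infinitely tangent to the identity on $V$, and $P := \gamma_2^{-1}\partial_\theta\gamma_2$. Then $(1 + \Delta_{\gamma_1})(1 + \Delta_{\gamma_2})^{-1} = (1 - \delta^{-1}P\delta Z)(1 - PZ)^{-1}$, which is exactly the ratio appearing in Theorem \ref{thm-commudetegaux}(2) with $1 + \varphi = \delta^{-1}$ and $\psi = P\delta$; here $\varphi = \delta^{-1} - 1$ is locally contracting, so every composition containing a factor $\varphi$ is locally contracting. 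Theorem \ref{thm-commudetegaux}(2) then yields $\det\nolimits_{k_s[[Z]]}\big((1 + \Delta_{\gamma_1})(1 + \Delta_{\gamma_2})^{-1} \mid N_1\big) \equiv 1 \bmod Z^N$ for every $N$, hence equals $1$; combined with Proposition \ref{prop-multidet}(2) this gives $\det\nolimits_{k_s[[Z]]}(1 + \Delta_{\gamma_1} \mid N_1) = \det\nolimits_{k_s[[Z]]}(1 + \Delta_{\gamma_2} \mid N_1)$.

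Using this freedom together with the multiplicativity of Proposition \ref{prop-multidet}(1), I would reduce to the two extreme situations, both sides being multiplicative along short exact sequences (for $[M_1 : M_2]_{R_s}$ and for the Fitting generators $[H_i]_{R_s}$). In the purely finite case $V = 0$ the determinant is an honest finite-dimensional one, $\det\nolimits_{k_s[[Z]]}(1 + \Delta_\gamma \mid H_1) = \det(1 - \gamma^{-1}\partial_\theta\gamma Z \mid H_1)\,\det(1 - \partial_\theta Z \mid H_1)^{-1}$; the numerator equals $\det(1 - \partial_\theta Z \mid H_2)$ because $\gamma$ conjugates the $\theta$-action on $H_1$ to that on $H_2$, and $\det(1 - \partial_\theta Z \mid H_i)\mid_{Z = \theta^{-1}} = \theta^{-\dim_{k_s}H_i}[H_i]_{R_s}$. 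Since $\gamma$ is an isomorphism we have $\dim_{k_s}H_1 = \dim_{k_s}H_2$, and the two powers of $\theta$ cancel, leaving $[H_2]_{R_s}/[H_1]_{R_s}$, as required.

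The main obstacle is the purely lattice case $H_1 = H_2 = 0$, where one must show $\det\nolimits_{k_s[[Z]]}(1 + \Delta_\gamma \mid V/M_1)\mid_{Z = \theta^{-1}} = [M_1 : M_2]_{R_s}$. Here I would use that the nuclear determinant is computed on a finite-dimensional quotient $V/(M_1 + U)$ for a nucleus $U$, reducing to a finite linear-algebra computation; the delicate point is to identify this value, after evaluation at $Z = \theta^{-1}$ and monic normalization over $k_s$, with the determinant of a $\sigma \in \gl(V)$ carrying $M_1$ onto $M_2$. I expect to organize this via elementary divisors, using the invariance of the previous step to replace $\gamma$ by a convenient map and Theorem \ref{thm-commudetegaux} to strip off conjugations, reducing to elementary modifications of the lattice; note that the existence of an infinitely tangent isomorphism already forces $M_1$ and $M_2$ to have the same co-volume, so $[M_1 : M_2]_{R_s}$ is a one-unit and the computation is consistent. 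Matching the normalization so that the determinant is exactly the monic representative of $k_s^\times \det\sigma$ is the step requiring the most care.
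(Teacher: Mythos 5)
Your first three steps are sound and faithfully reproduce the first half of the argument that the paper invokes (its own proof is just the citation ``See theorem 4 of \cite{Tae12}''): the nuclearity of $\Delta_\gamma$ via the local contraction of $\rho = \partial_\theta - \gamma^{-1}\partial_\theta\gamma$, the independence of the determinant from the choice of $\gamma$ via Theorem \ref{thm-commudetegaux}(2) --- your substitution $1+\varphi = \delta^{-1}$, $\psi = P\delta$ with $\delta = \gamma_2^{-1}\gamma_1$ is exactly right, and the contraction hypotheses hold because $\delta$ is infinitely (not merely $N$-)tangent to the identity --- and the finite-module computation $\det(1-\partial_\theta Z \mid H_i)\mid_{Z=\theta^{-1}} = \theta^{-\dim_{k_s}H_i}[H_i]_{R_s}$, which is correct since the Fitting generator of a finite $R_s$-module is the characteristic polynomial of the $\theta$-action.

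The genuine gap is your last paragraph, which is the heart of the theorem and is only a hope, not a proof; worse, the tool you name cannot work. In the pure lattice case the two lattices are in general \emph{not} commensurable --- in the intended application $M_1 = \lie(E)(\rls)$ and $M_2 = \expei(E(\rls))$, and already for $V = \ksinf$ one has $R_s \cap \alpha R_s = 0$ for a generic $1$-unit $\alpha \in \ksinf$ --- so there are no elementary divisors relating $M_1$ and $M_2$, and for $\dim_{\ksinf} V \geq 2$ a pair of incommensurable lattices admits no simultaneous ``diagonal'' bases $(e_i)$, $(\alpha_i e_i)$ at all (the intersection of $\sigma M_1$ with every $K_s$-rational line can be zero). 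Moreover, any chain of ``elementary modifications'' through commensurable lattices would force $M_1$ and $M_2$ to be commensurable, so no such dévissage exists; and your reduction tacitly requires infinitely tangent isomorphisms to exist between the intermediate pairs, which is never verified --- note that the obvious candidate for a scaling move, $x \mapsto \alpha x$ with $\alpha \neq 1$ fixed, satisfies $\|\alpha u - u\| = \|\alpha - 1\|\,\|u\|$ and hence is \emph{never} infinitely tangent to the identity. Taelman's actual proof of the lattice-index identity does not pass through elementary divisors: the nuclear determinant modulo $Z^N$ is a ratio of characteristic polynomials of the two $\theta$-actions on a finite-dimensional quotient $V/(M_1+U)$ for a common nucleus $U$, and the identification of its value at $Z = \theta^{-1}$ with the monic representative of $\det\sigma$, $\sigma(M_1)=M_2$, is obtained by a limiting comparison of these finite quotients as $U$ shrinks --- precisely the bridge between the $k_s$-linear determinant and the $\ksinf$-linear one that your sketch defers. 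A side issue of the same kind: your splitting of a general triple into the two ``extreme'' cases needs, to absorb $H_1$ into a sublattice $M_1' \subseteq M_1$ with $M_1/M_1' \simeq H_1$, that $H_1$ be generated by at most $n$ elements, which fails in general (for Drinfeld modules $n=1$ while the class module need not be cyclic), so even that reduction requires an extra device.
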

\begin{proof}
 See theorem 4 of \cite{Tae12}. 
\end{proof}

\subsection{Proof of theorem \ref{thm-cnf}} \label{sec-endproof}

By theorem \ref{prop-traceformula}, $L(E/\rls)$ converges in $\ksinf$ and 
\[ L(E/\rls) = \det\nolimits_{k_s[[Z]]} \left( 1 + \Theta \mid (\lsinf/\rls)^n \right) \mid_{Z = \theta^{-1}}. \]
The exponential map $\expe$ induces a short exact sequence of $R_s$-modules
\[ 0 \longrightarrow \frac{\lie(E)(\lsinf)}{\expei(E(\rls))} \longrightarrow \frac{E(\lsinf)}{E(\rls)} \longrightarrow H(E/\rls) \longrightarrow 0. \]
By proposition \ref{prop-classmod}, the $k_s$-vector space $H(E/\rls)$ is of finite dimension. 
Moreover, since the $R_s$-module on the left is divisible and $R_s$ is principal, the sequence splits.
The choice of a section gives rise to an isomorphism of $R_s$-modules
\[ \frac{\lie(E)(\lsinf)}{\expei(E(\rls))} \times H(E/\rls) \simeq \frac{E(\lsinf)}{E(\rls)}. \]
This isomorphism can be restricted to an isomorphism of $k_s$-vector space
\[ \gamma \colon \frac{\lie(E)(\lsinf)}{\expei(E(\rls))} \times H(E/\rls) \stackrel{\sim}{\longrightarrow} \left( \frac{\lsinf}{\rls} \right)^n. \]
Observe that $\gamma$ corresponds with the map induced by $\expe$.
By proposition \ref{prop-seriestangent}, $\gamma$ is infinitely tangent to the identity on $\lsinf^n$.
By second point of proposition \ref{prop-exp} , we have $\expe \partial_\theta \expei = \phi_\theta$, hence the equality of $k_s[[Z]]$-linear endomorphisms of $\left( \frac{\lsinf}{\rls} \right)^n\![[Z]]$:
\[ 1 + \Theta = \frac{1 - \gamma \partial_\theta \gamma^{-1} Z}{1 - \partial_\theta Z}. \]
Thus, by theorem \ref{prop-detformula}, we obtain
\[ \det\nolimits_{k_s[[Z]]} ( 1 + \Theta \mid (\lsinf/\rls)^n ) \mid_{Z = \theta^{-1}} = [\lie(E)(\rls) : \expei(E(\rls))]_{R_s}[H(E/\rls)]_{R_s}. \]
This concludes the proof.

\section{Applications} \label{sec-appli}

\subsection{The \texorpdfstring{$n^{th}$}{n-th} tensor power of the Carlitz module} \label{sec-nthcarlitz}

Let $\alpha$ be a non-zero element of $R_s$. 
Let $E_\alpha$ be the Anderson module defined by the morphism of $k_s$-algebras $\phi \colon R_s \rightarrow M_n(R_s)\{\tau\}$ given by 
\[ \phi_\theta = \partial_\theta + N_\alpha\tau, \]
where 
\[ \partial_\theta = \begin{pmatrix}
        \theta & 1 & \cdots & 0 \\
        0 & \ddots & \ddots & \vdots \\
        \vdots & \ddots & \ddots & 1 \\
        0 & \cdots & 0 & \theta \\
       \end{pmatrix}
\quad \text{and} \quad 
N_\alpha = \begin{pmatrix} 
	0 & \cdots & \cdots & 0 \\ 
        \vdots & & & \vdots \\
        0  & & & \vdots \\
        \alpha  & 0 & \cdots & 0 \\
       \end{pmatrix}.
\]
In other words, if ${}^t(x_1,\dots,x_n)\in\csinf^n$, we have
\[ \phi_\theta\begin{pmatrix} x_1 \\ \vdots \\ x_n \end{pmatrix} = \begin{pmatrix} \theta x_1+x_2 \\ \vdots \\ \theta x_{n-1} + x_n \\ \theta x_n + \alpha\tau(x_1) \end{pmatrix}. \]

The case $\alpha = 1$ is denoted by $\cn$, the $n^{th}$ tensor power of Carlitz module, introduced in \cite{AndTha90}. 
In this section, we show that the exponential map associated to $\cn$ is surjective on $\csinf^n$ and we recall its kernel.

\subsubsection{Surjectivity and kernel of \texorpdfstring{$\expcn$}{expcn}} \label{sec-expcn}

By proposition \ref{prop-exp}, there exists a unique exponential map $\expcn$ associated with $\cn$ and by \cite[section 2]{AndTha90}, there exists a unique formal power series
\[ \logcn = \sum\limits_{i \geq 0} P_i \tau^i \in M_n(\csinf)\{\{\tau\}\} \]
such that $P_0 = I_n$ and $\logcn \cnt = \partial_\theta \logcn$.
These two maps are inverses of each other, \ie we have the equality of formal power series
\[ \logcn \expcn = \expcn \logcn = I_n. \]
Furthermore, by \cite[proposition 2.4.2 and 2.4.3]{AndTha90}, the series $\expcn(f)$ converges for all $f \in \csinf^n$ and $\logcn(f)$ for all $f = (f_1,\dots,f_n) \in \csinf^n$ such that $v_\infty(f_i) > n - i - \frac{nq}{q-1}$ for $1 \leq i \leq n$.

For an $n$-tuple $(r_1,\dots,r_n)$ of real numbers, we denote by $D_n(r_i, i = 1,\dots,n)$ the polydisc
\[ \left\{ f \in \csinf^n \mid v_\infty(f_i) > r_i,\ i = 1,\dots, n \right\}. \]

\begin{prop} \label{prop-expsurj}
 The exponential map $\expcn$ is surjective on $\csinf^n$.
\end{prop}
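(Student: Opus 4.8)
The plan is to extract a polydisc inside the image from the logarithm and then inflate it to all of $\csinf^n$ using the $A$-module structure. First, since $\logcn$ converges on the polydisc $D := D_n(n-i-\tfrac{nq}{q-1},\,i=1,\dots,n)$ and $\expcn\logcn = I_n$ there, every $y\in D$ equals $\expcn(\logcn(y))$; hence $D\subseteq\im(\expcn)$.

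Next I would record two structural features of the image. Because $\expcn$ is additive and locally isometric near $0$ (the estimate following Proposition \ref{prop-exp}), it is locally isometric at every point, so it is an open map and $\im(\expcn)$ is an open---hence also closed---$\fq$-subspace of $\csinf^n$. Because $\expcn\partial_\theta = \phi_\theta\expcn$, the image is stable under the action of $\phi_a$ for every $a\in A$, i.e.\ it is a sub-$\phi(A)$-module of $\cn(\csinf)$. The strategy is then to show that the smallest closed $\phi(A)$-submodule containing $D$ is everything: applying $\phi_{\theta^m}$ to $D$ produces elements of arbitrarily negative valuation, and a bookkeeping of valuations---controlled by $(A_0-\theta I_n)^n=0$ and Lemma \ref{lem-fang}, which pin down the valuations of the iterates of $\partial_\theta$ and $\phi_\theta$---should show that these translates, together with $\fq$-linear combinations and limits (legitimate since $\im(\expcn)$ is closed), reach every element of $\csinf^n$. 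Equivalently, one checks that $\expcn$ carries the large polydisc $D_n(r_i)$ onto a polydisc $D_n(r_i')$ with $r_i'\to-\infty$ as $r_i\to-\infty$, so that the images exhaust $\csinf^n$.

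The main obstacle is exactly this inflation step, and the reason it is delicate is that $\csinf$ is not algebraically closed: its residue field $\overline{\fq}(t_1,\dots,t_s)$ is not, and $\tau$ is not surjective, so one cannot invert $\phi_\theta$ or extract $q$-power roots to pull an arbitrary target into $D$. Thus the argument cannot proceed by division; it must run in the forward direction, through an explicit estimate of how $\expcn$ transforms the valuation vector of its argument. Making that estimate precise---so that the Newton-polygon data arising coordinate by coordinate forces a solution already in $\csinf$ rather than in a proper extension---is the crux, while the reductions in the first two paragraphs are formal.
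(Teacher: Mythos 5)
Your first paragraph coincides with the paper's starting point, and your instinct that everything hinges on inflating the polydisc is correct; but the mechanism you propose for the inflation has a genuine gap, not merely a missing estimate. Every ingredient of your second paragraph---the image contains a polydisc (local invertibility of the exponential), the image is an open hence closed subspace, it is a $\phi(R_s)$-submodule, and applying $\phi_{\theta^m}$ produces elements of arbitrarily negative valuation---applies verbatim to the twisted module $E_{t_1}$ given by $\phi_\theta = \partial_\theta + N_{t_1}\tau$, whose exponential $\exp_{t_1}$ is \emph{not} surjective: by Proposition \ref{prop-andmodiso} and the example following it, $\expa$ is surjective if and only if $\alpha \in \calus$, and $t_1 \notin \calus$ by a degree count in $t_1$. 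So no bookkeeping of valuations on closed submodules generated by the polydisc can close the argument: surjectivity is an arithmetic property of the twist $\alpha$, invisible to the metric structure. What the paper actually does at the inflation step (Lemma \ref{lem-equivsurj}) is solve, for each target $y$ in the next larger polydisc, the semilinear equation $\tau(x_1) - (-\theta)^n x_1 = \sum_{i=1}^n (-\theta)^{n-i} y_i$ and back-substitute for $x_2,\dots,x_n$; the valuation estimates (your ``bookkeeping'') only verify that the solution lies in the previous polydisc, while the \emph{existence} of a solution in $\csinf$ is the whole content. After twisting by $(-\theta)^{n/(q-1)} \in \cinf$, existence reduces to surjectivity of $\tau - 1$ on $\csinf$, which is a separate, nontrivial result (Lemma \ref{lem-tau-1surj}): its proof uses density of $\cinf(t_1,\dots,t_s)$ in $\csinf$, a geometric-series expansion reducing a rational function to an element of $\frac{1}{\gamma}\cinf[t_1,\dots,t_s]$ with $\gamma \in \fq[t_1,\dots,t_s]$ fixed by $\tau$, and coefficientwise solvability of $x^q - x = c$ in the algebraically closed field $\cinf$. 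Nothing in your outline produces this existence statement, and your closing appeal to ``Newton-polygon data forcing a solution already in $\csinf$'' is precisely the assertion that fails for general twists.

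A secondary correction: $\tau$ itself \emph{is} surjective, indeed bijective, on $\csinf$, since $\cinf$ is perfect and $\tau$ multiplies the Gauss valuation by $q$, so surjectivity on $\cinf(t_1,\dots,t_s)$ passes to the completion. What fails, because $\csinf$ is not algebraically closed, is the solvability of the \emph{twisted} equations $\tau(x) = \alpha x$ (equivalently, membership $\alpha \in \calus$ in the exact sequence of Lemma \ref{lem-sesUs}); this, and not any failure of $\tau$ or a coordinatewise Newton-polygon obstruction, is the dividing line between $\cn$ (where $\alpha = 1 \in \calus$, and the inhomogeneous equation $\tau(x) - x = y$ is solvable by Lemma \ref{lem-tau-1surj}) and the non-surjective examples such as $\exp_{t_1}$.
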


To prove this, we reduce to the one dimensional case.

\begin{lem} \label{lem-equivsurj}
 The following assertions are equivalent:
 \begin{enumerate}
  \item $\expcn$ is surjective on $\csinf^n$;
  \item $\cnt$ is surjective on $\csinf^n$;
  \item $\tau - 1$ is surjective on $\csinf$.
 \end{enumerate}
\end{lem}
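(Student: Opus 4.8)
The plan is to prove the two non-trivial equivalences $(1)\Leftrightarrow(2)$ and $(2)\Leftrightarrow(3)$ separately, using as backbone the functional equation $\cnt\,\expcn=\expcn\,\partial_\theta$ coming from the second point of Proposition~\ref{prop-exp}, together with two structural observations. First, $\partial_\theta$ is invertible on $\csinf^n$: its matrix is upper triangular with $\theta$ on the diagonal, so $\det\partial_\theta=\theta^n\neq 0$. Second, by the local isometry remark following Proposition~\ref{prop-exp} and the convergence of $\logcn$ on a polydisc, $\expcn$ restricts to an isometric bijection of some polydisc $D$ centered at $0$ onto itself, so that $D=\expcn(D)\subseteq\im\expcn$. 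Combining these gives the two facts I reuse throughout: $D\subseteq\im\expcn$, and $\im\expcn$ is $\cnt$-stable, since $\cnt(\im\expcn)=\expcn(\partial_\theta(\csinf^n))=\expcn(\csinf^n)=\im\expcn$.

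For $(1)\Rightarrow(2)$ I argue directly: if $\expcn$ is surjective, then using the stability just recorded, $\im\cnt\supseteq\cnt(\im\expcn)=\im\expcn=\csinf^n$, so $\cnt$ is surjective. For $(2)\Leftrightarrow(3)$ I make the reduction to one variable explicit. Writing $\cnt(f)=y$ componentwise, the first $n-1$ equations express $f_2,\dots,f_n$ as $\csinf$-polynomials in $f_1$ and $y_1,\dots,y_{n-1}$, namely $f_{i+1}=(-\theta)^i f_1+(\text{a polynomial in }y_1,\dots,y_i)$; substituting into the last equation $\theta f_n+f_1^q=y_n$ collapses the whole system to the single additive equation $f_1^q+(-1)^{n-1}\theta^n f_1=z$, where $z$ ranges over all of $\csinf$ as $y$ ranges over $\csinf^n$. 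Hence $\cnt$ is surjective iff the additive polynomial $P(x)=x^q+(-1)^{n-1}\theta^n x$ is surjective on $\csinf$. The substitution $x=\lambda u$ with $\lambda^{q-1}=(-1)^n\theta^n$, a $\lambda$ which exists already in $\cinf\subseteq\csinf$, turns $P(\lambda u)$ into $\lambda^q(u^q-u)=\lambda^q(\tau-1)(u)$, so $P$ is surjective iff $\tau-1$ is, giving $(2)\Leftrightarrow(3)$.

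The remaining and most delicate implication is $(2)\Rightarrow(1)$, which I prove by a descent into the nucleus $D$. Given $y\in\csinf^n$, I build a backward $\cnt$-orbit $y=y_0,y_1,y_2,\dots$ with $\cnt(y_{i+1})=y_i$. Surjectivity of $\cnt$ guarantees preimages in $\csinf^n$, and since $\ker\cnt\subseteq\cinf^n$ (the $\theta$-torsion of the module being algebraic over $K$), the entire preimage coset of each $y_i$ lies in $\csinf^n$. The crucial point is that these preimages can be chosen with $v_\infty(y_{i+1})>v_\infty(y_i)$: the Newton polygon of $P(x)-z$ shows that every $w\neq 0$ has a preimage of valuation $v_\infty(w)/q$ when $w$ is large and $v_\infty(w)+n$ when $w$ is small, in both cases strictly larger. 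Thus $v_\infty(y_i)\to+\infty$ and $y_m\in D$ for $m$ large, whence $y=\cnt^m(y_m)$ lies in $\im\expcn$ because $y_m\in D\subseteq\im\expcn$ and $\im\expcn$ is $\cnt$-stable; so $\expcn$ is surjective.

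The main obstacle is precisely this valuation control. One must verify that the valuation-increasing branch of preimages actually stays in $\csinf^n$, which is why the containment $\ker\cnt\subseteq\cinf^n$ is essential (so that one preimage in $\csinf^n$ forces all of them to be there), and that the chosen valuations genuinely increase to $+\infty$ rather than stabilizing, so that the orbit is eventually captured by the polydisc $D$ on which $\expcn$ is known to be a bijection. This is also the place where assumption $(2)$ is genuinely used: for $s\geq 1$ the field $\csinf$ is not algebraically closed, so preimages under $\cnt$ need not exist a priori, and it is only the hypothesis of surjectivity that makes the descent possible.
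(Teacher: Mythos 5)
Your reductions $(1)\Rightarrow(2)$ and $(2)\Leftrightarrow(3)$ are correct and essentially the paper's own: the paper likewise collapses the system $\cnt(x)=y$ to the single equation $\tau(x_1)-(-\theta)^n x_1=\sum_{i=1}^n(-\theta)^{n-i}y_i$ and untwists $\tau-(-\theta)^n$ into $\tau-1$ via the element $(-\theta)^{n/(q-1)}$ (your $\lambda$). The genuine gap is in $(2)\Rightarrow(1)$, at the claim that preimages under $\cnt$ can be chosen with $v_\infty(y_{i+1})>v_\infty(y_i)$. Your Newton polygon (and your kernel argument, which correctly places all roots of the scalar equation in $\csinf$) controls only the first coordinate $x_1$; the remaining coordinates come from back-substitution $x_{i+1}=y_i-\theta x_i$, and each multiplication by $\theta$ can lower the valuation by $1$. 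Concretely, take $n=3$ and $y=(y_1,0,0)$ with $v_\infty(y_1)=V$ in your "small" regime: then $z=\theta^2y_1$ has valuation $V-2$, the good root has $v_\infty(x_1)=V+1$, but generically $v_\infty(x_2)=V$ and $v_\infty(x_3)=V-1$, so $v_\infty(x)=V-1<V=v_\infty(y)$. The min-valuation of the vector can strictly \emph{decrease} in one step (for $n=2$ one merely loses strictness), so your backward orbit is not shown to climb into the polydisc $D$, and the descent as written does not close.

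This is precisely the difficulty the paper's proof is engineered around: it works with polydiscs $D_n(r_{k,i},\,i=1,\dots,n)$ whose radii are staggered coordinatewise, $r_{k,i+1}\le r_{k,i}-1$, with the shift recursion $r_{k+1,i}=r_{k,i+1}$ for $i<n$ and $r_{k+1,n}=qr_{k,1}$; the exhaustion of $\csinf^n$ comes from the estimate $r_{k,i}\le r_{0,i}-k$, not from monotonicity of the naive min-valuation. Your argument can be repaired along your own lines by replacing $v_\infty$ on vectors with the weighted quantity $w(x):=\min_i\left(v_\infty(x_i)+i\right)$, which encodes exactly this staggering (in the counterexample above, $w(x)=V+2>V+1=w(y)$, as desired), and by adding the case distinction at the exceptional valuation $v_\infty(x_1)=-n/(q-1)$, where the formula $v_\infty\left(\tau(x_1)-(-\theta)^nx_1\right)=\min\left(qv_\infty(x_1),\,v_\infty(x_1)-n\right)$ underlying your two Newton-polygon regimes fails and which the paper treats separately. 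As submitted, however, the key monotonicity step of your $(2)\Rightarrow(1)$ is false, so the proof is incomplete.
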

\begin{proof}
 It is easy to show that that (1) implies (2).
 Indeed, let $y \in \csinf^n$.
 By hypothesis, there exists $x \in \csinf^n$ such that $\expcn(x) = y$. 
 Hence we have
 \[ \cnt \expcn (\partial_\theta^{-1} x) = \expcn(x) = y. \]
 
 Next we prove that (2) implies (3).
 Since $\cnt$ is supposed to be surjective on $\csinf^n$, for any $y = (y_1,\dots,y_n) \in \csinf^n$, there exists $x = (x_1,\dots,x_n) \in \csinf^n$ such that 
 \[ \left\{ \begin{array}{ccc} 
            \theta x_1 + x_2 & = & y_1 \\ 
            & \vdots & \\
            \theta x_{n - 1} + x_n & = & y_{n - 1} \\
            \theta x_n + \tau(x_1) & = & y_n \\ 
    \end{array} \right.. \]
 In particular, we get
 \begin{equation} \label{eqn-tauxsumy} 
  \tau(x_1) - (-\theta)^n x_1 = \sum\limits_{i = 1}^n (-\theta)^{n - i} y_i.  
 \end{equation}
 Thus $\tau - (-\theta)^n$ is surjective on $\csinf$.
 But we have
 \[ \tau \left( (-\theta)^{\frac{n}{q-1}} \right) = (-\theta)^n (-\theta)^{\frac{n}{q - 1}}, \]
 hence $\tau - 1$ is also surjective on $\csinf$.

 In fact, it is also easy to check that (3) implies (2). 
 As in the previous case, the surjectivity of $\tau - (-\theta)^n$ is deduced from the surjectivity of $\tau - 1$.
 Hence, for a fixed $y = (y_1,\dots,y_n) \in \csinf^n$, there exists $x_1 \in \csinf$ verifying equation \eqref{eqn-tauxsumy}.
 Then, by back-substitution, we find successively $x_2,\dots,x_n \in \csinf$ such that $x = (x_1,\dots,x_n)$ satisfies $\cnt(x) = y$.

 We finally prove that (2) implies (1). 
 Since $\logcn$ converges on the polydisc $D_n(n - i - \frac{nq}{q - 1}, i = 1,\dots,n)$ and $\expcn \logcn$ is the identity map on it, this polydisc is included in the image of the exponential.
 We will "grow" this polydisc to show that $\expcn$ is surjective.
 For $i = 1,\dots,n$, we define
 \[ r_{0, i} := n - i - \frac{nq}{q-1} = - i - \frac{n}{q-1}, \]
 and for $k \geq 1$, 
 \[ r_{k + 1, i} = \left\{\begin{array}{cl} r_{k, i + 1} & \text{if}\ 1 \leq i \leq n - 1 \\ 
  q r_{k, 1} & \text{if}\ i = n \end{array} \right..\]
 By induction, we prove that for any integer $k \geq 0$ and any $1 \leq i \leq n - 1$,
 \begin{equation} \label{eqn-rki+1}
  r_{k,i + 1} \leq r_{k,i} - 1. 
 \end{equation}
 We also prove that for any integer $k \geq 0$ and $i \in \{1,\dots,n\}$, we have $r_{k,i} \leq r_{0,i} - k$.
 In particular, for any $1 \leq i \leq n$, the sequence $(r_{k,i})$ tends to $- \infty$, \ie the polydiscs $D_n(r_{k,i}, i = 1,\dots,n)$ cover $\csinf^n$.
 Thus, it suffices to show that $D_n(r_{k,i}, i = 1,\dots,n) \subseteq \im \expcn$ for any integer $k \geq 0$.
 
  The case $k = 0$, corresponding to the convergence domain of $\logcn$, is already known.
 Let us suppose that $D_n(r_{k,i}, i = 1,\dots,n)$ is included in the image of $\expcn$ for an integer $k \geq 0$.
 Let $y$ be an element of $D_n(r_{k + 1, i}, i = 1,\dots,n) \setminus D_n(r_{k,i}, i = 1,\dots,n)$.
 
 We claim that there exists $x \in D_n(r_{k, i}, i = 1,\dots,n)$ such that $\cnt(x) = y$.
 
 Assume temporally this.
 Since $D_n(r_{k, i}, i = 1,\dots,n) \subseteq \im \expcn$, there exists $z \in \csinf^n$ such that $\expcn(z) = x$. 
 Thus 
 \[ \expcn(\partial_\theta z) = \cnt \expcn(z) = \cnt(x) = y. \]
 In particular $y$ is in the image of the exponential as expected.
 
 It only remains to prove the claim. 
 By hypothesis, there exists $x = (x_1,\dots,x_n) \in \csinf^n$ such that
 \[ \left\{ \begin{array}{ccc}
     x_2 & = & y_1 - \theta x_1 \\
     & \vdots & \\
     x_n & = & y_{n - 1} - \theta x_n \\
     \tau(x_1) - (-\theta)^n x_1 & = & \sum\limits_{i = 1}^n (-\theta)^{n - i} y_i
    \end{array}\right.. \]
 We need to show that $x$ is in $D_n(r_{k, i}, i = 1,\dots,n)$.
 Let begin by showing $v_\infty(x_1) > r_{k, 1}$.
 If $v_\infty(x_1) = \frac{-n}{q - 1}$, then $v_\infty(x_1) > r_{0, 1} > r_{k, 1}$. 
 So we may suppose that $v_\infty(x_1) \neq \frac{-n}{q - 1}$.
 Then 
 \[ v_\infty(\tau(x_1) - (-\theta)^n x_1) = \min(q v_\infty(x_1)\ ;\ v_\infty(x_1) - n). \]
 In particular,
 \[ q v_\infty(x_1) \geq v_\infty\left( \sum\limits_{i = 1}^n (-\theta)^{n - i} y_i \right) \geq \underset{1 \leq i \leq n}{\min}(v_\infty(y_i) - n + i) > \underset{1 \leq i \leq n}{\min}(r_{k + 1, i} - n + i), \]
 where the last inequality comes from the fact that $y$ is in $D_n(r_{k + 1, i}, i = 1,\dots,n)$. 
 But, by the inequality \eqref{eqn-rki+1}, we have
 \[ r_{k + 1, n} \leq r_{k + 1, n - 1} - 1 \leq \cdots \leq r_{k + 1, 1} - n + 1. \]
 Hence we get
 \[ q v_\infty(x_1) > r_{k + 1, n} = q r_{k,1}, \]
 as desired.
 
 Finally, we show that $v_\infty(x_i) > r_{k, i}$ for all $2 \leq i \leq n$.
 Since $y \in D_n(r_{k + 1, i}, i = 1,\dots,n)$, we have 
 \[ v_\infty(x_2) \geq \min(v_\infty(y_1)\ ;\ v_\infty(x_1) - 1) > \min (r_{k + 1, 1}\ ;\ r_{k, 1} - 1) = r_{k,2}, \]
 where the last equality comes from the definition of $r_{k + 1}$ and from inequality \eqref{eqn-rki+1}.
 On the same way, we obtain the others needed inequalities.  
\end{proof}

\begin{lem} \label{lem-tau-1surj}
 The application $\tau - 1 \colon \csinf \rightarrow \csinf$ is surjective.
\end{lem}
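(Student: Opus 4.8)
The plan is to exhibit two explicit subgroups of $\im(\tau-1)$ which together exhaust $\csinf$. Put $W := \cinf\cdot k_s$, the $\cinf$-linear span of $k_s=\fq(t_1,\dots,t_s)$ inside $\csinf$; concretely an element of $W$ is a finite sum $\sum_i\alpha_i u_i$ with $\alpha_i\in\cinf$ and $u_i\in k_s$. First I would check that $W\subseteq\im(\tau-1)$: since $\tau$ fixes $k_s$ pointwise one has $(\tau-1)\bigl(\sum_i\beta_i u_i\bigr)=\sum_i(\beta_i^q-\beta_i)u_i$, so to hit $y=\sum_i\alpha_i u_i\in W$ it suffices to solve the Artin--Schreier equations $\beta_i^{\,q}-\beta_i=\alpha_i$ in $\cinf$, which is possible because $\cinf$ is algebraically closed. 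Second, for the ``contracting'' part I use that $\tau$ multiplies $v_\infty$ by $q$: if $v_\infty(y)>0$ then $x:=-\sum_{k\geq 0}\tau^k(y)$ converges in $\csinf$, its terms having valuation $q^k v_\infty(y)\to+\infty$, and it satisfies $(\tau-1)x=y$. Hence $\{f\in\csinf : v_\infty(f)>0\}\subseteq\im(\tau-1)$ as well.

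The heart of the argument is then to show that every $y\in\csinf$ can be written $y=w+m$ with $w\in W$ and $v_\infty(m)>0$; granting this, $y\in\im(\tau-1)$ since $\im(\tau-1)$ is an additive subgroup containing both summands. To produce $w$, I would first replace $y$ by an element of $\cinf(t_1,\dots,t_s)$ agreeing with it modulo $\{v_\infty>0\}$, which is legitimate because $\csinf$ is by definition the completion of $\cinf(t_1,\dots,t_s)$. For a rational function the key point is that $W$ surjects onto every graded piece $\operatorname{gr}_\gamma\csinf\cong\overline{\fq}(t_1,\dots,t_s)$: the residue field of $\cinf$ is $\overline{\fq}$, and $\overline{\fq}(t_1,\dots,t_s)=\overline{\fq}\cdot k_s$, so a symbol $\sum_j\bar c_j u_j$ (with $\bar c_j\in\overline{\fq}$, $u_j\in k_s$) is the leading term of $\sum_j\alpha_j u_j\in W$ for suitable $\alpha_j\in\cinf$. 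Matching successively the leading symbols of the rational function and subtracting, the valuation of the remainder strictly increases, and finitely many steps push it into $\{v_\infty>0\}$, giving the desired $w\in W$.

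The main obstacle is precisely this last approximation step, i.e.\ controlling the negative-valuation part of an arbitrary element. The naive attempt---solving $(\tau-1)x=y$ by the series $\sum_{k\geq 1}\tau^{-k}(y)$---fails when $v_\infty(y)<0$, since its terms have valuations $q^{-k}v_\infty(y)\to 0$ and the series diverges; this is why I route the negative part through $W$, where algebraic closedness of $\cinf$ solves the problem coefficient by coefficient. The two facts making the reduction work are that $\cinf$ has residue field exactly $\overline{\fq}$ and that $\overline{\fq}(t_1,\dots,t_s)=\overline{\fq}\cdot k_s$. The genuinely technical point, which I expect to be the most delicate to write carefully, is that a rational function has only finitely many symbol-valuations in any half-line $(-\infty,R]$ (they arise from its finitely many poles, each contributing a progression of valuations tending to $+\infty$), so that the symbol-matching terminates and $w$ may be taken in $W$ rather than only in its closure.
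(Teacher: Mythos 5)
Your overall architecture is the same as the paper's, and the parts you prove are proved correctly: the contracting series $x=-\sum_{k\ge 0}\tau^k(y)$ gives $\{f\in\csinf\mid v_\infty(f)>0\}\subseteq\im(\tau-1)$; density of $\cinf(t_1,\dots,t_s)$ in $\csinf$ reduces the problem to rational functions; and $W=\cinf\cdot k_s\subseteq\im(\tau-1)$ follows by solving Artin--Schreier equations coefficientwise (the paper's version of this is $(\tau-1)(\cinf[t_1,\dots,t_s])=\cinf[t_1,\dots,t_s]$ and $(\tau-1)\bigl(\tfrac{1}{\gamma}\cinf[t_1,\dots,t_s]\bigr)=\tfrac{1}{\gamma}\cinf[t_1,\dots,t_s]$ for $\gamma\in\fq[t_1,\dots,t_s]\setminus\{0\}$). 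The facts you invoke about the residue field ($\overline{\fq}$ for $\cinf$, and $\overline{\fq}(t_1,\dots,t_s)=\overline{\fq}\cdot k_s$ via the conjugate/norm trick) are also true. The divergence, and the problem, is in how a rational function $f=g/h$ gets split as $w+m$ with $w\in W$ and $v_\infty(m)>0$.

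Your successive leading-symbol subtraction only guarantees that the remainder's valuation strictly increases at each step. Since the value group of $\csinf$ is $\mathbb{Q}$, which is dense, strict increase does not force the process to leave $(-\infty,0]$ in finitely many steps; and if infinitely many steps are needed the valuations may accumulate below $0$, in which case the partial sums do not even converge to $f$, so nothing is obtained. (Note also that the ``symbol-valuations'' are not intrinsic: they depend on the choice of lifts, and a careless lift can inject new terms whose valuations creep upward arbitrarily slowly.) Your proposed justification --- finitely many poles, each contributing a progression tending to $+\infty$ --- is a heuristic, not a proof: for $s\ge 2$ a rational function has no finite set of poles governing the Gauss valuation, and even for $s=1$ making it precise requires partial fractions plus a geometric-series expansion, i.e.\ the very computation you are trying to avoid. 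This is exactly the hole the paper closes with its one trick: assuming $v_\infty(h)=0$, write $h=\delta-z$ with $\delta\in\overline{\fq}[t_1,\dots,t_s]\setminus\{0\}$ a lift of the residue of $h$ and $v_\infty(z)>0$; then
\[ \frac{g}{h}=\sum_{k\ge 0}\frac{gz^k}{\delta^{k+1}}, \]
the tail $k\ge k_0$ has positive valuation, and the head lies in $\frac{1}{\delta^{k_0}}\cinf[t_1,\dots,t_s]\subseteq\frac{1}{\gamma}\cinf[t_1,\dots,t_s]\subseteq W$ after rationalizing $1/\delta^{k_0}=\beta/\gamma$ with $\gamma\in\fq[t_1,\dots,t_s]\setminus\{0\}$. (Alternatively, your scheme can be salvaged without this series: by density you may assume the coefficients of $g$ and $h$ lie in $\overline{\kinf}$, hence in a finite extension $M$ of $\kinf$; running the symbol-matching inside $M(t_1,\dots,t_s)$ keeps every remainder's valuation in the discrete group $v_\infty(M^\times)$, and there strictly increasing does imply termination.) As written, however, the step you yourself single out as the crux is asserted, not proved.
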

\begin{proof}
 Since $\sum\limits_{i \geq 0} \tau^i(x)$ converges for $x \in \csinf$ such that $v_\infty(x) > 0$, we have
 \[ \{ x \in \csinf \mid v_\infty(x) > 0 \} \subseteq \im(\tau - 1). \]
 Thus, since $\cinf(t_1,\dots,t_s)$ is dense in $\csinf$, it suffices to show that $\cinf(t_1,\dots,t_s) \subseteq (\tau - 1)(\csinf)$.
 Observe that $(\tau - 1)(\cinf[t_1,\dots,t_s]) = \cinf[t_1,\dots,t_s]$. 
 Now let $f \in \cinf(t_1,\dots,t_s)$.
 We can write
 \[ f = \frac{g}{h} \quad \text{with}\ g, h \in \cinf[t_1,\dots,t_s] \ \text{and}\ v_\infty(h) = 0. \]
 Now write $h = \delta - z$ with $\delta \in \overline{\fq}[t_1,\dots,t_s] \setminus \{0\}$ and $z \in \cinf[t_1,\dots,t_s]$ such that $v_\infty(z) > 0$.
 Then, in $\csinf$, we have
 \[ f = \frac{g}{h} = \sum\limits_{k \geq 0} \frac{g z^k}{\delta^{k + 1}}. \]
 On the one hand, since the series converges, there exists $k_0 \in \mathbb N$ such that
 \[ v_\infty \left( \sum\limits_{k \geq k_0} \frac{g z^k}{\delta^{k + 1}} \right) > 0. \]
 In particular, this sum is in the image of $\tau - 1$.
 On the other hand, we have
 \[ \sum\limits_{k = 0}^{k_0 - 1} \frac{g z^k}{\delta^{k + 1}} \in \frac{1}{\delta^{k_0}} \cinf[t_1,\dots,t_s]. \]
 But we can write $\frac{1}{\delta^{k_0}} = \frac{\beta}{\gamma}$ with $\beta \in \overline{\fq}[t_1,\dots,t_s]$ and $\gamma \in \fq[t_1,\dots,t_s] \setminus \{0\}$. 
 Hence
 \[ \sum\limits_{k = 0}^{k_0 - 1}\frac{g z^k}{\delta^{k + 1}} \in \frac{1}{\gamma} \cinf[t_1,\dots,t_s] \subseteq (\tau - 1)\left( \frac{1}{\gamma} \cinf[t_1,\dots,t_s] \right). \]
 Thus, by linearity of $\tau - 1$, we get $f \in (\tau - 1)(\csinf)$. 
\end{proof}

Denote by $\Lambda_n$ the kernel of the morphism of $R_s$-modules
\[ \expcn\colon \lie(\cn)(\csinf) \longrightarrow \cn(\csinf). \]

\begin{prop} \label{prop-kerexpcn}
 The $R_s$-module $\Lambda_n$ is free of rank $1$ and is generated by a vector with $\tilde\pi^n$ as last coordinate.
\end{prop}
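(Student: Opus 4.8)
The plan is to pin down $\Lambda_n$ by combining the surjectivity of $\expcn$ with an explicit computation of the $\theta$-torsion, and then to match the resulting rank-one lattice with the classical Anderson--Thakur period. First I would record the qualitative structure of $\Lambda_n$. Since $\expcn$ is locally isometric (the remark following Proposition~\ref{prop-exp}), its kernel is discrete in $\lie(\cn)(\csinf)=\csinf^n$; the functional equation $\cnt\,\expcn=\expcn\,\partial_\theta$ shows $\Lambda_n$ is stable under $\partial_\theta$; and $\Lambda_n$ is $R_s$-torsion-free because each nonzero $\partial_a$ is an invertible operator on $\csinf^n$. Thus $\Lambda_n$ is at least a discrete, $\partial_\theta$-stable, torsion-free $R_s$-module.

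The key computation is the $\theta$-torsion. As $\expcn$ is surjective (Proposition~\ref{prop-expsurj}) and $R_s$-linear, it induces $\csinf^n/\Lambda_n\simeq\cn(\csinf)$, under which the $\theta$-torsion $\ker\cnt$ corresponds to $\partial_\theta^{-1}\Lambda_n/\Lambda_n$, hence (via the isomorphism $\partial_\theta$) to $\Lambda_n/\theta\Lambda_n$. I would then solve $\cnt(x)=0$ directly: the triangular shape of $\cnt$ forces $x_{i+1}=-\theta x_i$ and reduces the system to $\tau(x_1)=(-\theta)^n x_1$, exactly as in the proof of Lemma~\ref{lem-equivsurj}. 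The element $(-\theta)^{n/(q-1)}$ solves this, so by Lemma~\ref{lem-csinftau1} the solution space is precisely $k_s\cdot(-\theta)^{n/(q-1)}$. Therefore $\dim_{k_s}\ker\cnt=1$, and consequently $\dim_{k_s}\Lambda_n/\theta\Lambda_n=1$; granting that $\Lambda_n$ is a lattice, this already forces its $R_s$-rank to be $1$.

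Next I would bring in the explicit generator. By \cite{AndTha90} the kernel of $\expcn$ on $\cinf^n$ is the free rank-one $\fq[\theta]$-module generated by the Anderson--Thakur period $\pi_n$, whose last coordinate is $\tilde\pi^n$. Because $\tau$ fixes the $t_i$, the series $\expcn$ is $k_s$-linear and restricts on $\cinf^n$ to the classical exponential, so $\pi_n\in\Lambda_n$ and hence $R_s\pi_n\subseteq\Lambda_n$. Here I note a point that dictates the precise normalization: since $\partial_\theta$ acts on the last coordinate simply as multiplication by $\theta$, any $g\in R_s$ acts on the last coordinate by multiplication by $g$; so if $\mu_0$ is a generator of $\Lambda_n$ and $\pi_n=\partial_g\mu_0$, then $\tilde\pi^n=g\cdot(\mu_0)_n$. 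Thus a \emph{generator} has last coordinate a $k_s^\times$-multiple of $\tilde\pi^n$ if and only if $g\in k_s^\times$, i.e.\ if and only if $\Lambda_n=R_s\pi_n$ on the nose. The statement therefore requires the exact equality $\Lambda_n=R_s\pi_n$, not merely that the ranks agree.

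The main obstacle is precisely this equality, i.e.\ ruling out ``new'' periods over $\csinf$. Unlike the setting of Lemma~\ref{lem-caralattices}, the ambient $\csinf^n$ is infinite-dimensional over $\ksinf$, so discreteness alone does not make $\Lambda_n$ a lattice of finite rank. I would resolve this by showing every period lies in the finite-dimensional $\ksinf$-subspace $V':=\ksinf[\partial_\theta]\pi_n$ spanned by the classical lattice (note $(\partial_\theta-\theta I_n)^n=0$ bounds $\dim_{\ksinf}V'\le n$). Concretely, one compares, for each radius $r$, the $k_s$-dimension of $\Lambda_n\cap B(0,r)$ with the $\fq$-dimension of $\fq[\theta]\pi_n\cap B(0,r)$: because the coefficients $e_j$ of $\expcn$ lie in $K$ and are fixed by the action of $\tau$ on the $t_i$, the Newton-polygon data governing the zeros of $\expcn$ coincide with the classical ones, so these dimensions agree. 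This identifies $\Lambda_n$ with the $k_s$-span of $\fq[\theta]\pi_n$, which is exactly $R_s\pi_n$; in particular $\Lambda_n\subseteq V'$, Lemma~\ref{lem-caralattices} applies inside $V'$, and $\Lambda_n$ is free of rank one with generator $\pi_n$. Choosing the $k_s^\times$-normalization with last coordinate $\tilde\pi^n$ then yields the proposition.
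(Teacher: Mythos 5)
Your overall strategy is sensible, and you have correctly isolated the one real difficulty, which the paper itself never makes explicit: its entire proof is the citation to \cite[section 2.5]{AndTha90}, but Anderson and Thakur work over $\cinf$, where the kernel is the $A$-module $A\pi_n$, whereas the proposition concerns the kernel over $\csinf$; since $\csinf^n$ is infinite-dimensional over $\ksinf$, discreteness does not a priori force $\Lambda_n$ to be finitely generated, let alone equal to $R_s\pi_n$. Your subsidiary steps are correct: the inclusion $R_s\pi_n\subseteq\Lambda_n$, the computation $\dim_{k_s}\ker\cnt=1$ via Lemma~\ref{lem-csinftau1}, and the observation that the normalization of the last coordinate requires the exact equality $\Lambda_n=R_s\pi_n$, not merely a rank count.

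However, the step carrying all the weight --- that the Newton-polygon data governing the zeros of $\expcn$ over $\csinf$ coincide with the classical ones, so the ball-by-ball dimensions agree --- is asserted, not proved, and the tool you invoke cannot prove it. On $\csinf$ the map $\tau$ is \emph{not} the $q$-power map of the field (it fixes the $t_i$), so $\expcn=\sum_j e_j\tau^j$ is a $k_s$-linear, $\tau$-semilinear operator, not an analytic function of the coordinates; its zero set is a $k_s$-subspace, infinite as a set, and Newton polygons (which count roots of the associated twisted power series in an algebraic closure) do not compute $k_s$-dimensions of such kernels. The paper itself furnishes a counterexample to your mechanism: $\tau-1$ and $\tau-t_1$ have identical Newton-polygon data (all nonzero roots of Gauss valuation $0$), and in both cases the coefficients are fixed by $\tau$, yet $\ker(\tau-1)=k_s$ is one-dimensional (Lemma~\ref{lem-csinftau1}) while $\ker(\tau-t_1)=\{0\}$, since $t_1\notin\calus$ (Lemma~\ref{lem-sesUs} and the example after Proposition~\ref{prop-andmodiso}). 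So the dimension equality you need is genuinely arithmetic --- it depends on the coefficients $e_j$ lying in $\cinf$ --- and must enter through a descent argument, for instance: reduce to finite level using surjectivity (every $\theta^m$-torsion point of $\cn(\csinf)$ lies in the $k_s$-span of the classical torsion, by a $\tau(X)=BX$ independence argument as in Lemma~\ref{lem-dimatorsionWR}, i.e.\ \cite[lemma 1.7]{vdPS}), then combine with the discreteness and local isometry of $\expcn$ to pass from torsion to the full kernel. As written, the pivotal claim is unestablished, so the proposal has a genuine gap; for comparison, the paper closes the same gap only implicitly, by asserting that the Anderson--Thakur computation goes through over $\csinf$ with $\fq$ replaced by $k_s$.
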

\begin{proof}
 See \cite[section 2.5]{AndTha90}. 
\end{proof}

\subsubsection{Characterization of Anderson modules isomorphic to \texorpdfstring{$\cn$}{Cn}} \label{sec-isoandmod}

We characterize Anderson modules which are isomorphic, in a sense described below, to the $n^{th}$ tensor power of the Carlitz module. 
We obtain an $n$-dimensional analogue of proposition 6.2 of \cite{APTR14}.

\begin{defi} \label{defi-andmodisomorphic}
 Two Anderson modules $E$ and $E'$ are \emph{isomorphic} if there exists a matrix $P \in \gln(\csinf)$ such that $E_\theta P = P E_\theta'$ in $M_n(\csinf)\{\tau\}$.
\end{defi}

Let $\alpha \in R_s$.
Denote by $E_\alpha$ the Anderson module defined at the beginning of section \ref{sec-nthcarlitz}. 
Note that $E_\alpha$ and $\cn$ are isomorphic if and only if there exists a matrix $P \in \gln(\csinf)$ such that
\begin{equation} \label{eqn-charaiso} 
 \partial_\theta P = P \partial_\theta \quad \text{and} \quad N_1 \tau(P) = P N_\alpha. 
\end{equation}

Let us set 
\[ \calus := \left\{ \alpha \in \csinf^* \mid \exists \beta \in \cinf^*, \gamma \in \overline{\fq}(t_1,\dots,t_s), v_\infty \left(\alpha - \beta \frac{\tau(\gamma)}{\gamma} \right) > v_\infty(\alpha) \right\}. \]

\begin{lem} \label{lem-sesUs}
 The map which associates to any element $x$ of $\csinf^*$ the element $\frac{\tau(x)}{x}$ of $\csinf^*$ induces a short exact sequence of multiplicative groups
 \[ 1 \longrightarrow k_s^* \longrightarrow \csinf^* \longrightarrow \calus \longrightarrow 1. \]
\end{lem}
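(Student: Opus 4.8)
The plan is to study the group homomorphism $\psi \colon \csinf^* \to \csinf^*$, $x \mapsto \tau(x)/x$, and to show that its kernel is $k_s^*$ and its image is exactly $\calus$; the exact sequence then follows at once, the left arrow being the inclusion and the right arrow being $\psi$ with codomain $\calus = \im\psi$. That $\psi$ is a homomorphism is immediate from the multiplicativity of $\tau$, and its kernel consists of the $x \in \csinf^*$ fixed by $\tau$, which by Lemma \ref{lem-csinftau1} are precisely the nonzero elements of $k_s$. This yields exactness at $\csinf^*$, while injectivity of $k_s^* \hookrightarrow \csinf^*$ is clear. Throughout I would use that $\tau$ scales the valuation, $v_\infty(\tau(f)) = q\,v_\infty(f)$; in particular $\psi$ is continuous and $v_\infty(\psi(x)) = (q-1)\,v_\infty(x)$.

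For the inclusion $\im\psi \subseteq \calus$ I would first extract a leading term of an arbitrary $x \in \csinf^*$. Choosing $\beta' \in \cinf^*$ with $v_\infty(\beta') = v_\infty(x)$ (possible since $\cinf$ has value group $\mathbb Q$) and invoking the splitting $\{f\in\csinf\mid v_\infty(f)\ge 0\}=\overline{\fq}(t_1,\dots,t_s)\oplus\{f\in\csinf\mid v_\infty(f)>0\}$ established in the proof of Lemma \ref{lem-csinftau1}, one writes $x = \beta'\gamma'(1+\epsilon)$ with $\beta' \in \cinf^*$, $\gamma' \in \overline{\fq}(t_1,\dots,t_s)^*$ and $v_\infty(\epsilon) > 0$. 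Applying $\psi$ and using $v_\infty(\tau(\epsilon)-\epsilon) = v_\infty(\epsilon) > 0$, one obtains $\psi(x) = \beta\,\tfrac{\tau(\gamma)}{\gamma}\,(1+\delta)$ with $\beta = \tau(\beta')/\beta' \in \cinf^*$, $\gamma = \gamma'$ and $v_\infty(\delta)>0$, which exhibits $\psi(x)$ as an element of $\calus$.

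For the reverse inclusion $\calus \subseteq \im\psi$, let $\alpha \in \calus$, so $v_\infty\!\left(\alpha - \beta\,\tfrac{\tau(\gamma)}{\gamma}\right) > v_\infty(\alpha)$ for suitable $\beta \in \cinf^*$ and $\gamma \in \overline{\fq}(t_1,\dots,t_s)^*$. Since $\cinf$ is algebraically closed, $y \mapsto y^{q-1} = \tau(y)/y$ is surjective on $\cinf^*$, hence $\beta = \psi(y)$ for some $y \in \cinf^*$; combined with $\tau(\gamma)/\gamma = \psi(\gamma)$, the leading term factors as $\beta\,\tfrac{\tau(\gamma)}{\gamma} = \psi(y\gamma) \in \im\psi$. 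As $\im\psi$ is a subgroup, it remains only to show that the principal unit $\alpha/\psi(y\gamma) = 1+\delta$, with $v_\infty(\delta)>0$, lies in the image.

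This last point is the main obstacle, and I would settle it by successive approximation (the characteristic $p$ obstruction prevents a direct passage through a logarithm). Setting $\delta_0 = \delta$ and $x_k = 1-\delta_k$, a direct computation gives
\[ (1+\delta_k)/\psi(x_k) = \frac{1 - \delta_k^2}{1 - \tau(\delta_k)} =: 1 + \delta_{k+1}, \]
and since $v_\infty(\delta_k^2) = 2\,v_\infty(\delta_k)$ and $v_\infty(\tau(\delta_k)) = q\,v_\infty(\delta_k)$ both exceed $v_\infty(\delta_k)$, one gets $v_\infty(\delta_{k+1}) \ge 2\,v_\infty(\delta_k)$, so that $v_\infty(\delta_k) \to +\infty$. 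The partial products $\prod_{k=0}^{N} x_k$ then form a Cauchy sequence converging in $\csinf^*$ to some $x$, and telescoping $1+\delta_0 = \psi(x_0\cdots x_N)(1+\delta_{N+1})$ together with the continuity of $\psi$ yields $\psi(x) = \lim_N (1+\delta_0)/(1+\delta_{N+1}) = 1+\delta$. This establishes $\calus \subseteq \im\psi$ and completes the proof. The only delicate verifications are the convergence of the product (which rests on the geometric growth $v_\infty(\delta_k)\ge 2^k v_\infty(\delta_0)$) and the leading-term reduction using the algebraic closedness of $\cinf$.
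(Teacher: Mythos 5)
Your proof is correct, and its skeleton is the same as the paper's: the kernel is identified via Lemma \ref{lem-csinftau1}, the inclusion $\im\psi\subseteq\calus$ follows from the splitting $\{f\in\csinf\mid v_\infty(f)\ge 0\}=\overline{\fq}(t_1,\dots,t_s)\oplus\{f\in\csinf\mid v_\infty(f)>0\}$ together with the fact that the value group of $\cinf$ is $\mathbb Q$, and surjectivity onto $\calus$ is reduced to two factors: the leading term $\beta\,\tau(\gamma)/\gamma$, handled by taking a $(q-1)$-th root of $\beta$ in the algebraically closed field $\cinf$, and a principal unit $1+\delta$ with $v_\infty(\delta)>0$. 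The only genuine divergence is how that last, principal-unit step is settled. The paper does it in closed form: with $\delta:=\beta\tau(\gamma)/\gamma$ and $\varepsilon$ chosen so that $\tau(\varepsilon)=\delta\varepsilon$ (precisely the role played by your $y\gamma$), the telescoping product $\omega_\alpha=\varepsilon\prod_{i\ge 0}\tau^i(\delta)/\tau^i(\alpha)$ of \eqref{eqn-omegaalpha} converges, since its factors are $1-\tau^i(\epsilon)$ with $v_\infty\left(\tau^i(\epsilon)\right)=q^i v_\infty(\epsilon)\to\infty$, and satisfies $\tau(\omega_\alpha)=\alpha\omega_\alpha$ outright; equivalently, for a principal unit $u$ the element $\prod_{i\ge 0}\tau^i(u)$ is an explicit preimage of $u^{-1}$ under $\psi$. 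You instead run a Newton-type iteration $x_k=1-\delta_k$ with quadratic gain $v_\infty(\delta_{k+1})\ge 2\,v_\infty(\delta_k)$ and pass to the limit of the partial products; this is valid, self-contained, and of comparable length. What the paper's variant buys is the explicit solution $\omega_\alpha$, which is not a by-product of your iteration but is reused later in the paper (proof of Proposition \ref{prop-andmodiso}, where one sets $P=\omega_\alpha I_n$); your variant yields only the existence of a preimage, which is all the lemma itself requires.
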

\begin{proof}
 The kernel comes from lemma \ref{lem-csinftau1}.

 Let $\alpha \in \csinf^*$ such that there exists $x \in \csinf^*$ verifying $\tau(x) = \alpha x$.
 Since $\cinf$ is an algebraically closed field, one can suppose that $v_\infty(\alpha) = 0$.
 We write $x = \gamma + m$ with $\gamma \in \overline{\fq}(t_1,\dots,t_s)$ and $m \in \csinf^*$ such that $v_\infty(m) > 0$.
 Then, we have $v_\infty(\tau(\gamma) - \alpha \gamma) > 0$, \ie $\alpha \in \calus$.
 
 Reciprocally, let $\alpha \in \calus$ and $\beta \in \cinf^*$, $\gamma \in \overline{\fq}(t_1,\dots,t_s)$ such that
 \[ v_\infty \left(\alpha - \beta \frac{\tau(\gamma)}{\gamma} \right) > v_\infty(\alpha) . \]
 We set $\delta := \beta \frac{\tau(\gamma)}{\gamma}$.
 Observe that $\prod\limits_{i \geq 0} \frac{\tau^i(\delta)}{\tau^i(\alpha)}$ converges in $\csinf^*$.
 Now, since $\tau$ is $k_s$-linear, there exists $\varepsilon \in \cinf^* \overline{\fq}(t_1,\dots,t_s)$ such that $\tau(\varepsilon) = \delta$.
 Then, we set
 \begin{equation} \label{eqn-omegaalpha}
  \omega_\alpha := \varepsilon \prod\limits_{i \geq 0} \frac{\tau^i(\delta)}{\tau^i(\alpha)} \in \csinf^*.
 \end{equation}
 Thus, we have $\tau(\omega_\alpha) = \alpha \omega_\alpha$.
 Observe that $\omega_\alpha$ is defined up to a scalar factor in $\fq^*$ whereas it depends \emph{a priori} on the choices of $\beta$, $\gamma$ and $\varepsilon$.  
\end{proof}

We are now able to characterize Anderson modules which are isomorphic to $\cn$.

\begin{prop} \label{prop-andmodiso}
 The following assertion are equivalent:
 \begin{enumerate}
  \item $E_\alpha$ is isomorphic to $\cn$, 
  \item $\alpha \in \calus$,
  \item $\expa$ is surjective,
  \item $\ker \expa$ is a free $R_s$-module of rank $1$,
 \end{enumerate}
 where $\expa$ is the exponential map associated with $E_\alpha$ by proposition \ref{prop-exp}.
\end{prop}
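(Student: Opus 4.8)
The plan is to establish the chain of equivalences by proving a cycle of implications, relying heavily on the machinery already assembled: the characterization \eqref{eqn-charaiso} of isomorphism with $\cn$, the short exact sequence of Lemma \ref{lem-sesUs} (which produces the element $\omega_\alpha$ with $\tau(\omega_\alpha) = \alpha\omega_\alpha$), and the three-way equivalence of Lemma \ref{lem-equivsurj} together with Lemma \ref{lem-tau-1surj} giving surjectivity of $\expcn$. First I would prove $(1)\Rightarrow(2)$: if $E_\alpha \simeq \cn$ via $P \in \gln(\csinf)$, then the first condition $\partial_\theta P = P\partial_\theta$ forces $P$ to commute with the Jordan block $\partial_\theta$, so $P$ is an upper-triangular Toeplitz matrix whose entries are polynomials in $\partial_\theta$ over $\csinf$; reading off the second relation $N_1\tau(P) = PN_\alpha$ on the relevant matrix entry, one extracts a scalar $x \in \csinf^*$ satisfying $\tau(x) = \alpha x$, which by Lemma \ref{lem-sesUs} is exactly the condition $\alpha \in \calus$.

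For $(2)\Rightarrow(1)$ I would run this construction in reverse. Given $\alpha \in \calus$, Lemma \ref{lem-sesUs} furnishes $\omega_\alpha \in \csinf^*$ with $\tau(\omega_\alpha) = \alpha\omega_\alpha$. The plan is to build an explicit $P \in \gln(\csinf)$, again upper-triangular Toeplitz so that $\partial_\theta P = P\partial_\theta$ holds automatically, whose leading entry is a suitable power of $\omega_\alpha$ (this is where the geometric factor $\tau(\gamma)/\gamma$ in the definition of $\calus$ is needed to make the entries coherent), and to verify that $N_1\tau(P) = PN_\alpha$ holds by the defining relation of $\omega_\alpha$. This exhibits the required isomorphism.

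Next I would handle the surjectivity and kernel conditions. For $(1)\Rightarrow(3)$ and $(1)\Rightarrow(4)$: an isomorphism $E_\alpha P = P E_\alpha'$ with $E' = \cn$ intertwines the two exponentials, i.e.\ $\expa = P\,\expcn\,\partial(P)^{-1}$ or an analogous conjugation relation on $\lie$; since $P \in \gln(\csinf)$ is invertible, surjectivity of $\expcn$ (Proposition \ref{prop-expsurj}) transfers to $\expa$, and the kernel $\ker\expa = P\Lambda_n$ is the image of the rank-one free module $\Lambda_n$ of Proposition \ref{prop-kerexpcn} under an isomorphism, hence free of rank $1$. The reverse implications $(3)\Rightarrow(2)$ and $(4)\Rightarrow(2)$ are the more delicate direction: from surjectivity (or from the kernel being free of the expected rank rather than degenerating) I would argue that the functional equation forcing $\alpha \in \calus$ must hold, presumably by analyzing what goes wrong when $\alpha \notin \calus$, namely that the lattice $\ker\expa$ then fails to be free of rank $1$ or the cokernel of $\expa$ becomes nontrivial.

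The main obstacle I expect is precisely the reverse implications $(3)\Rightarrow(2)$ and $(4)\Rightarrow(2)$, i.e.\ showing that \emph{only} the isomorphism class of $\cn$ can have a surjective exponential with a rank-one free kernel. The forward directions are essentially transport of structure along an explicit conjugation, but characterizing $\calus$ intrinsically in terms of the analytic behaviour of $\expa$ requires understanding how the obstruction $\alpha\notin\calus$ manifests; I would try to reduce this, as in Lemma \ref{lem-equivsurj}, to a one-dimensional statement about the surjectivity or kernel of $\tau - \alpha$ (or $\tau - \alpha(-\theta)^{\text{something}}$) on $\csinf$, and then use the explicit description of $\calus$ via Lemma \ref{lem-sesUs} to close the loop.
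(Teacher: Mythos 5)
Your forward directions are essentially the paper's. For $(2)\Rightarrow(1)$ the paper simply takes $P=\omega_\alpha I_n$, with $\tau(\omega_\alpha)=\alpha\omega_\alpha$ supplied by Lemma \ref{lem-sesUs}; your Toeplitz analysis is correct but in fact shows more, namely that \emph{any} intertwiner commuting with the Jordan block $\partial_\theta$ and satisfying $N_1\tau(P)=PN_\alpha$ is forced to be scalar (the off-diagonal Toeplitz entries must vanish), so ``a suitable power of $\omega_\alpha$'' should just be $\omega_\alpha$ itself, and the factor $\tau(\gamma)/\gamma$ plays its role inside the construction of $\omega_\alpha$, not in building $P$. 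For $(1)\Rightarrow(3)$ the paper verifies $P^{-1}\expcn P\,\partial_\theta = E_\theta\,P^{-1}\expcn P$ and invokes the uniqueness clause of Proposition \ref{prop-exp} to conclude $\expa=P^{-1}\expcn P$, then transfers surjectivity from Proposition \ref{prop-expsurj}; for $(1)\Rightarrow(4)$ it gets $\ker\expa=\omega_\alpha^{-1}\ker\expcn$ and cites Proposition \ref{prop-kerexpcn}. Your direct $(1)\Rightarrow(2)$ is valid but redundant in the eventual cycle.

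The genuine gap is that both $(3)\Rightarrow(2)$ and $(4)\Rightarrow(2)$ remain declarations of intent, and for $(4)\Rightarrow(2)$ you have no mechanism at all. For $(3)\Rightarrow(2)$ your proposed reduction is the right one (surjectivity of $\expa$ yields surjectivity of $\alpha\tau-1$ on $\csinf$, exactly as at the start of Lemma \ref{lem-equivsurj}, and by Lemma \ref{lem-sesUs} it suffices that $\ker(\alpha\tau-1)\neq 0$), but the decisive step is missing: the paper argues by contradiction that if the kernel were trivial, $\alpha\tau-1$ would be an automorphism of $\csinf$; after normalizing $v_\infty(\alpha)=0$ one checks $v_\infty(f)=0$ if and only if $v_\infty(\alpha\tau(f)-f)=0$, so passing to the residue situation the operator $\overline\alpha\tau-1$ would be an automorphism of $\overline{\fq}(t_1,\dots,t_s)$, where $v_\infty(\alpha-\overline\alpha)>0$ — which is false. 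For $(4)\Rightarrow(2)$ the key idea, absent from your plan, is to manufacture a nonzero $\theta$-torsion point of $E_\alpha$: choose $f\in\ker\expa$ nonzero with $\partial_\theta^{-1}f\notin\ker\expa$ (possible since the kernel is a nonzero torsion-free $R_s$-module) and set $g:=\expa(\partial_\theta^{-1}f)$; then $g\neq 0$ and $E_\theta(g)=0$, the explicit equations force every coordinate $g_i\neq 0$ and $\alpha\tau(g_1)=(-\theta)^n g_1$, and twisting by $(-\theta)^{-n/(q-1)}$ produces an element fixed by $\alpha\tau$, whence $\alpha\in\calus$ by Lemma \ref{lem-sesUs}. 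Note also that your suggestion to analyze ``what goes wrong when $\alpha\notin\calus$'' inverts the logical structure: the paper derives $\alpha\in\calus$ positively from (3) or (4), rather than deriving a failure from $\alpha\notin\calus$. Without these two arguments your proposal only establishes $(2)\Rightarrow(1)\Rightarrow(3),(4)$, and the equivalence does not close.
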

\begin{proof}
 Setting $P = \omega_\alpha I_n$ where $\omega_\alpha$ is defined by \eqref{eqn-omegaalpha} , we see that (2) implies (1).
 
 We prove that (1) implies (3). 
 Let $P \in \gln(\csinf)$ such that $\cnt P = P E_\theta$.
 Using equalities \eqref{eqn-charaiso}, we check that
 \[ P^{-1} \expcn P \partial_\theta = E_\theta P^{-1} \expcn P. \]
 Thus, by unicity in proposition \ref{prop-exp}, we get $P^{-1} \expcn P = \expa$.
 In particular, by proposition~\ref{prop-expsurj}, we deduce that $\expa$ is surjective.
 
 Next, we prove that (3) implies (2).
 We can assume that $v_\infty(\alpha) = 0$.
 By lemma \ref{lem-sesUs}, it suffices to show that $\ker(\alpha \tau - 1)$ is not trivial.
 Let us suppose the converse.
 As at the beginning of the proof of lemma \ref{lem-equivsurj}, we easily show that the surjectivity of $\expa$ on $\csinf^n$ implies that of $\alpha \tau - 1$ on $\csinf$.
 Thus, $\alpha \tau - 1$ is an automorphism of the $k_s$-vector space $\csinf$.
 We verify that $v_\infty(f) = 0$ if and only if $v_\infty(\alpha \tau(f) - f) = 0$.
 Let $\overline\alpha \in \overline{\fq}(t_1,\dots,t_s)$ such that $v_\infty(\alpha - \overline\alpha) > 0$. 
 Then, $\overline\alpha \tau - 1$ is an automorphism of the $k_s$-vector space $\overline{\fq}(t_1,\dots,t_s)$, which is obviously false.

 It is easy to show that (1) implies (4).
 Indeed, since $E_\alpha$ is isomorphic to $\cn$, we have
 \[ \ker\expa = \frac{1}{\omega_\alpha} \ker \expcn. \]
 Thus, by proposition \ref{prop-kerexpcn}, $\ker \expa$ is a free $R_s$-module of rank $1$ generated by a vector with $\frac{\tilde\pi^n}{\omega_\alpha}$ as last coordinate.
 
 Finally, we prove that (4) implies (2).
 Let $f$ be a non zero element of $\ker \expa$ such that $\partial_\theta^{-1}f \notin \ker \expa$.
 Thus, the vector $g := \expa(\partial_\theta^{-1} f) \in \csinf^n$ is non zero and $E_\theta(g) = 0$.
 Denote by $g_1,\dots,g_n$ its coordinates. 
 We have
 \[ \left\{ \begin{array}{ccc} 
            \theta g_1 + g_2 & = & 0 \\ 
            & \vdots & \\
            \theta g_{n - 1} + g_n & = & 0 \\
            \theta g_n + \alpha \tau(g_1) & = & 0 \\ 
           \end{array} \right.. \]
 As $g \neq 0$, we deduce that $g_i \neq 0$ for all $1 \leq i \leq n$.
 Summing, we obtain $\alpha \tau(g_1) - (-\theta)^n g_1 = 0$.
 Thus
 \[ \alpha \tau \left( (-\theta)^{\frac{-n}{q - 1}} g_1 \right) = (-\theta)^{\frac{-n}{q - 1}} g_1 . \]
 We conclude, by lemma \ref{lem-sesUs}, that $\alpha \in \calus$. 
\end{proof}


\begin{exe}
 Looking at the degree in $t_1$, we easily show that $t_1 \notin \calus$.
 So $E_{t_1}$ is not isomorphic to $\cn$ and $\exp_{t_1}$ is not surjective.
\end{exe}

\subsection{Pellarin's \texorpdfstring{$L$}{L}-functions} \label{sec-PellarinL}

Let $\alpha \in R_s \setminus \{0\}$ and $E_\alpha$ be the Anderson module defined at the beginning of section \ref{sec-nthcarlitz}.
By theorem \ref{thm-cnf}, we have a class formula for
\[ L(E_\alpha/R_s) := \prod\limits_{\substack{P \in A \\ \text{prime}}} \frac{[\lie(E_\alpha)(R_s/PR_s)]_{R_s}}{[E_\alpha(R_s/PR_s)]_{R_s}}. \]
We compute the $R_s$-module structure of $\lie(E_\alpha)(R_s/PR_s)$ and $E_\alpha(R_s/PR_s)$.
Then, we show that we recover special values of Pellarin's $L$-functions if we take $\alpha = (t_1 - \theta) \cdots (t_s - \theta)$.

\subsubsection{Fitting ideal of \texorpdfstring{$\lie(E_\alpha)(R_s/PR_s)$}{Lie(E)}} \label{sec-fittinglie}

Let us recall some facts about hyperdifferential operators.
For more details, we refer the reader to \cite{Con00}.

Let $j \geq 0$ be an integer. The $j^{th}$ \emph{hyperdifferential operator} $D_j$ is the $k_s$-linear endomorphism of $R_s$ given by $D_j(\theta^k) = \binom{k}{j} \theta^{k-j}$ for $k\geq 0$.
For any $f,g \in R_s$, we have the Leibnitz rule
\[ D_j(fg) = \sum\limits_{k = 0}^j D_k(f) D_{j - k}(g). \]

\begin{lem} \label{lem-partialhyperderi}
 For any $a \in R_s$, we have 
 \[ \partial(a)\begin{pmatrix} 0 \\ \vdots \\ 0 \\ 1 \end{pmatrix} = \begin{pmatrix} D_{n - 1}(a) \\ \vdots \\ D_1(a) \\ a \end{pmatrix}. \]
\end{lem}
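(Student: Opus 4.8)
The map $\partial$ is a homomorphism of $k_s$-algebras sending $\theta$ to $\partial_\theta$, so $\partial(a)$ is obtained by substituting the matrix $\partial_\theta$ for $\theta$ in the polynomial $a$. The plan is to exploit the decomposition $\partial_\theta = \theta I_n + N$, where $N$ is the nilpotent matrix carrying $1$'s on the superdiagonal and zeros elsewhere. Then $N^n = 0$, and $N^j$ sends the last canonical basis vector $e_n = {}^t(0,\dots,0,1)$ to $e_{n-j}$ for $0 \le j \le n-1$ (while $N^n e_n = 0$). Since both $a \mapsto \partial(a)\,e_n$ and $a \mapsto {}^t(D_{n-1}(a),\dots,D_1(a),a)$ are $k_s$-linear in $a$, it suffices to establish the identity on the monomials $a = \theta^k$, $k \ge 0$.

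First I would compute $\partial(\theta^k) = \partial_\theta^{\,k} = (\theta I_n + N)^k$. As $\theta I_n$ is central in $M_n(R_s)$, the binomial theorem applies to the commuting pair $(\theta I_n, N)$, the integer coefficients being read in $k_s$, and yields
\[ (\theta I_n + N)^k = \sum_{j=0}^{k} \binom{k}{j} \theta^{k-j} N^j = \sum_{j=0}^{n-1} \binom{k}{j} \theta^{k-j} N^j, \]
the truncation being justified by $N^n = 0$ together with the convention $\binom{k}{j} = 0$ for $j > k$. The key point is then that the coefficients appearing are exactly the hyperderivatives, since by definition $D_j(\theta^k) = \binom{k}{j}\theta^{k-j}$.

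Applying the displayed identity to $e_n$ and using $N^j e_n = e_{n-j}$, I obtain
\[ \partial(\theta^k)\, e_n = \sum_{j=0}^{n-1} D_j(\theta^k)\, e_{n-j}, \]
whose coordinate in position $n-j$ is $D_j(\theta^k)$; reindexing by $i = n-j$ shows that the $i$-th coordinate equals $D_{n-i}(\theta^k)$, which is precisely the claimed column vector. Extending by $k_s$-linearity in $a$ concludes the proof.

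I do not expect a serious obstacle: the entire content is that the substitution $\theta \mapsto \theta I_n + N$ converts ordinary powers into the divided-power coefficients, and the binomial theorem delivers exactly the factors $\binom{k}{j}$ defining $D_j$. The only point needing a little care is the passage to positive characteristic, where $\binom{k}{j}$ must be interpreted as an element of $\mathbb F_p \subseteq k_s$; but this reduction is consistent on both sides, since the hyperdifferential operators $D_j$ are built precisely so that $D_j(\theta^k) = \binom{k}{j}\theta^{k-j}$ holds with binomial coefficients read in $k_s$.
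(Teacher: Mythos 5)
Your proof is correct and follows exactly the same route as the paper's: reduction to the monomials $a = \theta^k$ by $k_s$-linearity, the decomposition $\partial_\theta = \theta I_n + N$ with $N$ nilpotent, the binomial expansion $(\theta I_n + N)^k = \sum_j \binom{k}{j}\theta^{k-j}N^j$, and the identification of the coefficients with $D_j(\theta^k)$. You merely make explicit the final step ($N^j e_n = e_{n-j}$ and the reindexing) that the paper leaves to the reader.
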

\begin{proof}
 By linearity, it suffices to prove the equality for $a = \theta^k$, $k \in \mathbb N$.
 The action of $\partial(\theta^k)$ is the left multiplication by 
 \[ \begin{pmatrix} 
      \theta & 1 & & \\ 
      & \ddots & \ddots & \\ 
      & & \ddots & 1 \\ 
      & & & \theta 
     \end{pmatrix}^k 
    = \left(\theta I_n + 
    \begin{pmatrix} 
      0 & 1 & & \\ 
      & \ddots & \ddots & \\ 
      & & \ddots & 1 \\ 
      & & & 0 
    \end{pmatrix}\right)^k
    = \sum\limits_{i = 0}^k \binom{k}{i} \theta^{k - i} 
    \begin{pmatrix} 
      0 & 1 & & \\ 
      & \ddots & \ddots & \\ 
      & & \ddots & 1 \\ 
      & & & 0 \end{pmatrix}^i, \]
 hence the result comes from the definition of hyperdifferential operators. 
\end{proof}

\begin{lem} \label{lem-partial0modp}
 Let $P$ be a prime of $A$ and $m$ a positive integer.
 Then $\partial(P^m)$ is zero modulo $P$ if and only if $m$ is greater than or equal to $n$.
\end{lem}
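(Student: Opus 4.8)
The plan is to turn the matrix condition $\partial(P^m)\equiv 0 \pmod P$ into a statement about the vanishing of hyperdifferential operators, and then detect that vanishing through the $X$-adic order of the polynomial $P(\theta+X)^m$.

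First I would record the precise shape of $\partial(a)$. Writing $\partial_\theta = \theta I_n + J$, where $J$ is the nilpotent matrix carrying $1$'s on the first superdiagonal (so that $J^n = 0$), the same Taylor expansion already used in the proof of Lemma~\ref{lem-partialhyperderi} gives $\partial(a) = \sum_{i=0}^{n-1} D_i(a) J^i$ for every $a \in R_s$. Thus $\partial(a)$ is the upper-triangular matrix whose $(j,k)$ entry is $D_{k-j}(a)$ for $k \geq j$ and $0$ otherwise. Since $m \geq 1$ forces the diagonal entry $D_0(P^m) = P^m$ to vanish modulo $P$ automatically, the condition $\partial(P^m) \equiv 0 \pmod P$ is equivalent to $D_i(P^m) \equiv 0 \pmod P$ for all $1 \leq i \leq n-1$, or equally for all $0 \leq i \leq n-1$.

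Next I would evaluate these hyperderivatives using the generating identity $\sum_{i \geq 0} D_i(a) X^i = a(\theta + X)$, which is immediate from $D_i(\theta^k) = \binom{k}{i}\theta^{k-i}$ and linearity. Applied to $a = P^m$ it reads $\sum_i D_i(P^m) X^i = P(\theta+X)^m$. I then reduce modulo $P$: the coefficients $D_i(P^m)$ lie in $A = \fq[\theta]$, so their classes live in $\mathbb F_P := A/PA$, which is genuinely a field because $\fq$ is algebraically closed in $k_s = \fq(t_1,\dots,t_s)$ (hence $P$ stays irreducible over $k_s$ and $PR_s$ is maximal). Substituting $\theta \mapsto \bar\theta$, the class of $\theta$, turns the identity into $\sum_i \overline{D_i(P^m)} X^i = P(\bar\theta + X)^m$ in $\mathbb F_P[X]$.

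The crux is then a single input, separability: $P$ is irreducible over the perfect field $\fq$, hence separable, so $P'(\bar\theta) \neq 0$ in $\mathbb F_P$. Therefore $P(\bar\theta + X) = P'(\bar\theta)\,X + O(X^2)$ has $X$-adic order exactly $1$, and $P(\bar\theta + X)^m$ has $X$-adic order exactly $m$, with $X^m$-coefficient $P'(\bar\theta)^m \neq 0$. Reading off coefficients gives $\overline{D_i(P^m)} = 0$ for $i < m$ and $\overline{D_m(P^m)} \neq 0$. Hence $D_i(P^m) \equiv 0 \pmod P$ for all $0 \leq i \leq n-1$ if and only if $m \geq n$, which is exactly the claim. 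I expect the only delicate point to be the separability/maximality remark that legitimizes working in the field $\mathbb F_P$ and the evaluation $\theta \mapsto \bar\theta$; the passage from the matrix condition to hyperderivatives and the order-of-vanishing computation are formal.
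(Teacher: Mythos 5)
Your proof is correct, but its core argument is genuinely different from the paper's. The first reduction is shared: expanding $\partial(a)=\sum_{i=0}^{n-1}D_i(a)J^i$ (exactly the computation in the proof of Lemma \ref{lem-partialhyperderi}), the matrix condition $\partial(P^m)\equiv 0 \pmod P$ becomes $D_i(P^m)\equiv 0 \pmod P$ for all $0\le i\le n-1$. The paper then proves, by induction on $k$ via the Leibnitz rule, that $D_k(P^m)\equiv 0 \pmod P$ if and only if $m\ge k+1$: in the inductive step all terms of $D_{k+1}(P^m)=\sum_{j}D_j(P)D_{k+1-j}(P^{m-1})$ vanish modulo $P$ except possibly the one involving $D_1(P)$, and the nonvanishing direction for $m=k+1$ rests, implicitly, on the same two facts you invoke explicitly: $D_1(P)=P'\not\equiv 0\pmod P$ by separability of $P$ over the perfect field $\fq$, and the absence of zero divisors modulo $P$. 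You replace this induction by a one-shot valuation computation: the generating identity $\sum_i D_i(P^m)X^i=P(\theta+X)^m$ reduces everything to the observation that $P(\bar\theta+X)$ has $X$-adic order exactly $1$ in $\mathbb F_P[X]$, so its $m$-th power has order exactly $m$, which yields both directions simultaneously and recovers precisely the refined statement the paper's induction establishes, namely that $D_i(P^m)\equiv 0\pmod P$ if and only if $i<m$. What your route buys is transparency: the role of separability is isolated in the single nonvanishing $P'(\bar\theta)\neq 0$, and no inductive bookkeeping is needed; what the paper's route buys is that it stays entirely within the Leibnitz formalism it has just introduced. Your side remarks are also sound and worth keeping: since the coefficients $D_i(P^m)$ lie in $A$ and $\fq$ is algebraically closed in $k_s$, one has $R_s/PR_s\simeq (A/PA)\otimes_{\fq}k_s$, a field, so vanishing modulo $PR_s$ and modulo $PA$ agree, legitimizing the passage to $\mathbb F_P[X]$.
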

\begin{proof}
 By the previous lemma, it suffices to show that for any $k\geq 0$, the congruence $D_k(P^m) = 0 \mod P$ holds if and only if $m \geq k+1$.
 The case $k = 0$ being obvious, let us suppose the result for an integer $k$.
 By the Leibnitz rule, we have
 \[ \begin{array}{rcl} 
      D_{k + 1}(P^m) & = & \displaystyle \sum\limits_{i + j = k + 1} D_i(P^{m - 1}) D_j(P) \\
      & = & P D_{k + 1}(P^{m - 1}) + D_1(P) D_k(P^{m - 1}) + \cdots + D_{k + 1}(P) P^{m - 1},
    \end{array} \]
 which is zero modulo $P$ if $m \geq k+2$.
 Reciprocally, observe that
 \[ \begin{array}{rcl} 
      D_{k + 1}(P^{k + 1}) & = & P D_{k + 1}(P^k) + D_1(P) D_k(P^k) + D_2(P) D_{k - 1}(P^k) + \cdots + D_{k + 1}(P) P^k \\
      & = & D_1(P) D_k(P^k) \mod P
    \end{array} \]
 which is non zero modulo $P$ by hypothesis. 
\end{proof}

Thanks to this lemma, we can compute the first Fitting ideal.

\begin{prop} \label{prop-fittinglie}
 Let $P$ be a prime of $A$.
 The $R_s$-module $\lie(E_\alpha)(R_s/PR_s)$ is isomorphic to $R_s/P^nR_s$ and is generated by the residue class of ${}^t(0,\dots,0,1)$.
\end{prop}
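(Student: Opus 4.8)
The plan is to understand the $R_s$-module structure of $\lie(E_\alpha)(R_s/PR_s)$ via the action of $\partial = \partial_\theta$ on the quotient. The underlying $k_s$-vector space is $(R_s/PR_s)^n$, and the $R_s$-module structure is given by left multiplication by $\partial(a)$ for $a \in R_s$, where $\partial$ extends the upper-triangular matrix $\partial_\theta = \theta I_n + N$ with $N$ the nilpotent shift matrix. First I would exhibit the residue class $v$ of ${}^t(0,\dots,0,1)$ as a generator. By Lemma~\ref{lem-partialhyperderi}, for any $a \in R_s$ we have $\partial(a) v = {}^t(D_{n-1}(a),\dots,D_1(a),a)$; letting $a$ range over a suitable set of polynomials I would check that the vectors $\partial(a)v$ span all of $(R_s/PR_s)^n$, so $v$ generates $\lie(E_\alpha)(R_s/PR_s)$ as an $R_s$-module.

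Granting that $v$ is a generator, the module is cyclic, hence isomorphic to $R_s/I$ where $I$ is the annihilator ideal of $v$; equivalently $I = \{a \in R_s : \partial(a)v \equiv 0\}$. The next step is to identify $I$. Since $R_s = k_s[\theta]$ is a principal ideal domain and $P$ is prime, $I$ is generated by a power $P^m$ (the annihilator must be $P$-primary because $(R_s/PR_s)^n$ is killed by $P^n$ and $P$ is the only relevant prime). By Lemma~\ref{lem-partial0modp}, $\partial(P^m) \equiv 0 \pmod P$ precisely when $m \geq n$, so $P^n$ annihilates $v$ while $P^{n-1}$ does not; therefore $I = P^n R_s$ and $\lie(E_\alpha)(R_s/PR_s) \simeq R_s/P^n R_s$.

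Finally I would reconcile this with a dimension count to confirm the generator claim is tight: both $R_s/P^nR_s$ and $(R_s/PR_s)^n$ have $k_s$-dimension $n \deg P$, so a cyclic submodule isomorphic to $R_s/P^nR_s$ inside the $n\deg P$-dimensional space $(R_s/PR_s)^n$ must be the whole space, which simultaneously confirms both that $v$ generates and that the annihilator is exactly $P^n$. The first Fitting ideal of a cyclic module $R_s/P^nR_s$ is then $P^n R_s$, giving the monic generator $P^n$.

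The main obstacle will be the spanning argument showing $v$ generates: I expect this to rest essentially on Lemma~\ref{lem-partial0modp} together with the nondegeneracy of the hyperdifferential operators modulo $P$, namely that $D_1(P), \dots, D_{n-1}(P)$ interact with powers of $P$ so as to produce, as $a$ varies, vectors spanning each successive coordinate. Rather than exhibit an explicit spanning family, the cleanest route is the dimension count sketched above: produce the cyclic module $R_s v \simeq R_s/P^nR_s$ first (which needs only that $P^n$ is the annihilator, from Lemma~\ref{lem-partial0modp}), then invoke $\dim_{k_s} R_s v = n \deg P = \dim_{k_s}(R_s/PR_s)^n$ to force $R_s v = \lie(E_\alpha)(R_s/PR_s)$.
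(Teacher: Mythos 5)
Your proposal is correct and takes essentially the same route as the paper: both arguments rest on Lemma~\ref{lem-partial0modp} to show that the annihilator of the residue class $v$ of ${}^t(0,\dots,0,1)$ is exactly $P^nR_s$, combined with the $k_s$-dimension count $\dim_{k_s}\lie(E_\alpha)(R_s/PR_s) = n\deg P$. The only cosmetic difference is that the paper packages the conclusion through the structure theorem over the PID $R_s$ (elementary divisors $e_1 \leq \cdots \leq e_m$ with $e_1 + \cdots + e_m = n$ and $e_m \geq n$, forcing $m = 1$), whereas you argue directly that the cyclic submodule $R_s v \simeq R_s/P^nR_s$ already has full dimension; the two formulations are interchangeable.
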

\begin{proof}
 By definition, $\lie(E_\alpha)(R_s/PR_s)$ is the $k_s$-vector space $(R_s/PR_s)^n$ equipped with the $R_s$-module structure given by $\partial$.
 This $R_s$-module is finitely generated and, since $\partial(P^{q^n}) = P^{q^n} I_n$ by lemma \ref{lem-fang}, the polynomial $P^{q^n}$ annihilates it.
 Since $R_s$ is principal, by the structure theorem, there exists integers $e_1 \leq \cdots \leq e_m$ such that 
 \[ \lie(E_\alpha)(R_s/PR_s) \simeq \frac{R_s}{P^{e_1}R_s} \times \cdots \times \frac{R_s}{P^{e_m}R_s}. \]
 Since $\lie(E_\alpha)(R_s/PR_s)$ is a $k_s$-vector space of dimension $n \deg P$, we have $e_1 + \cdots + e_m = n$.
 But, by the previous lemma, the residue class of ${}^t(0,\dots,0,1)$ is not annihilated by $P^{n-1}$, hence $e_m \geq n$.
 Thus, $\lie(E_\alpha)(R_s/PR_s)$ is cyclic and generated by the residue class of this vector. 
\end{proof}

\subsubsection{Fitting ideal of \texorpdfstring{$E_\alpha(R_s/PR_s)$}{E}} \label{sec-fittingE}

Let $P$ be a prime of $A$ and denote its degree by $d$.
We consider $R := R_s/PR_s$ and $E_\alpha(R)$ the $R_s$-module $R^n$ where the action of $R_s$ is given by $\phi$, as defined at the beginning of section \ref{sec-nthcarlitz}.

For $i = 1,\dots,n$, we denote by $e_i \colon \csinf^n \rightarrow \csinf$ the projection on the $i^{th}$ coordinate.
By analogy with \cite{AndTha90}, we define the $R_s$-module
\[ W_n(R) := \left\{ w \in R((t^{-1}))/R[t] \mid \alpha \tau(w) = (t - \theta)^n w \mod R[t] \right\}, \]
where $\tau(w) = \sum \tau(r_i) t^i$ if $w = \sum r_i t^i \in R((t^{-1}))$.

\begin{prop} \label{prop-isoERWR}
 The map 
 \[ \begin{array}{cccc} 
    \psi \colon & E_\alpha(R) & \longrightarrow & R((t^{-1}))/R[t] \\
    & c & \longmapsto & \displaystyle -\sum\limits_{i = 1}^\infty e_1 \phi_{\theta^{i - 1}}(c) t^{-i} \\
   \end{array} \]
 induces an isomorphism of $R_s$-modules between $E_\alpha(R)$ and $W_n(R)$.
\end{prop}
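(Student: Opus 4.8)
The plan is to verify in turn that $\psi$ is a well-defined $R_s$-linear map, that its image lands in $W_n(R)$, that it is injective, and finally to conclude by a dimension count. Throughout I regard $R((t^{-1}))/R[t]$ as an $R_s$-module on which $\theta$ acts by multiplication by $t$; this makes $W_n(R)$ a submodule, since $\tau$ fixes $t$ and hence commutes with multiplication by $t$.

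First I would record the basic linearity. For $c \in R^n$ one has $\phi_{\theta^{i-1}}\phi_\theta = \phi_{\theta^i}$, so a direct shift of the defining series gives $\psi(\phi_\theta c) = t\,\psi(c)$ in $R((t^{-1}))/R[t]$, the stray constant term produced by the shift lying in $R[t]$. Since $\phi$ is a morphism of $k_s$-algebras, $\phi_\lambda = \lambda I_n$ for $\lambda \in k_s$, so $\psi$ is $k_s$-linear; together these show that $\psi$ is $R_s$-linear.

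The heart of the argument is to show $\psi(c) \in W_n(R)$. I would introduce the vector-valued generating function $\Psi(c) := -\sum_{i\geq 1}\phi_{\theta^{i-1}}(c)\,t^{-i}$ and set $w_k := e_k(\Psi(c))$, so that $w_1 = \psi(c)$. The key point is the behaviour of the row $e_k$ under $\phi_\theta - \theta I_n = N + N_\alpha\tau$, where $N = \partial_\theta - \theta I_n$ is the nilpotent superdiagonal shift: for $k < n$ one has $e_k(\phi_\theta - \theta I_n) = e_{k+1}$, while $e_n(\phi_\theta - \theta I_n) = \alpha\,\tau\circ e_1$. Feeding this into the same shift computation as above yields, modulo $R[t]$, the relations $(t-\theta)w_k \equiv w_{k+1}$ for $1 \le k \le n-1$ and $(t-\theta)w_n \equiv \alpha\,\tau(w_1)$. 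Iterating the first relation (legitimate because $R[t]$ is stable under multiplication by $t-\theta$) gives $(t-\theta)^{n-1}w_1 \equiv w_n$, and combining with the last relation gives $(t-\theta)^n w_1 \equiv \alpha\,\tau(w_1)$, i.e. $\psi(c) \in W_n(R)$. I expect this step, in particular the careful bookkeeping of the reductions modulo $R[t]$ while iterating $(t-\theta)$, to be the main technical obstacle.

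Injectivity is then immediate: if $\psi(c) = w_1 = 0$, the relations $(t-\theta)w_k \equiv w_{k+1}$ force $w_k = 0$ for all $k$, and reading off the coefficient of $t^{-1}$ in $w_k = -\sum_{j\geq 1}e_k\phi_{\theta^{j-1}}(c)t^{-j}$ gives $e_k(c) = 0$ for each $k$, hence $c = 0$. For surjectivity I would argue by dimensions. Writing $w = \sum_{i\geq 1}a_i t^{-i}$ with $a_i \in R$, the relation $(t-\theta)^n w = \alpha\tau(w)$ in $R((t^{-1}))/R[t]$ is equivalent, after comparing coefficients of each $t^{-j}$ with $j\geq 1$, to a recursion of the shape $a_{n+j} = \alpha\tau(a_j) - \sum_{k=1}^n \binom{n}{k}(-\theta)^k a_{n-k+j}$ expressing $a_{n+j}$ in terms of $a_j,\dots,a_{n+j-1}$; thus $w$ is uniquely determined by the free data $(a_1,\dots,a_n) \in R^n$, so $\dim_{k_s}W_n(R) = n\deg P = \dim_{k_s}E_\alpha(R)$. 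An injective $k_s$-linear map between $k_s$-vector spaces of equal finite dimension is bijective, so $\psi$ is the desired isomorphism of $R_s$-modules.
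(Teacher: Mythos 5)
Your proof is correct, but it is genuinely different in character from the paper's: the paper gives no argument at all, simply citing Proposition 1.5.1 of Anderson--Thakur \cite{AndTha90}, where the analogous isomorphism is established in the untwisted Carlitz setting, leaving the adaptation to the twisted module $E_\alpha$ over $R = R_s/PR_s$ implicit (the paper even introduces $W_n(R)$ only ``by analogy with'' \cite{AndTha90}). You instead verify the statement directly, and all the steps check out: the shift computation $\psi(\phi_\theta c) \equiv t\,\psi(c)$ with the stray constant term absorbed into $R[t]$; the row identities $e_k(\phi_\theta - \theta I_n) = e_{k+1}$ for $k < n$ and $e_n(\phi_\theta - \theta I_n) = \alpha\,\tau \circ e_1$, which correctly encode $\phi_\theta = \partial_\theta + N_\alpha\tau$; the legitimacy of iterating $(t-\theta)$ through congruences modulo $R[t]$, since $(t-\theta)R[t] \subseteq R[t]$; injectivity via the unique representatives in $t^{-1}R[[t^{-1}]]$ (so $w_k \in R[t]$ forces $w_k = 0$, and the $t^{-1}$-coefficient recovers $e_k(c)$); and the coefficient recursion $a_{n+j} = \alpha\tau(a_j) - \sum_{k=1}^n \binom{n}{k}(-\theta)^k a_{n-k+j}$, which is exactly what comparing principal parts of $\alpha\tau(w) = (t-\theta)^n w$ yields. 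What your route buys, beyond self-containedness, is twofold: it proves the twisted statement actually used in the paper rather than its untwisted ancestor, and your recursion gives an independent proof that $\dim_{k_s} W_n(R) = n\deg P$ --- a fact the paper asserts without justification immediately after the proposition (``since it is a $k_s$-vector space of dimension $nd$\dots''). What the citation buys is brevity and a pointer to the conceptual origin of $\psi$ in Anderson--Thakur's theory; your argument, by contrast, makes the isomorphism a transparent generating-function computation.
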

\begin{proof}
 See proposition 1.5.1 of \cite{AndTha90}. 
\end{proof}

Observe that for any $c \in E_\alpha(R)$, we have $\psi(\phi_\theta(c)) = t \phi_\theta(c) \mod R[t]$.
Moreover, since it is a $k_s$-vector space of dimension $nd$, $W_n(R)$ is a finitely generated and torsion $k_s[t]$-module.

For $w \in W_n(R)$, applying $d-1$ times $\alpha \tau$ to the relation $\alpha \tau(w) = (t - \theta)^n w$, we get
\[ \alpha\tau(\alpha) \cdots \tau^{d - 1}(\alpha) \tau^d(w) = \prod\limits_{i = 0}^{d - 1} \left(t - \theta^{q^i}\right)^n w. \]
But $\tau^d(w) = w$ in $W_n(R)$ and $\prod\limits_{i = 0}^{d - 1} (t - \theta^{q^i}) = P(t) \mod R[t]$ where $P(t)$ denotes the polynomial in $t$ obtained substituting $t$ form $\theta$ in $P$.
Thus we obtain
\begin{equation} \label{eqn-polyzerownr}
P^n(t) - \alpha\tau(\alpha) \cdots \tau^{d - 1}(\alpha) = 0 \ \text{in} \ W_n(R).
\end{equation}
Since we have the isomorphism
\[ \frac{R_s}{PR_s} \simeq \frac{A}{PA}\otimes_{\fq} k_s, \]
for any $x \in R_s$, there exists a unique $y \in k_s$ such that $x \tau(x) \cdots \tau^{d - 1}(x) = y \mod PR_s$.
We denote by $\rho_\alpha(P)$ the element of $k_s$ such that $\rho_\alpha(P) = \alpha \tau(\alpha) \cdots \tau^{d - 1}(\alpha) \mod PR_s$.
Note that, since $P$ is prime, $\rho_\alpha(P) = 0 \mod P$ if and only if $P$ divides $\alpha$ in $R_s$.
Then, by \eqref{eqn-polyzerownr}, we deduce that $W_n(R)$ is annihilated by $P^n(t) - \rho_\alpha(P)$, or equivalently 
\begin{equation} \label{eqn-ERinker}
 E_\alpha(R) \subseteq \ker\phi_{P^n - \rho_\alpha(P)} = \left\{ x \in R^n \mid \phi_{P^n - \rho_\alpha(P)}(x) = 0 \right\}.
\end{equation}

\begin{lem} \label{lem-dimatorsionWR}
 For any $a \in k_s[t]$ prime to $P(t) := P_{\mid_{\theta = t}}$, the $k_s$-vector space $W_n(R)[a]$ of $a$-torsion points of $W_n(R)$ is of dimension at most $\deg_t a$.
\end{lem}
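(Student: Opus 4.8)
The plan is to make the torsion condition completely explicit and then recognise it as the space of fixed vectors of a semilinear operator, to which a classical Artin-style independence argument applies. Write $m := \deg_t a$ and recall that the $k_s[t]$-module structure on $W_n(R) \subseteq R((t^{-1}))/R[t]$ is the one for which $t$ acts by multiplication. An element $w$ is $a$-torsion precisely when $a(t)\,w \in R[t]$; choosing the representative of $w$ in $t^{-1}R[[t^{-1}]]$, the product $a(t)\,w$ is then a polynomial $g \in R[t]$ with $\deg_t g < m$, and $w \mapsto g$ is a $k_s$-linear bijection from the $a$-torsion of $R((t^{-1}))/R[t]$ onto $\{g \in R[t] : \deg_t g < m\}$. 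Since $a \in k_s[t]$ and $\tau$ fixes both $k_s$ and $t$, we have $\tau(a)=a$ and $\tau(w)=\tau(g)/a$, so the defining relation $\alpha\tau(w)=(t-\theta)^n w$ of $W_n(R)$ becomes the divisibility $a \mid \alpha\tau(g)-(t-\theta)^n g$ in $R[t]$. Thus, setting $\mathcal{R} := R[t]/(aR[t])$, I would identify $W_n(R)[a]$ with $\ker\big(\,\bar g \mapsto \alpha\tau(\bar g)-(t-\theta)^n\bar g\,\big)$ on $\mathcal{R}$.

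Next I would record the structural facts that make $\mathcal{R}$ behave well. Because $\fq$ is algebraically closed in $k_s=\fq(t_1,\dots,t_s)$, the prime $P$ remains irreducible over $k_s$, so $R=R_s/PR_s=k_s[\theta]/(P(\theta))$ is a \emph{field}, a cyclic extension of $k_s$ of degree $d=\deg P$ whose Galois group is generated by $\tau|_R$ and whose fixed field is $R^{\tau}=k_s$ (this is the incarnation of Lemma \ref{lem-csinftau1} in $R$). Over $R$ one has the factorisation $P(t)=\prod_{i=0}^{d-1}(t-\theta^{q^i})$, so $(t-\theta)\mid P(t)$; since $a$ is prime to $P(t)$ in $k_s[t]$, hence in $R[t]$, the element $t-\theta$ is a unit in $\mathcal{R}$. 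As $\tau$ fixes $a$, it descends to a $k_s$-algebra automorphism of $\mathcal{R}$, and I may rewrite the torsion condition $\alpha\tau(\bar g)=(t-\theta)^n\bar g$ as $\Sigma(\bar g)=\bar g$, where
\[ \Sigma := \alpha(t-\theta)^{-n}\tau \colon \mathcal{R}\longrightarrow \mathcal{R} \]
is additive and $\tau|_R$-semilinear over $R$. Thus $W_n(R)[a]=\{x\in\mathcal{R}:\Sigma(x)=x\}$, and $\mathcal{R}$ is an $R$-vector space of dimension $m$.

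Finally I would invoke the semilinear independence principle: if $x_1,\dots,x_r\in\mathcal{R}$ are fixed by $\Sigma$ and $k_s$-linearly independent, then they are $R$-linearly independent. Indeed, a shortest nontrivial $R$-relation $\sum_i c_i x_i=0$ can be normalised to $c_1=1$ (here one uses that $R$ is a field), and applying $\Sigma$ gives $\sum_i \tau(c_i)x_i=0$; subtracting yields a strictly shorter relation $\sum_i (c_i-\tau(c_i))x_i=0$, whence by minimality all $c_i-\tau(c_i)=0$, i.e. $c_i\in R^{\tau}=k_s$, contradicting $k_s$-independence. Consequently $\dim_{k_s}\{x:\Sigma(x)=x\}\le \dim_R \mathcal{R}=m=\deg_t a$, which is the claim. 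The essential inputs — and the points I expect to require the most care — are precisely that $R$ is a field (so the normalisation $c_1=1$ is legitimate) and that $t-\theta$ is invertible modulo $a$ (which is exactly where the hypothesis that $a$ is prime to $P(t)$ enters); the rest is the bookkeeping of the reduction to $\mathcal{R}$ and the standard minimal-relation argument.
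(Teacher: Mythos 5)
Your proof is correct, and its core coincides with the paper's: both identify the $a$-torsion of $R((t^{-1}))/R[t]$ with an $R$-vector space of dimension $\deg_t a$ (your polynomial $g = aw$ with $\deg_t g < \deg_t a$ is a coordinate-free version of the paper's coefficient vector $\Lambda$ in the basis $t^i/a$ of $\frac{1}{a}R[t]/R[t]$), rewrite the defining relation of $W_n(R)$ as a $\tau$-semilinear condition there, and conclude from the principle that solutions which are $k_s$-linearly independent are $R$-linearly independent. The differences lie in the normalization and in what is proved versus cited, and they do buy something. The paper puts the condition in the form $\tau(\Lambda)=B\Lambda$ with $B=\alpha^{-1}\cdot\bigl(\text{matrix of multiplication by } (t-\theta)^n \text{ on } R[t]/(a)\bigr)$ and invokes \cite[lemma 1.7]{vdPS}; this silently divides by $\alpha$ (so assumes $P\nmid\alpha$ --- harmless in context, since proposition \ref{prop-fittingE} treats the case $P\mid\alpha$ separately before invoking the lemma), and the cited lemma requires $B$ to be invertible, which is exactly where the hypothesis that $a$ is prime to $P(t)$ enters --- a point the paper never makes explicit. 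Your operator $\Sigma=\alpha(t-\theta)^{-n}\tau$ inverts $(t-\theta)$ modulo $a$ instead, so the coprimality hypothesis is used exactly once and visibly, the case $P\mid\alpha$ is covered for free (then $\Sigma=0$ and the fixed space is trivial), and your fixed-point version of the Dedekind--Artin minimal-relation argument needs no invertibility of $\Sigma$ at all, since you apply $\Sigma$ only to vectors it fixes. You also justify two facts the paper uses tacitly: that $P$ stays irreducible over $k_s$, so $R$ is a field (which is what legitimizes the normalization $c_1=1$), and that $R^\tau=k_s$. In short: same route, but self-contained, and with every hypothesis accounted for.
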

\begin{proof}
 By definition, we have 
 \[ W_n(R)[a] = \left\{ w \in \frac{1}{a}R[t]/R[t] \mid \alpha \tau(w) = (t-\theta)^n w \mod R[t] \right\} \subseteq R((t^{-1}))/R[t]. \]
 Let $w \in W_n(R)[a]$.
 Since the $t^i/a$ for $i \in \{0,\dots,\deg a-1\}$ form an $R$-basis of $\frac{1}{a}R[t]/R[t]$, we can write
 \[ w = \sum\limits_{i = 0}^{\deg a - 1} \lambda_i \frac{t^i}{a}, \]
 where the $\lambda_i$ are in $R$.
 Using the binomial formula and writing $t^j/a$ for $j \geq \deg a$ in the above basis, the functional equation verified by $w$ becomes
 \[ \sum\limits_{i = 0}^{\deg a - 1} \alpha \tau(\lambda_i) \frac{t^i}{a} = \sum\limits_{i = 0}^{\deg a - 1} \sum\limits_j b_{i,j} \lambda_j \frac{t^i}{a}, \]
 where the $b_{i,j}$ are in $R$.
 Identifying the two sides, we obtain $\tau(\Lambda) = B \Lambda$ where $\Lambda$ is the vector ${}^t(\lambda_0,\dots,\lambda_{\deg a - 1})$ and $B$ is the matrix of $M_{\deg a}(R)$ with coefficients $b_{i,j}/\alpha$.
 
 But the $k_s$-vector space $V := \left\{ X \in R^{\deg a} \mid \tau(X) = BX \right\}$ is of dimension at most $\deg a$.
 Indeed, observe that, if $v_1,\dots,v_m$ are vectors of $R^{\deg a}$ such that $\tau(v_i) = B v_i$ for all~$i \in \left\{ 1,\dots,m \right\}$, linearly independent over $R$, there are also linearly independent over $R^\tau = k_s$ (by induction on $m$, see \cite[lemma 1.7]{vdPS}).  
\end{proof}

%
%

\begin{prop} \label{prop-fittingE}
 Let $P$ be a prime of $A$.
 We have the isomorphism of $R_s$-modules
 \[ E_\alpha(R) \simeq \frac{R_s}{(P^n - \rho_\alpha(P))R_s}. \]
\end{prop}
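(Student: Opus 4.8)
The plan is to combine the isomorphism $E_\alpha(R) \simeq W_n(R)$ of proposition \ref{prop-isoERWR} with the torsion bound of lemma \ref{lem-dimatorsionWR}, and to conclude via the structure theorem for finitely generated modules over the principal ideal domain $R_s = k_s[\theta]$. Via $\psi$ the action of $\theta$ on $W_n(R)$ is multiplication by $t$, so I may identify the $R_s = k_s[\theta]$-module structure of $W_n(R)$ with its $k_s[t]$-module structure ($\theta \leftrightarrow t$); in particular $\pi(\theta)$-torsion corresponds to $\pi(t)$-torsion. Set $g := P^n - \rho_\alpha(P) \in R_s$: it is monic of degree $nd$ (where $d = \deg P$), and by \eqref{eqn-ERinker} it annihilates $E_\alpha(R) \simeq W_n(R)$. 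Since $\dim_{k_s} E_\alpha(R) = nd = \deg g$, it suffices to prove that $E_\alpha(R)$ is a \emph{cyclic} $R_s$-module: a cyclic torsion module annihilated by $g$ and of $k_s$-dimension $\deg g$ must be $R_s/(g)R_s$, since its unique nontrivial invariant factor divides $g$ yet has the same degree.

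I would treat two cases according to whether $P$ divides $\alpha$. First suppose $P \nmid \alpha$, equivalently $\rho_\alpha(P) \neq 0$. Then $g \equiv -\rho_\alpha(P) \not\equiv 0 \pmod{P}$, so $P \nmid g$; hence every irreducible factor $\pi$ of $g$ is distinct from $P$, and thus coprime to $P(t)$. By the structure theorem, $W_n(R) \simeq \bigoplus_i R_s/f_i R_s$ with $f_1 \mid \cdots \mid f_m$, and for each prime $\pi$ the number of invariant factors divisible by $\pi$ equals $\dim_{R_s/\pi} W_n(R)[\pi]$, which satisfies $\dim_{k_s} W_n(R)[\pi] = (\deg \pi)\,\dim_{R_s/\pi} W_n(R)[\pi]$. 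Lemma \ref{lem-dimatorsionWR}, applicable since $\pi$ is coprime to $P(t)$, gives $\dim_{k_s} W_n(R)[\pi] \leq \deg \pi$, so at most one invariant factor is divisible by $\pi$. Were some $f_j$ with $j < m$ a non-unit, it would have a prime factor $\pi$, which (as $f_j \mid f_m$) would also divide $f_m$, forcing two invariant factors divisible by $\pi$ --- a contradiction. Hence only $f_m$ is nontrivial, $W_n(R)$ is cyclic, and this case follows.

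Second, suppose $P \mid \alpha$, so that $\rho_\alpha(P) = 0$ and $g = P^n$. Here $N_\alpha \equiv 0 \pmod{P}$, so $\phi_\theta \equiv \partial_\theta$ on $R^n$ and the $R_s$-modules $E_\alpha(R)$ and $\lie(E_\alpha)(R_s/PR_s)$ coincide. Proposition \ref{prop-fittinglie} then yields $E_\alpha(R) \simeq R_s/P^n R_s = R_s/(P^n - \rho_\alpha(P))R_s$, as desired.

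The main obstacle is the cyclicity in the first case, and the reason the argument must split is precisely the coprimality hypothesis of lemma \ref{lem-dimatorsionWR}: it fails for the prime $P$ itself, which is exactly the prime that can divide $g$, namely when $\rho_\alpha(P)$ vanishes. Isolating the case $P \mid \alpha$ and reducing it to the already-computed tangent module sidesteps this gap.
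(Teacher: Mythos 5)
Your proof is correct and takes essentially the same route as the paper: the same case split on whether $P$ divides $\alpha$, with the case $P \mid \alpha$ reduced to Proposition~\ref{prop-fittinglie}, and the coprime case handled by combining Lemma~\ref{lem-dimatorsionWR} (to get cyclicity) with the annihilation \eqref{eqn-ERinker} and the degree count $\dim_{k_s} E_\alpha(R) = nd = \deg\left(P^n - \rho_\alpha(P)\right)$. The only difference is that you make explicit the invariant-factor argument and the key observation that $P \nmid P^n - \rho_\alpha(P)$ (so every prime dividing an invariant factor is coprime to $P(t)$ and the lemma applies), details which the paper compresses into the single sentence ``we deduce from lemma~\ref{lem-dimatorsionWR} that $E_\alpha(R)$ is a cyclic $R_s$-module.''
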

\begin{proof}
 Observe that if $P$ divides $\alpha$, we have $\rho_\alpha(P) = 0$ and the isomorphism of $R_s$-modules $\lie(E_\alpha)(R) \simeq E_\alpha(R)$.
 Then, the result is the same as in proposition \ref{prop-fittinglie}.
 
 Hence, let us suppose that $\alpha$ and $P$ are coprime.
 The $k_s$-vector space $E_\alpha(R)$ is of dimension~$nd$.
 We deduce from lemma \ref{lem-dimatorsionWR} that $E_\alpha(R)$ is a cyclic $R_s$-module, \ie
 \[ E_\alpha(R) \simeq \frac{R_s}{fR_s}, \]
 for some monic element $f$ of $R_s$ of degree $nd$.
 On the other hand, by the inclusion \eqref{eqn-ERinker}, $E_\alpha(R)$ is annihilated by $P^n - \rho_\alpha(P)$ thus $f$ divides $P^n - \rho_\alpha(P)$.
 Since these two polynomials are monic and have the same degree, they are equal. 
\end{proof}

\subsubsection{\texorpdfstring{$L$}{L}-values} \label{sec-cfPellarinL}

Let $a$ be a monic polynomial of $A$ and $a = P_1^{e_1} \cdots P_r^{e_r}$ be its decomposition into a product of primes.
Then, we define 
\[ \rho_\alpha(a) := \prod\limits_{i=1}^r \rho_\alpha(P_i)^{e_i}. \]
By propositions \ref{prop-fittinglie} and \ref{prop-fittingE}, we get 
\[ L(E_\alpha/R_s) = \prod\limits_{\substack{P \in A \\ \text{prime}}} \frac{[\lie(E_\alpha)(R_s/PR_s)]_{R_s}}{[E_\alpha(R_s/PR_s)]_{R_s}}
 = \prod\limits_{\substack{P\in A \\ \text{prime}}} \frac{P^n}{P^n-\rho_\alpha(P)} 
 = \sum\limits_{a\in A_+} \frac{\rho_\alpha(a)}{a^n} \in \ksinf. \]
 
As in \cite[section 4.1]{APTR14}, observe that for any prime $P$ of $A$, $\rho_\alpha(P)$ is the resultant of $P$ and $\alpha$ seen as polynomials in $\theta$.
In particular, if $\alpha = (t_1-\theta) \cdots (t_s-\theta)$, we obtain $\rho_\alpha(P) = P(t_1) \cdots P(t_s)$.
Thus, by theorem \ref{thm-cnf}, we get a class formula for $L$-values introduced in \cite{Pel12}: 
\[ L(\chi_{t_1} \cdots \chi_{t_s},n) = \sum\limits_{a \in A_+} \frac{\chi_{t_1}(a) \cdots \chi_{t_s}(a)}{a^n} 
= [\lie(E_\alpha)(R_s):\expei(E_\alpha(R_s))]_{R_s} [H(E_\alpha/R_s)]_{R_s}, \]
where $\chi_{t_i}\colon A \rightarrow \fq[t_1,\dots,t_s]$ are the ring homomorphisms defined respectively by $\chi_{t_i}(\theta) = t_i$.

\subsection{Goss abelian \texorpdfstring{$L$}{L}-series} \label{sec-gossL}

This section is inspired by \cite{AngTae12}.

Let $a \in A_+$ be squarefree and $L$ be the cyclotomic field associated with $a$, \ie the finite extension of $K$ generated by the $a$-torsion of the Carlitz module.
We denote by $\Delta_a$ the Galois group of this extension, it is isomorphic to $(A/aA)^\times$.

Note that $A[\Delta_a] = \prod\limits_i F_i[\theta]$ for some finite extensions $F_i$ of $\fq$.
In particular, $A[\Delta_a]$ is a principal ideal domain and Fitting ideals are defined as usual.
If $M$ is a finite $A[\Delta_a]$-module, we denote by $[M]_{A[\Delta_a]}$ the unique generator $f$ of $\fitt_{A[\Delta_a]} M$ such that each component $f_i \in F_i[\theta]$ of $f$ is monic.

We denote by $\widehat\Delta_a$ the group of characters of $\Delta_a$, \ie $\widehat\Delta_a = \hom(\Delta_a,\overline{\fq}^\times)$.
For $\chi \in \widehat\Delta_a$, we denote by $\fq(\chi)$ the finite extension of $\fq$ generated by the values of $\chi$ and we set
\[ e_\chi := \frac{1}{\#\Delta_a}\sum\limits_{\sigma \in \Delta_a} \chi^{-1}(\sigma) \sigma \in \fq(\chi)[\Delta_a]. \]
Then $e_\chi$ is idempotent and $\sigma e_\chi = \chi(\sigma) e_\chi$ for every $\sigma \in \Delta_a$.

Let $F$ be the finite extension of $\fq$ generated by the values of all characters, \ie $F$ is the compositum of all $\fq(\chi)$ for $\chi \in \widehat\Delta_a$.
If $M$ is an $A[\Delta_a]$-module, we have the decomposition into $\chi$-components
\[ F \otimes_{\fq} M = \bigoplus\limits_{\chi \in \widehat\Delta_a} e_\chi \left(F \otimes_{\fq} M\right). \]

Let $V$ be a free $\kinf[\Delta_a]$-module of rank $n$.
A sub-$A[\Delta_a]$-module $M$ of $V$ is a \emph{lattice} of $V$ if $M$ is free of rank one and $\kinf[\Delta_a] \cdot M = V$.
Let $M$ be a lattice of $V$ and $\chi \in \widehat\Delta_a$.
Then $M(\chi) := e_\chi \left( \fq(\chi) \otimes_{\fq} M \right)$ is a free $A(\chi)$-module of rank $n$, discrete in $V(\chi) := e_\chi \left( \fq(\chi) \otimes_{\fq} V \right)$, where $A(\chi) := \fq(\chi) \otimes_{\fq} A$.
Now let $M_1$ and $M_2$ be two lattices of $V$.
For each $\chi \in \widehat\Delta_a$, there exists $\sigma_\chi \in \gl(V(\chi))$ such that $\sigma_\chi(M_1(\chi)) = M_2(\chi)$.
Then, we define $[M_1(\chi) : M_2(\chi)]_{A(\chi)}$ to be the unique monic representative of $\det \sigma_\chi$ in  $\kinf(\chi) := \fq(\chi) \otimes_{\fq} \kinf$.
Finally, we set
\[ [M_1 : M_2]_{A[\Delta_a]} := \sum\limits_{\chi \in \widehat\Delta_a} [M_1(\chi) : M_2(\chi)]_{A(\chi)} e_\chi \in \kinf[\Delta_a]^\times. \]

\subsubsection{Gauss-Thakur sums} \label{sec-GTsums}

We review some  basic facts on Gauss-Thakur sums, introduced in \cite{Tha88} and generalized in \cite{AngPel14}.

We begin with the case of only one prime.
Let $P$ be a prime of $A$ of degree $d$ and $\zeta_P \in \overline{\fq}$ such that $P(\zeta_P) = 0$.
We denote by $\Lambda_P$ the $P$-torsion of the Carlitz module and let $\lambda_P$ be a non zero element of $\Lambda_P$.
We consider the cyclotomic extension $K_P := K(\Lambda_P) = K(\lambda_P)$ and we denote its Galois group by $\Delta_P$.
We have $\Delta_P \simeq (A/PA)^\times$.
More precisely, if $b \in (A/PA)^\times$, the corresponding element $\sigma_b \in \Delta_P$ is uniquely determined by $\sigma_b(\lambda_P) = C_b(\lambda_P)$.
We denote by $\mathcal O_{K_P}$ the integral closure of $A$ in $K_P$.
We have $\mathcal O_{K_P} = A[\lambda_P]$.

We define the \emph{Teichm\"uller character} 
\[ \omega_P \colon \begin{array}{ccc} \Delta_P & \longrightarrow & \mathbb F_{q^d}^* \\
                    \sigma_b & \longmapsto & b(\zeta_P)
                   \end{array}, \]
where $\sigma_b$ is the unique element of $\Delta_P$ such that $\sigma_b(\lambda_P) = C_b(\lambda_P)$.
Let $\chi \in \widehat\Delta_P$.
Since the Teichm\"uller character generates $\widehat\Delta_P$, there exists $j \in \{0, \dots, q^d - 2\}$ such that $\chi = \omega_P^j$.
We expand $j = j_0 + j_1 q + \cdots + j_{d - 1} q^{d - 1}$ in base $q$ ($j_0,\dots,j_{d - 1} \in \{0,\dots,q - 1\}$).
Then, the \emph{Gauss-Thakur sum} (see \cite{Tha88}) associated with $\chi$ is defined as
\[ g(\chi) := \prod\limits_{i = 0}^{d - 1} \left( -\sum\limits_{\delta \in \Delta_P} \omega_P^{-q^i}(\delta) \delta(\lambda_P) \right)^{j_i} \in \fq(\chi) \otimes_{\fq} \mathcal O_{K_P}. \]
We compute the action of $\tau = 1 \otimes \tau$ on these Gauss-Thakur sums (see \cite[proof of Theorem~II]{Tha88}).
Let $1 \leq j \leq d - 1$.
Since by the Carlitz action $\sigma_\theta \sigma_b(\lambda_P) = \theta \sigma_b(\lambda_P) + \tau\left( \sigma_b(\lambda_P) \right)$, we have
\[ \tau\left( g(\omega_P^{q^j}) \right) = -\sum\limits_{\sigma_b \in \Delta_P} \omega_P^{q^j}(\sigma_b) \left( \sigma_b \sigma_\theta(\lambda_P) - \theta \sigma_b(\lambda_P) \right) \]
Then, by substitution, we get 
\begin{equation} \label{eqn-taugqj}
 \tau\left( g(\omega_P^{q^j}) \right) = \left( \zeta_P^{q^j} - \theta \right) g(\omega_P^{q^j}).
\end{equation}

Now, we return to the general case.
Since $a$ is squarefree, we can write $a = P_1 \cdots P_r$ with $P_1,\dots,P_r$ distinct primes of respective degrees $d_1,\dots,d_r$.
Since $\widehat\Delta_a \simeq \widehat\Delta_{P_1} \times \cdots \times \widehat\Delta_{P_r}$, for every character $\chi \in \widehat\Delta_a$, we have
\begin{equation} \label{eqn-decompochi}
 \chi = \omega_{P_1}^{N_1} \cdots \omega_{P_r}^{N_r},
\end{equation}
for some integers $0 \leq N_i \leq q^{d_i} - 2$ and where $\omega_{P_i}$ is the Teichm\"uller character associated with~$P_i$.
The product $f_\chi := \prod\limits_{N_i \neq 0} P_i$ is the \emph{conductor} of $\chi$.
Then, the Gauss-Thakur sum (see \cite[section~2.3]{AngPel14}) associated with $\chi$ is defined as
\[ g(\chi) := \prod\limits_{i = 1}^r g(\omega_{P_i}^{N_i}) \in F \otimes_{\fq} \ol, \]
or equivalently
\[ g(\chi) = \prod\limits_{i = 1}^r \prod\limits_{j = 0}^{d_i - 1} g(\omega_{P_i}^{q^j})^{N_{i,j}}, \]
where the $N_{i,j}$ are the $q$-adic digits of $N_i$.
By equality \eqref{eqn-taugqj}, we obtain
\begin{equation} \label{eqn-taugchi}
 \tau\left( g(\chi) \right) = \underbrace{\prod\limits_{i = 1}^r \prod\limits_{j = 0}^{d_i - 1} \left( \zeta_{P_i}^{q^j} - \theta \right)^{N_{i,j}}}_{\alpha(\chi)} g(\chi).
\end{equation}

\begin{lem} \label{lem-eta}
 The ring $\ol$ is a free $A[\Delta_a]$-module of rank one generated by $\eta_a := \sum\limits_{\chi \in \widehat\Delta_a} g(\chi)$.
\end{lem}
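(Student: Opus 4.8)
The plan is to show that the $A[\Delta_a]$-linear map $\mu\colon A[\Delta_a]\to\ol$, $x\mapsto x\,\eta_a$, is an isomorphism. First I would check that $\eta_a$ actually lies in $\ol$: the Frobenius of $\gal(F/\fq)$ acts on $F\otimes_{\fq}\ol$ through the first factor and, by the definition of the Gauss-Thakur sums, sends $g(\chi)$ to $g(\chi^q)$ (where $\chi^q\colon\sigma\mapsto\chi(\sigma)^q$); since $\chi\mapsto\chi^q$ permutes $\widehat\Delta_a$, the sum $\eta_a=\sum_\chi g(\chi)$ is $\gal(F/\fq)$-invariant and hence belongs to $(F\otimes_{\fq}\ol)^{\gal(F/\fq)}=\ol$. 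As $\ol$ is free of rank $[L:K]=\#\Delta_a$ over the principal domain $A$ and $A[\Delta_a]$ is free of the same rank over $A$, the map $\mu$ is an isomorphism as soon as it becomes one after the faithfully flat base change $-\otimes_{\fq}F$.

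After this base change the group algebra splits completely: $F\otimes_{\fq}A[\Delta_a]=\prod_{\chi\in\widehat\Delta_a}(Fe_\chi)[\theta]\simeq\prod_\chi F[\theta]$, while $F\otimes_{\fq}\ol=\bigoplus_\chi M(\chi)$ with $M(\chi):=e_\chi(F\otimes_{\fq}\ol)$. A direct computation from the definition (the $\Delta_a$-analogue of the $\tau$-computation leading to \eqref{eqn-taugchi}) shows that $g(\chi)$ is a $\chi$-eigenvector, $\sigma\,g(\chi)=\chi(\sigma)\,g(\chi)$; combined with the orthogonality of characters this gives $e_\chi\eta_a=g(\chi)$. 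Thus $\mu\otimes_{\fq}F$ is the direct sum over $\chi$ of the $F[\theta]$-linear maps $F[\theta]\to M(\chi)$, $x\mapsto x\,g(\chi)$, and the lemma reduces to showing that $g(\chi)$ is a free generator of $M(\chi)$ over $F[\theta]$ for every $\chi$. Each $M(\chi)$ is finitely generated and torsion-free, hence free over the principal domain $F[\theta]$; since the $\#\Delta_a$ nonzero elements $g(\chi)$ occupy the $\#\Delta_a$ summands and the total rank equals $\#\Delta_a$, each $M(\chi)$ has rank one, so it only remains to see that $g(\chi)$ generates, i.e. that the index $[M(\chi):F[\theta]\,g(\chi)]$ is a constant.

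To control this index I would exploit multiplicativity. Writing $a=P_1\cdots P_r$ with the $P_i$ distinct primes, each $K_{P_i}/K$ is ramified over $A$ only at $P_i$, so the discriminants $\mathrm{disc}(\mathcal O_{K_{P_i}}/A)$ are pairwise coprime and $\ol\simeq\mathcal O_{K_{P_1}}\otimes_A\cdots\otimes_A\mathcal O_{K_{P_r}}$, compatibly with $A[\Delta_a]\simeq A[\Delta_{P_1}]\otimes_A\cdots\otimes_A A[\Delta_{P_r}]$; under these identifications $\eta_a$ corresponds to $\eta_{P_1}\otimes\cdots\otimes\eta_{P_r}$, since $g(\chi)=\prod_i g(\omega_{P_i}^{N_i})$ for $\chi=\prod_i\omega_{P_i}^{N_i}$. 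It therefore suffices to treat a single prime $P$, where $\mathcal O_{K_P}=A[\lambda_P]$ and the statement that $g(\chi)$ generates the $\chi$-eigenspace over $A(\chi)$ is Thakur's computation of Gauss-Thakur sums \cite{Tha88} (see also \cite[section 2.3]{AngPel14}): one determines the valuation of $g(\chi)$ at the unique prime of $K_P$ above $P$ and matches it with the different of $K_P/K$, forcing the index above to be a unit.

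The main obstacle is exactly this last point, namely upgrading \emph{``$g(\chi)$ lies in the $\chi$-eigenspace''} to \emph{``$g(\chi)$ generates it''}. Everything else — the descent showing $\eta_a\in\ol$, the idempotent decomposition, the rank count, and the reduction to a single prime — is formal and rests only on $\#\Delta_a=\prod_i(q^{d_i}-1)$ being prime to $p$, so that the idempotents $e_\chi$ and the character table are invertible. The genuine arithmetic input making $\eta_a$ a normal integral basis generator is the precise size of the Gauss-Thakur sums, equivalently the evaluation of $\prod_\chi g(\chi)$ against the discriminant of $\ol/A$.
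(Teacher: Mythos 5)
The paper's own ``proof'' of this lemma is nothing but the citation ``See lemma 16 of \cite{AngPel14}'', and your reconstruction follows exactly the route of that reference: the descent argument showing $\eta_a\in\ol$, the idempotent decomposition of $F\otimes_{\fq}\ol$ with $e_\chi\eta_a=g(\chi)$, the reduction to a single prime $P$ via coprimality of the discriminants of the $\mathcal O_{K_{P_i}}$, and Thakur's valuation computation from \cite{Tha88} as the sole arithmetic input upgrading ``$g(\chi)$ is a $\chi$-eigenvector'' to ``$g(\chi)$ generates the $\chi$-eigenspace''. Since you correctly isolate that generation step as the only non-formal point and source it precisely where the paper does, your proposal is correct and in substance the same argument the paper invokes.
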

\begin{proof}
 See lemma 16 of \cite{AngPel14}. 
\end{proof}

\subsubsection{The Frobenius action on the \texorpdfstring{$\chi$}{chi}-components} \label{sec-frobaction}

Recall that $L$ is the extension of $K$ generated by the $a$-torsion of the Carlitz module.
Let $\linf := L \otimes_K \kinf$ on which $\tau$ acts diagonally and $\Delta_a$ acts on $L$.
As in section \ref{sec-andmod}, we have a morphism of $A[\Delta_a]$-modules
\[ \expcn \colon \lie(\cn)(\linf) \longrightarrow \cn(\linf). \]
Let $\chi \in \widehat\Delta_a$.
We get an induced map
\[ \expcn \colon e_\chi \left(\lie(\cn)(\fq(\chi) \otimes_{\fq} \linf) \right) \longrightarrow \cn \left( e_\chi(\fq(\chi) \otimes_{\fq} \linf) \right), \]
where the action of $\tau$ on $\fq(\chi) \otimes_{\fq} \linf$ is on the second component.
But, by lemma \ref{lem-eta}, we have
\[ e_\chi(\fq(\chi) \otimes_{\fq} \linf) = g(\chi) \kinf(\chi), \]
where $\kinf(\chi) := \fq(\chi) \otimes_{\fq} \kinf$.

We have the obvious isomorphism of modules over $A(\chi) := \fq(\chi) \otimes_{\fq} A$ 
\[ g(\chi) \kinf(\chi) \stackrel{\sim}{\longrightarrow} \kinf(\chi), \]
where the action on the right hand side is denoted by $\tilde\tau$ and given by $\tilde\tau(f) = \alpha(\chi) (1 \otimes \tau) (f)$ for any $f \in \kinf(\chi)$, where $\alpha(\chi)$ is defined by equality \ref{eqn-taugchi}.
In particular, this isomorphism maps $\cnt$ into $\partial_\theta + N_1 \tilde\tau = \partial_\theta + N_{\alpha(\chi)} \tau$ with notation of section \ref{sec-nthcarlitz} and $\expcn$ into $\expachi$.
Thus, by lemma \ref{lem-eta}, we have the isomorphism of $A(\chi)$-modules
\[ e_\chi\left( \fq(\chi) \otimes_{\fq} H(\cn/ \ol) \right) \simeq \frac{E_{\alpha(\chi)}(\kinf(\chi))}{\expachi\left( \lie(E_{\alpha(\chi)})(\kinf(\chi)) \right) + E_{\alpha(\chi)}(A(\chi))}. \]
We denote the right hand side by $H\left( E_{\alpha(\chi)} / A(\chi) \right)$.
Note that we have also
\[ e_\chi \left( \fq(\chi) \otimes_{\fq} \expcni(\cn(\ol)) \right) = \expachii\left( E_{\alpha(\chi)}(A(\chi)) \right). \]

\subsubsection{\texorpdfstring{$L$}{L}-values} \label{sec-lvalues}

Let $\chi \in \widehat\Delta_a$ and denote its conductor by $f_\chi$.
Recall that the special value at $n \geq 1$ of Goss $L$-series (see \cite[chapter 8]{Gos}) associated with $\chi$ is defined by
\[ L(n,\chi) := \sum\limits_{b \in A_+} \frac{\chi(\sigma_b)}{b^n} \in \kinf(\chi), \]
where the sum runs over the elements $b \in A_+$ relatively prime to $f_\chi$.
If $b \in A_+$ and $f_\chi$ are not coprime, we set $\chi(\sigma_b) = 0$.
Then, define the Goss abelian $L$-series
\[ L(n,\Delta_a) := \sum\limits_{\chi \in \widehat\Delta_a} L(n,\chi) e_\chi \in \kinf[\Delta_a]^\times. \]

\begin{lem} \label{lem-LnDeltaprod}
 The infinite product 
 \[ \prod\limits_{\substack{P\in A \\ \text{prime}}} \frac{\left[ \lie(\cn)(\ol/P\ol) \right]_{A[\Delta_a]}}{\left[ \cn(\ol/P\ol) \right]_{A[\Delta_a]}} \]
 converges in $\kinf[\Delta_a]$ to $L(n,\Delta_a)$.
\end{lem}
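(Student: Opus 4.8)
The plan is to reduce this equivariant statement to the componentwise computation already carried out in section \ref{sec-cfPellarinL}, by projecting onto the $\chi$-components and invoking the Frobenius-twist identification of section \ref{sec-frobaction}. First I would base change to $F$ and use the decomposition $F \otimes_\fq A[\Delta_a] = \bigoplus_{\chi \in \widehat\Delta_a} e_\chi(F \otimes_\fq A[\Delta_a])$. By the very definition of $[\,\cdot\,]_{A[\Delta_a]}$ in section \ref{sec-gossL}, the monic generator of each equivariant Fitting ideal splits as $\sum_\chi (\,\cdot\,) e_\chi$; since the $e_\chi$ are orthogonal idempotents, the whole product decouples into independent products indexed by $\chi$. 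Hence it suffices to show, for each fixed $\chi \in \widehat\Delta_a$, that the $\chi$-component of the product converges in $\kinf(\chi)$ to $L(n,\chi)$.

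The heart of the reduction is section \ref{sec-frobaction}. By lemma \ref{lem-eta}, $\ol$ is free of rank one over $A[\Delta_a]$ on $\eta_a = \sum_\chi g(\chi)$, so $e_\chi(\fq(\chi)\otimes_\fq \ol/P\ol) = g(\chi)\,(A(\chi)/PA(\chi))$ with $A(\chi) := \fq(\chi)\otimes_\fq A$, and by the Gauss-Thakur relation \eqref{eqn-taugchi} the Frobenius $\tau$ acts on this line through multiplication by $\alpha(\chi)$. Consequently the $\chi$-components of $\lie(\cn)(\ol/P\ol)$ and $\cn(\ol/P\ol)$ are identified, as $A(\chi)$-modules, with $\lie(E_{\alpha(\chi)})(A(\chi)/PA(\chi))$ and $E_{\alpha(\chi)}(A(\chi)/PA(\chi))$, where $A(\chi)$ now plays the role of $R_s$. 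Propositions \ref{prop-fittinglie} and \ref{prop-fittingE} then give
\[ \frac{[\lie(E_{\alpha(\chi)})(A(\chi)/PA(\chi))]_{A(\chi)}}{[E_{\alpha(\chi)}(A(\chi)/PA(\chi))]_{A(\chi)}} = \frac{P^n}{P^n - \rho_{\alpha(\chi)}(P)}. \]
Expanding this Euler product exactly as in section \ref{sec-cfPellarinL} (convergence being furnished by theorem \ref{thm-cnf} applied to $E_{\alpha(\chi)}$) yields $\sum_{b \in A_+} \rho_{\alpha(\chi)}(b)/b^n$.

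It remains to identify $\rho_{\alpha(\chi)}(P)$ with $\chi(\sigma_P)$. Writing $\chi = \prod_i \omega_{P_i}^{N_i}$ and $\alpha(\chi) = \prod_{i,j}(\zeta_{P_i}^{q^j}-\theta)^{N_{i,j}}$ as in \eqref{eqn-taugchi}, I would use that $\rho_{\alpha(\chi)}(P)$ is the resultant in $\theta$ of $P$ and $\alpha(\chi)$, which factors as $\prod_{i,j} P(\zeta_{P_i}^{q^j})^{N_{i,j}}$ up to the monic normalization. Comparing with $\omega_{P_i}(\sigma_P) = P(\zeta_{P_i})$ and $\omega_{P_i}^{q^j}(\sigma_P) = P(\zeta_{P_i}^{q^j})$ gives $\rho_{\alpha(\chi)}(P) = \chi(\sigma_P)$ on primes, hence $\rho_{\alpha(\chi)}(b) = \chi(\sigma_b)$ for all $b \in A_+$ by multiplicativity. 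The ramified primes $P \mid f_\chi$ behave correctly: $P$ then shares a root with $\alpha(\chi)$, so $\rho_{\alpha(\chi)}(P) = 0 = \chi(\sigma_P)$, matching the convention in the definition of $L(n,\chi)$. Thus the $\chi$-component equals $\sum_{b\in A_+}\chi(\sigma_b)/b^n = L(n,\chi)$, and reassembling via $\sum_\chi L(n,\chi) e_\chi = L(n,\Delta_a)$ finishes the proof.

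I expect the main obstacles to be the first and last steps. The delicate structural point is rigorously checking that the equivariant bracket and the infinite product are compatible with base change to $F$ and projection by the $e_\chi$, so that the problem genuinely decouples across characters and each factor is governed by a single $E_{\alpha(\chi)}$. The delicate computational point is pinning down the normalization and sign in the resultant so that $\rho_{\alpha(\chi)}(P)$ equals $\chi(\sigma_P)$ exactly, rather than merely up to a root of unity in $\fq(\chi)^\times$.
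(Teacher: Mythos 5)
Your proposal is correct and follows essentially the same route as the paper's proof: the per-character Euler product $L(n,\chi)=\prod_P (1-\chi(\sigma_P)P^{-n})^{-1}$, the identification of the $\chi$-components with the twisted Anderson module $E_{\alpha(\chi)}$ via Gauss--Thakur sums as in section \ref{sec-frobaction}, the computation $\left[E_{\alpha(\chi)}(A(\chi)/PA(\chi))\right]_{A(\chi)} = P^n-\chi(\sigma_P)$, and reassembly through the idempotents $e_\chi$. The one point you cite slightly loosely --- Propositions \ref{prop-fittinglie} and \ref{prop-fittingE} are stated over $R_s=\fq(t_1,\dots,t_s)[\theta]$ whereas here the base is $A(\chi)=\fq(\chi)[\theta]$ --- is handled identically in the paper (``as in section \ref{sec-fittingE}, we can prove''), since those proofs only use that $\tau$ fixes the coefficient field, so they transfer verbatim; your resultant identification $\rho_{\alpha(\chi)}(P)=\chi(\sigma_P)$, including the vanishing at primes dividing $f_\chi$, matches the paper's displayed computation exactly.
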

\begin{proof}
 On the one hand, for all $\chi \in \widehat\Delta_a$, we have
 \[ L(n,\chi) = \prod\limits_{\substack{P\in A \\ \text{prime}}} \left( 1 - \frac{\chi(\sigma_P)}{P^n} \right)^{-1}, \]
 where $\chi(\sigma_P) = 0$ if $P$ divides $f_\chi$.
 On the other hand, let $\chi \in \widehat\Delta_a$.
 We write $\chi = \omega_{P_1}^{N_1} \cdots \omega_{P_r}^{N_r}$ as in equality \eqref{eqn-decompochi} and denote by $N_{i,j}$ the $q$-adic digits of $N_i$.
 Then, as in section \ref{sec-fittingE}, we can prove that
 \[ \begin{array}{rcl} \left[ E_{\alpha(\chi)}(A(\chi)/PA(\chi)) \right]_{A(\chi)} 
     & = & \displaystyle P^n - \prod\limits_{i = 1}^r \prod\limits_{j = 0}^{d_i - 1} P\left( \zeta_{P_i}^{q^j} \right)^{N_{i,j}} \\
     & = & \displaystyle P^n - \prod\limits_{i = 1}^r P(\zeta_{P_i})^{N_i} \\
     & = & P^n - \chi(\sigma_P).
    \end{array} \]
 Thus, we obtain
 \[ L(n,\chi) = \prod\limits_{\substack{P\in A \\ \text{prime}}} \frac{ \left[ \lie(E_{\alpha(\chi)})(A(\chi)/PA(\chi)) \right]_{A(\chi)} }{ \left[ E_{\alpha(\chi)}(A(\chi)/PA(\chi)) \right]_{A(\chi)} } \]
 Hence, we get the result by the discussion of section \ref{sec-frobaction} and definition of $L(n,\Delta_a)$. 
\end{proof}

Finally, we obtain a generalization of theorem A of \cite{AngTae12}:

\begin{thm} \label{thm-fcequi}
 Let $a \in A_+$ be squarefree and denote by $L$ the extension of $K$ generated by the $a$-torsion of the Carlitz module.
 In $\kinf[\Delta_a]$, we have
 \[ L(n,\Delta_a) = \left[ \lie(\cn)(\ol) : \expcni(\cn(\ol)) \right]_{A[\Delta_a]} \left[ H(\cn / \ol) \right]_{A[\Delta_a]}. \]
\end{thm}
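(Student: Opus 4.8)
The strategy is to reduce the equivariant statement over $A[\Delta_a]$ to the class formula of Theorem \ref{thm-cnf}, applied character by character. The group ring $A[\Delta_a]$ splits as a product $\prod_i F_i[\theta]$, and after tensoring with $F$ it decomposes into $\chi$-components via the idempotents $e_\chi$. So the plan is to project the desired equality onto each $e_\chi$-component, verify it there using the non-equivariant theory, and then reassemble the components using the additive formula
\[ [M_1 : M_2]_{A[\Delta_a]} = \sum_{\chi \in \widehat\Delta_a} [M_1(\chi) : M_2(\chi)]_{A(\chi)} e_\chi \]
together with the analogous multiplicative decomposition of Fitting ideals.

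First I would fix $\chi \in \widehat\Delta_a$ and pass to the $\chi$-component. By the discussion of section \ref{sec-frobaction}, the isomorphism $g(\chi)\kinf(\chi) \stackrel{\sim}{\to} \kinf(\chi)$ (coming from Lemma \ref{lem-eta}, which exhibits $\ol$ as a free $A[\Delta_a]$-module on $\eta_a = \sum_\chi g(\chi)$) transports the Carlitz $\cnt$-action into the Anderson module $E_{\alpha(\chi)}$ defined in section \ref{sec-nthcarlitz}, where $\alpha(\chi)$ is the element given by equality \eqref{eqn-taugchi}. Under this identification, $\expcn$ becomes $\expachi$, the class module $e_\chi(F \otimes_\fq H(\cn/\ol))$ becomes $H(E_{\alpha(\chi)}/A(\chi))$, and the lattice $e_\chi(F \otimes_\fq \expcni(\cn(\ol)))$ becomes $\expachii(E_{\alpha(\chi)}(A(\chi)))$. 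Thus the $\chi$-component of the claimed identity is precisely
\[ L(n,\chi) = \left[ \lie(E_{\alpha(\chi)})(A(\chi)) : \expachii(E_{\alpha(\chi)}(A(\chi))) \right]_{A(\chi)} \left[ H(E_{\alpha(\chi)}/A(\chi)) \right]_{A(\chi)}, \]
which is exactly the content of Theorem \ref{thm-cnf} applied to the Anderson module $E_{\alpha(\chi)}$ over $A(\chi) = \fq(\chi)\otimes_\fq A$, once one knows the $L$-value matches. That matching is supplied by Lemma \ref{lem-LnDeltaprod}, which computes the relevant Euler product $\prod_P [\lie(E_{\alpha(\chi)})(A(\chi)/PA(\chi))]/[E_{\alpha(\chi)}(A(\chi)/PA(\chi))]$ and identifies it with $L(n,\chi)$.

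Having established the identity in each $\chi$-component, I would reassemble: summing $L(n,\chi)e_\chi$ over $\chi$ gives $L(n,\Delta_a)$ by definition, while the right-hand side reassembles by the additivity of $[\,\cdot:\cdot\,]_{A[\Delta_a]}$ over components and the compatibility of Fitting ideals of $A[\Delta_a]$-modules with the idempotent decomposition $F \otimes_\fq M = \bigoplus_\chi e_\chi(F \otimes_\fq M)$. The two factors on the right, the regulator and the class module Fitting ideal, are each defined componentwise, so their $e_\chi$-parts are exactly the regulator and class-module contributions for $E_{\alpha(\chi)}$.

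The main obstacle is not any single deep estimate—all the analytic work (convergence of the exponential, the trace formula, the det-as-Fitting-ideal computations) is already packaged in Theorem \ref{thm-cnf} and its proof. Rather, the delicate point is the bookkeeping that makes the character-by-character reduction legitimate: one must check that the descent from $A[\Delta_a]$ to the factors $F_i[\theta]$, the tensoring up to $F$, and the projection by $e_\chi$ are all compatible with the formation of $\lie$, of the exponential map, of the class module, and of Fitting ideals and co-volume indices. In particular one needs the field of definition issue to be harmless—the individual $E_{\alpha(\chi)}$ live over $\fq(\chi)$ rather than $\fq$, but the quantities $L(n,\chi)$, the regulator and $[H]$ all take values in $\kinf(\chi)$, and the Galois-conjugate characters group together so that the final sum lands in $\kinf[\Delta_a]$ rather than merely $F\otimes_\fq\kinf[\Delta_a]$. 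Verifying this rationality, so that \eqref{eqn-taugchi} and Lemma \ref{lem-eta} genuinely descend the equality to the group ring over $A$, is where the care is required.
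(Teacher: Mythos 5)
Your overall architecture --- express $L(n,\Delta_a)$ through Lemma \ref{lem-LnDeltaprod}, transport each $\chi$-component to the twisted module $E_{\alpha(\chi)}$ via the Gauss--Thakur sums of section \ref{sec-frobaction}, prove the class formula component-wise, and reassemble using the idempotent decomposition and principality of $A[\Delta_a]$ --- is the same as the paper's. But your central step has a genuine gap: the $\chi$-component identity is \emph{not} ``exactly the content of Theorem \ref{thm-cnf} applied to the Anderson module $E_{\alpha(\chi)}$ over $A(\chi)$'', because $E_{\alpha(\chi)}$ is not an Anderson module over any $\rls$ in the sense of Definition \ref{defi-andersonmodule}. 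That theorem requires the coefficient field to be $k_s=\fq(t_1,\dots,t_s)$ with $t_1,\dots,t_s$ \emph{transcendental} over $\cinf$, and the operator $\tau$ to be the $q$-power map on $\cinf$ fixing $k_s$; the whole framework (Gauss valuation, $\csinf$, Lemma \ref{lem-csinftau1}) relies on this. By contrast, $E_{\alpha(\chi)}$ has coefficients in the \emph{finite} extension $\fq(\chi)$ of $\fq$, and its Frobenius is the partial operator $1\otimes\tau$ on $\kinf(\chi)=\fq(\chi)\otimes_{\fq}\kinf$, which is trivial on $\fq(\chi)$. Nor is it the $s=0$ case with $L=\fq(\chi)\cdot K$: an Anderson module over $R_{L,0}=\ol=\fq(\chi)[\theta]$ would be an $A$-module structure built from the full $q$-power Frobenius (satisfying $\tau\zeta=\zeta^q\tau$ for $\zeta\in\fq(\chi)$), whereas $\phi_\theta=\partial_\theta+N_{\alpha(\chi)}(1\otimes\tau)$ is an $A(\chi)$-module structure with $(1\otimes\tau)\zeta=\zeta(1\otimes\tau)$. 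One also cannot rescue the step by specializing Theorem \ref{thm-cnf} at $t_i\mapsto\zeta_{P_i}$: the class formula is an identity in $\ksinf$ and does not evaluate at algebraic points for free.

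What fills this hole in the paper is a re-run of the \emph{proof}, not the statement, of Theorem \ref{thm-cnf}: one redoes the trace formula of Proposition \ref{prop-traceformula} and the co-volume argument of section \ref{sec-endproof} while working with the components $e_\chi(\fq(\chi)\otimes_{\fq}\ol)$, using that $A[\Delta_a]=\prod_i F_i[\theta]$ is principal, following \cite[paragraph 6.4]{AngTae12}. The machinery of sections \ref{sec-nuclearopdet}--\ref{sec-ratio} does extend to coefficient fields $\fq(\chi)$ with the partial Frobenius, but verifying that extension is the actual mathematical content of the theorem; it cannot be obtained by citing Theorem \ref{thm-cnf} as a black box. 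Your closing paragraph senses a difficulty here but misdiagnoses it as rationality/descent bookkeeping (the reassembly over $\kinf[\Delta_a]$ is indeed harmless), rather than as the substantive point that the component objects lie outside the scope of the theorem you invoke.
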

\begin{proof}
 By the previous lemma, $L(n,\Delta_a)$ is expressed in terms of Anderson module and Fitting.
 Then, as in proposition \ref{prop-traceformula}, we express $L(n,\Delta_a)$ as a determinant.
 The proof is similar but we deal with the $\chi$-components $e_\chi(\fq(\chi) \otimes_{\fq} \ol)$ for all $\chi \in \widehat\Delta_a$.
 Then, since $A[\Delta_a]$ is principal, we conclude as in section \ref{sec-endproof}.
 We refer to \cite[paragraph 6.4]{AngTae12} for more details.
 \end{proof}

\bigskip

\bibliographystyle{amsplain}
\bibliography{biblio}
\addcontentsline{toc}{chapter}{References}

\bigskip

\noindent LMNO, CNRS UMR 6139, Universit\'e de Caen, 14032 Caen cedex, France
\newline \noindent E-mail address : \nolinkurl{florent.demeslay@unicaen.fr}

\end{document}